\newtheorem{theorem}{Theorem}[section]
\newtheorem{lemma}[theorem]{Lemma}
\newtheorem{corollary}[theorem]{Corollary}
\theoremstyle{definition}
\newtheorem{example}[theorem]{Example}
\theoremstyle{remark}
\newtheorem{remark}[theorem]{\bf Remark}
\numberwithin{equation}{section}
\def\q{\quad}
\def\Qbar{\text{\sl Q\kern-.45em{\vrule height.63em width.05em
depth-.033em}}~}
\def\qbar{{{\scriptstyle Q}\kern-.45em{\vrule height.41em width.035em
depth-.03em}}~}
\def\Cbar{\text{\sl C\kern-.35em{\vrule height.63em width.05em
depth-.033em}}~}
\def\cbar{{{\scriptstyle C}\kern-.41em{\vrule height.42em width.035em
depth-.03em}}~}
\def\ibid{\hbox to .5truein{\hrulefill}}
\def\IH{\text{{\rm I}\kern-.13em{\rm H}}}
\def\IR{{\Bbb R}}
\def\IT{{\Bbb T}}
\def\pd#1#2{\frac{\partial#1}{\partial#2}}
\def\twoheaddown{\downarrow\kern-0.78em\raise0.25em\hbox{$\downarrow$}}
\def\headtaildown{\downarrow\kern-0.79em\raise 0.5em\hbox{$\ssize\curlyvee$}}
\def\vsk{\vskip.5cm}
\def\noi{\noindent}
\def\pr{\prime}
\def\la{\langle}
\def\ra{\rangle}
\def\wt{\widetilde}
\def\wh{\widehat}
\def\os{\overset}
\def\us{\underset}
\def\CA{{\cal A}}
\def\SD{{\cal D}}
\def\SM{{\cal M}}
\def\CS{{\cal S}}
\def\spa{\text{\rm span}}
\def\supp{\text{\rm supp}}
\def\elra{\hbox to 2in{\rightarrowfill}}
\def\ella{\hbox to 2in{\leftarrowfill}}
\def\hrf{\hbox to 2in{\hrulefill}}
\def\hdotfill{\leaders\hbox to 1em{\hss .\hss}\hfill}
\def\balpha{{\pmb \alpha  }}
\def\bal{{\pmb \alpha  }}
\def\bx{{\pmb x}}
\def\by{{\pmb y}}
\def\TT{{\widetilde T}}
\begin{document}

\title{Convexity, moduli of smoothness and a Jackson-type inequality}

\author{Z. Ditzian and A. Prymak}

\date{} \maketitle

\begin{abstract}
For a Banach space $B$ of functions which satisfies for some $m>0$
$$
\max(\Vert  F+G\Vert  _B,\Vert  F-G\Vert  _B) \ge (\Vert  F\Vert
^s_B + m\Vert  G\Vert  ^s_B)^{1/s}, \q \forall \;F,G\in B \leqno(*)
$$
a significant improvement for lower estimates of the moduli of
smoothness
$\omega  ^r(f,t)_B$ is
achieved.  As a result of these estimates, sharp Jackson
inequalities which are superior to the classical Jackson type
inequality
are derived.  Our investigation covers Banach spaces of
functions on $\IR^d$ or $\IT^d$ for which translations are isometries or
on $S^{d-1}$ for which rotations are isometries.  Results for $C_0$
semigroups of contractions are derived.  As applications of the
technique used in this paper, many new theorems are deduced.  An
$L_p$ space with $1<p<\infty  $ satisfies $(*)$ where $s=\max(p,2),$ and
many Orlicz spaces are shown to satisfy $(*)$ with appropriate $s.$

\vsk
\noi
\begin{tabular}{ll}
\text{\bf Key words and phrases:} &\q\text{\rm Moduli of smoothness, Jackson
inequality,}\\
&\q $L_p $\;\text{\rm spaces, Orlicz spaces.}\\
\\
\text{\bf AMS subject classification:} &\q 41A63, 41A17, 41A25,
46B20, 47D60
\end{tabular}

\end{abstract}

\baselineskip20pt
\mathsurround=3pt

\parindent=25pt

\vsk
\section{Introduction}\label{Sec1}

For a Banach space $B$ of functions on $\IR^d$ or $\IT^d$ for which
translations are continuous isometries and whose norm satisfies for some $1<q\le
2$ and some $M\ge 1$
\begin{equation}\label{Eq1.1}
\frac 12\;\Vert  {F+G}\Vert_B   +\,\frac 12\;\Vert
{F-G}\Vert  _B\le (\Vert  F\Vert  ^q_B + M\Vert
G\Vert  ^q_B)^{1/q}, \; \forall \, F, G\in B,
\end{equation}
the first author (see \cite{Di88}) derived a sharp version of the
Marchaud inequality i.e. an estimate of the $r\text{\rm -th}$
modulus of smoothness
$\omega  ^r(f,t)_B$ (see (\ref{Eq1.5}) below) by an
expression involving $\omega  ^{r+1}(f,t)_B\,,$ which implies a sharper
version of the converse inequality (see also \cite{To}). Analogous
results were achieved for functions on the sphere (see \cite{Di99}).
In the other direction, a sharp Jackson inequality and a sharp lower
estimate of $\omega  ^r(f,t)_{L_p}$ for $1<p<\infty  $ were given
in \cite{Da-Di-Ti} using a version of the Littlewood-Paley
inequality.  Here, we will use the following dual inequality to
(\ref{Eq1.1}), given by
\begin{equation}\label{Eq1.2}
\max \,(\Vert  F+G\Vert  _B,\Vert F  -G\Vert  _B) \ge (\Vert  F\Vert
^s_B + m\Vert  G\Vert  ^s_B)^{1/s},\; \forall\, F,G\in B
\end{equation}
for some $2\le s<\infty  $ and $m>0,$ to obtain the sharp Jackson
inequality and the lower estimate of $\omega  ^r(f,t)_B\,.$
This includes the result for $L_p\,,$\, $1<p<\infty  ,$ since for
$B=L_p$ when $1<p<\infty  , $ (\ref{Eq1.2}) is satisfied with
$s=\max\,(2,p).$  An important portion of the paper will be
dedicated to the lower estimate of $\us{0<u\le t}\sup\,\big\Vert
\big(T(u)-I\big)^rf\big\Vert  _B,$ where $T(u)$ is a $C_0$ semigroup
of contractions, and to applications of the lower estimate in
approximation theory.  An example of such an application is the
sharp Jackson inequality for polynomial approximation on a simplex
with Jacobi weights using the $L_p$ norm where $1<p<\infty  $ or some
other Orlicz norm which satisfies (\ref{Eq1.2}).

The condition (\ref{Eq1.2}) depends on the particular norm of $B$
and may not be satisfied by an equivalent norm of $B.$  For our
results we will need a norm on $B$ which satisfies simultaneously
(\ref{Eq1.2}) and the condition that $T(u)$ is a contraction on $B$
or that translation by $\xi$ is a contraction or an isometry on $B,$
which also is not inherited by an equivalent norm.  However, for the
conclusion of our results any equivalent norm of $B$ will do. In
short, we need the condition that $B$ possesses a norm
 for which $T(u)$ are contractions and which simultaneously
satisfies (\ref{Eq1.2}); however,
the results are valid for any equivalent norm on $B.$

The following theorem is perhaps typical of the results achieved in
the present paper.

\begin{theorem}
Suppose $B$ is a Banach space of functions on $\IR^d$ or $\IT^d$ with a
norm
satisfying {\rm (\ref{Eq1.2})} for some $s,$ \, $2\le s<\infty  $ and
\begin{equation}\label{Eq1.3}
\Vert  f(\cdot\, + \xi  )\Vert  _B = \Vert  f(\cdot)\Vert  _B,\q
\us{\vert  h\vert \to 0 }\lim\, \Vert  f(\cdot\, + h) -
f(\cdot)\Vert  _B = 0, \q \Vert  f(\,-\,\cdot\,)\Vert  _B= \Vert
f(\,\cdot\,)\Vert  _B
\end{equation}
for any $f\in B$ and  $\xi  ,\, h\in \IR^d.$
Then for $C$ independent of $f,t$ and $n$
\begin{equation}\label{Eq1.4}
2^{-nr} \Big\{\sum^n_{j=1} 2^{jrs}\omega  ^{r+1}
(f,2^{-j})^s_B\Big\}^{1/s} \le C\omega  ^r(f,2^{-n})_B
\end{equation}
where
\begin{equation}\label{Eq1.5}
\omega  ^r(f,t)_B =\us{\vert  h\vert  \le t}\sup\,\Vert  \Delta
^r_hf\Vert  _B \,,\,\Delta  _hf(x) = f(x+h)-f(x) \q\text{\rm and} \q
\Delta  ^{\ell+1}_h f=\Delta  _h(\Delta  ^\ell_h f).
\end{equation}
\end{theorem}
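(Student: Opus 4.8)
The plan is to exploit the dual convexity condition (\ref{Eq1.2}) via a Littlewood--Paley-type decomposition of $\Delta^r_h f$ associated with a dyadic frequency partition, then to iterate (\ref{Eq1.2}) along the dyadic blocks. First I would fix a $C^\infty$ partition of unity $\{\varphi_j\}_{j\ge 0}$ on the frequency side, supported in dyadic annuli $\{|\xi|\sim 2^j\}$, and write $f_j$ for the corresponding Fourier (or de la Vall\'ee Poussin--type) multiplier pieces of $f$, so that $f=\sum_j f_j$ with convergence in $B$; the hypotheses (\ref{Eq1.3}) that translations are isometries and continuous, together with a multiplier/transference argument, guarantee that these projections are bounded operators on $B$ with norms independent of $j$. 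The key point is that, because (\ref{Eq1.2}) has the ``$\ge$'' direction, it gives a \emph{lower} bound: from $\max(\|F+G\|_B,\|F-G\|_B)\ge(\|F\|_B^s+m\|G\|_B^s)^{1/s}$ one can separate a ``high-frequency tail'' from the rest and conclude that a single $\omega^r$ on the left controls an $\ell^s$-sum of the finer information on the right.

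The main estimate I would establish is a Bernstein-type two-sided comparison on each dyadic block: for a function $g_j$ with frequencies in $\{|\xi|\sim 2^j\}$ one has $\omega^{r+1}(g_j,t)_B \sim \min(1,(2^j t)^{r+1})\|g_j\|_B$ and, crucially, that for an appropriate choice of increment $h$ with $|h|\sim 2^{-j}$ the difference operator $\Delta^r_h$ is ``invertible up to a constant'' on that block, so that $\|g_j\|_B \le C\, 2^{jr}\,\omega^{r+1}(g_j,2^{-j})_B$ and conversely. Then, telescoping $\Delta^r_h f = \Delta^r_h\big(\sum_{j\le N} f_j\big) + (\text{tail})$ and applying (\ref{Eq1.2}) repeatedly to peel off one block at a time — at each stage writing the partial sum as $F+G$ and $F-G$ with $G$ the newest block, using that the translation structure makes $\|F\pm G\|_B$ comparable to $\omega^r$ evaluated at the relevant scale — produces the inequality
\begin{equation*}
\Big(\sum_{j=1}^n 2^{jrs}\,\omega^{r+1}(f,2^{-j})_B^s\Big)^{1/s} \le C\, 2^{nr}\,\omega^r(f,2^{-n})_B,
\end{equation*}
which is exactly (\ref{Eq1.4}) after dividing by $2^{nr}$. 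The constant $C$ is independent of $f$, $t$ and $n$ because it only involves the multiplier bounds and the fixed constants $s$ and $m$ from (\ref{Eq1.2}).

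The step I expect to be the main obstacle is making the iteration of (\ref{Eq1.2}) rigorous when the ``components'' $F$ and $G$ are not literally $\Delta^r_h$ of a single dyadic piece but rather $\Delta^r_h$ applied to a \emph{sum} of blocks against a remainder, since (\ref{Eq1.2}) must be applied with the right pairing at each scale and one needs the cross terms (the interaction of $\Delta^r_h$ with blocks of frequency away from $2^{-1/|h|}$) to be absorbed. This requires a careful choice of the increments $h=h_j$ and of auxiliary smooth multipliers that are identically $1$ on the block $j$ and supported slightly wider, so that on the one hand $\|G\|_B$ at stage $j$ is comparable to $2^{-jr}\|f_j\|_B$ up to harmless errors, and on the other hand the running sum $\|F\|_B$ at stage $j$ stays bounded by $C\,\omega^r(f,2^{-n})_B$ uniformly. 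Once this bookkeeping is set up, the remaining computations (Bernstein and Jackson estimates on single dyadic blocks, and summing a geometric-type series) are routine, and the theorem follows.
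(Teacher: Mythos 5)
Your strategy---a Littlewood--Paley dyadic frequency decomposition, Bernstein comparisons on each block, and an iteration of (\ref{Eq1.2}) that peels off one block at a time---is essentially the route taken in \cite{Da-Di-Ti} for $L_p(\IR^d)$, $1<p<\infty$, and it has a gap that the present paper was written precisely to circumvent. The hypotheses (\ref{Eq1.2}) and (\ref{Eq1.3}) are much weaker than what the Littlewood--Paley programme needs: from (\ref{Eq1.3}) you do get (by convolving with the $L_1$ kernels of the dyadic multipliers) that each projection $f\mapsto f_j$ is bounded on $B$ uniformly in $j$, but you do \emph{not} get the square-function/unconditionality comparison between $\|f\|_B$ and an $\ell^s$-sum of block norms that your iteration tacitly relies on; that kind of unconditionality is an additional (UMD-lattice type) hypothesis that does not follow from (\ref{Eq1.2}). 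More locally, when you iterate (\ref{Eq1.2}) with ``$F$ the running partial sum and $G$ the newest block,'' you must resolve $\max(\|F+G\|_B,\|F-G\|_B)$, and nothing in your setup forces the maximum to land on the side you can control; for general $B$ it can easily be $\|F-G\|_B$ and the iteration jams there.

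The paper's proof avoids frequency decompositions entirely, and it resolves the $\max$ issue by a structural trick. The engine is Theorem~\ref{Thm2.1}: for \emph{any} linear contraction $T$ on $B$ satisfying (\ref{Eq1.2}), apply (\ref{Eq1.2}) with $F=\tfrac12(\TT^2-I)\varphi$, $G=-\tfrac12(\TT-I)^2\varphi$, $\varphi=(\TT-I)^{r-1}f$, so that $F+G=(\TT-I)\varphi$ and $F-G=\TT(\TT-I)\varphi$; since $\TT$ is a contraction, $\max(\|F+G\|_B,\|F-G\|_B)=\|F+G\|_B$ automatically. Iterating with $\TT=T,T^2,T^4,\dots$ yields
\begin{equation*}
\Vert(T-I)^rf\Vert_B\ge m_1\Big(\sum_{j\ge0}2^{-jrs}\Vert(T^{2^j}-I)^{r+1}f\Vert_B^s\Big)^{1/s},
\end{equation*}
with no reference to the spectrum of $f$. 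One then specializes $T$ to translation by $u$, averages over $|u|=t$ to pass to the spherical means $V_{\ell,t}$, uses the realization $\Vert V_{\ell,t}f-f\Vert_B\approx K_{\Delta^\ell}(f,t^{2\ell})_B$ of Theorem~\ref{Thm8.1}, and closes the loop with the Jackson and weak-converse bookkeeping of Theorems~\ref{Thm8.2}--\ref{Thm8.4}. I suggest you rebuild your argument around the iteration in Theorem~\ref{Thm2.1}, since a dyadic-block decomposition simply is not supported by the stated hypotheses on $B$.
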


The inequality (\ref{Eq1.4}) is sharper than the classical $\omega
^{r+1}(f,t)_B \le 2\omega  ^r(f,t)_B$ and is shown in
\cite[Section~10]{Da-Di-Ti}
to be optimal for $L_p\,,$ \, $1<p<\infty  .$

Throughout this paper constants will be positive and may depend on
the space
\newline $(B,C(\IR^d),L_p(\IR^d)$ etc.) and on $r$ but will be
valid for all the elements of the space and will be independent of
$t,n,j$ and $\ell.$  Furthermore, unless otherwise specified, when a
condition, result, or estimate is given in a theorem, definition, or
remark concerning functions in some space, it applies to all the
functions in that space.

\section{The basic inequality}\label{Sec2}

In this section we derive the basic inequality used throughout this
paper.

\begin{theorem}\label{Thm2.1}
Suppose that $B$ is a Banach space of functions, that $T:B\to B$ is a
linear contraction operator,  that is $\Vert  Tf\Vert  _B\le \Vert
f\Vert  _B$ and suppose also that {\rm (\ref{Eq1.2})} is satisfied
 with a given $s,$ \, $2\le s<\infty  $ and $m>0.$  Then
for some $m_1>0$
\begin{equation}\label{Eq2.1}
\Vert  \Delta  ^r_Tf\Vert  _B\ge m_1\Big(\sum^\infty_{j=0}
2^{-jrs}\Vert  \Delta  ^{r+1}_{T^{2^j}} f\Vert  ^s_B\Big)^{1/s}
\end{equation}
where
\begin{equation}\label{Eq2.2}
\Delta  _{T^{2^\ell}}f = T^{2^\ell}f-f \q \text{\rm and}\q \Delta
^{k+1}_{T^{2^\ell}} f = \Delta  _{T^{2^\ell}} \big(\Delta
^k_{T^{2^\ell}} f\big).
\end{equation}
\end{theorem}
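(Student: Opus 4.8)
The plan is to iterate the condition (\ref{Eq1.2}) along the dyadic tower $T, T^2, T^4, \dots$, exploiting the algebraic identity that links a finite difference of order $r$ at step $u$ with a finite difference of order $r$ at step $2u$ and a finite difference of order $r+1$ at step $u$. Concretely, writing $\Delta_{T^{2u}}^r = (T^{2u}-I)^r$ and using $T^{2u}-I = (T^u-I)(T^u+I) = (T^u-I)^2 + 2(T^u-I)$, one obtains an identity of the form
\begin{equation}\label{Eq2.3pp}
\Delta^r_{T^{2^{\ell+1}}} f = \sum_{k=0}^r \binom{r}{k} 2^{r-k}\, T^{?}\,\Delta^{r+k}_{T^{2^\ell}} f,
\end{equation}
where the exact powers of $T$ in each term are irrelevant because $T$ is a contraction. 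The key point is that the $k=0$ term is $2^r \Delta^r_{T^{2^\ell}} f$ and all remaining terms involve $\Delta^{r+1}_{T^{2^\ell}} f$ (since $\Delta^{r+j}_{T^{2^\ell}}f = \Delta^{r-1}_{T^{2^\ell}}\Delta^{j+1}_{T^{2^\ell}}f$ for $j\ge 1$, which can be bounded in norm by a constant times $\|\Delta^{r+1}_{T^{2^\ell}}f\|_B$ using $\|\Delta_{T^{2^\ell}}g\|_B \le 2\|g\|_B$). Thus we may write, for suitable $F$ and $G$ built from these differences,
$$
2^r \Delta^r_{T^{2^\ell}} f = F + G, \qquad \text{where } \Delta^r_{T^{2^{\ell+1}}} f \text{ and } \Delta^{r+1}_{T^{2^\ell}} f \text{ are recovered as } F-G \text{ and a piece of } F,G,
$$
so that applying (\ref{Eq1.2}) to this pair yields $2^r\|\Delta^r_{T^{2^\ell}}f\|_B \ge c\big(\|\Delta^r_{T^{2^{\ell+1}}}f\|_B^s + m'\|\Delta^{r+1}_{T^{2^\ell}}f\|_B^s\big)^{1/s}$ for some constant $c$ of order $1$ and some $m'>0$. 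Raising to the $s$-th power gives the one-step recursion
$$
a_\ell^s \ge \frac{c^s}{2^{rs}} a_{\ell+1}^s + \frac{c^s m'}{2^{rs}} b_\ell^s, \qquad a_\ell := \|\Delta^r_{T^{2^\ell}}f\|_B,\quad b_\ell := \|\Delta^{r+1}_{T^{2^\ell}}f\|_B.
$$

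Next I would unroll this recursion. To make the telescoping clean, set $A_\ell := 2^{-\ell r s} a_\ell^s$ (or more precisely multiply through by an appropriate geometric factor so the $a$-terms telescope). If $c^s/2^{rs} \le 1$, the inequality $a_\ell^s \ge (c/2^r)^s a_{\ell+1}^s + (cm'^{1/s}/2^r)^s b_\ell^s$ can be rescaled: dividing by $(c/2^r)^{s\ell}$ and summing over $\ell = 0, 1, \dots, N-1$ produces a telescoping sum in the $a$-terms and leaves $\sum_{\ell} (\text{const})^\ell \cdot 2^{-?}\, b_\ell^s$ on the right. The precise bookkeeping has to be arranged so that after summation one gets exactly
$$
a_0^s \;\ge\; m_1^s \sum_{j=0}^\infty 2^{-jrs} b_j^s \;-\; (\text{nonnegative tail}),
$$
where the tail $\to 0$ because $b_j = \|\Delta^{r+1}_{T^{2^j}}f\|_B \le 2^{r+1}\|f\|_B$ is bounded while its coefficient decays — so in the limit $N\to\infty$ the tail drops out, giving (\ref{Eq2.1}). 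One has to check that the constant emerging from the geometric sum, $m_1^s = c^s m' \sum_{\ell\ge 0}(\text{ratio})^\ell \cdot(\text{powers of }2)$, is strictly positive and finite; this is where the precise value of $c$ relative to $2^r$ matters, and if $c$ is not small enough one may need to first apply (\ref{Eq1.2}) with a slightly weakened but still positive $m$ (replacing $F,G$ by scaled copies) to force the ratio below $1$.

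The main obstacle I anticipate is the combinatorial/algebraic step (\ref{Eq2.3pp}): getting a clean decomposition $2^r\Delta^r_{T^{2^\ell}}f = F+G$ in which $F-G$ controls $\Delta^r_{T^{2^{\ell+1}}}f$ from below while $\|G\|_B$ (or $\|F\|_B$, whichever appears in the $m$-term of (\ref{Eq1.2})) controls $\|\Delta^{r+1}_{T^{2^\ell}}f\|_B$ from below — all simultaneously, with constants that don't degrade. The binomial identity for $(T^{2^\ell}-I)^r$ in terms of powers of $(T^{2^\ell}-I)$ is standard, but arranging the grouping into exactly two pieces $F$ and $G$ so that (\ref{Eq1.2}) applies cleanly, and then verifying that the induced recursion has a summable solution with positive limiting constant $m_1$, requires care. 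Once that recursion is in hand, the passage to (\ref{Eq2.1}) is a routine geometric-series argument together with the trivial bound $\|\Delta^{r+1}_{T^{2^j}}f\|_B \le 2^{r+1}\|f\|_B$ to kill the tail.
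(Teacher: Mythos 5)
Your high-level plan — iterate (\ref{Eq1.2}) along the dyadic tower $T,T^2,T^4,\dots$, turning a one-step recursion into a weighted $\ell^s$ sum — is exactly the paper's strategy, and the one-step recursion you want,
$a_\ell^s \ge 2^{-rs} a_{\ell+1}^s + \mathrm{const}\cdot b_\ell^s$ with $a_\ell=\Vert\Delta^r_{T^{2^\ell}}f\Vert_B$, $b_\ell=\Vert\Delta^{r+1}_{T^{2^\ell}}f\Vert_B$, is precisely the paper's inequality (\ref{Eq2.4}). However, there is a genuine gap at the heart of the argument: you never produce the pair $(F,G)$ to which (\ref{Eq1.2}) applies, and the way you grope toward it ($F+G = 2^r\Delta^r_{T^{2^\ell}}f$ with $F-G$ and ``a piece of $F,G$'' recovering $\Delta^r_{T^{2^{\ell+1}}}f$ and $\Delta^{r+1}_{T^{2^\ell}}f$) does not line up with the structure of (\ref{Eq1.2}), whose left side involves $\Vert F+G\Vert_B$, $\Vert F-G\Vert_B$ and whose right side involves $\Vert F\Vert_B$, $\Vert G\Vert_B$. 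The binomial expansion you start from, $(T^{2u}-I)^r=\sum_{k}\binom{r}{k}2^{r-k}(T^u-I)^{r+k}$ (incidentally, it is exact — no stray $T^{?}$ factors appear), has $r+1$ terms, and there is no natural way to collapse it into two pieces $F$, $G$ with the right sign and size relationships. This is why you correctly flag the grouping as the ``main obstacle''; it is the step you are missing.

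The paper's choice is much more economical: with $\TT$ one of $T,T^2,\dots,T^{2^\ell}$ and $\varphi=(\TT-I)^{r-1}f$, set
\[
F=\tfrac12(\TT^2-I)\varphi,\qquad G=-\tfrac12(\TT-I)^2\varphi,
\]
so that $F+G=(\TT-I)\varphi=(\TT-I)^rf$ and $F-G=\TT(\TT-I)\varphi$. Here contractivity of $\TT$ is used to conclude $\Vert F-G\Vert_B\le\Vert F+G\Vert_B$, so the $\max$ in (\ref{Eq1.2}) equals $\Vert(\TT-I)^rf\Vert_B$; this application of contractivity is essential and is not the one you describe (you only use $\Vert\Delta_{T^{2^\ell}}g\Vert_B\le2\Vert g\Vert_B$ to tidy high-order terms). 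Note that $F$ is the \emph{mixed} difference $\tfrac12(\TT^2-I)(\TT-I)^{r-1}f$, not $\tfrac12(\TT^2-I)^rf$; passing to the pure higher-scale difference is a separate step, via $\Vert(\TT^2-I)^rf\Vert_B=\Vert(\TT+I)^{r-1}(\TT^2-I)(\TT-I)^{r-1}f\Vert_B\le 2^{r-1}\Vert(\TT^2-I)(\TT-I)^{r-1}f\Vert_B$. Only after this do the powers of $2$ organize themselves so that the coefficient on $a_{\ell+1}^s$ is \emph{exactly} $2^{-rs}$, which is what makes the weights $2^{-jrs}$ in (\ref{Eq2.1}) come out right; your worry that ``$c$ of order $1$'' might need patching is exactly the symptom of not having this decomposition (and the proposed fix — weakening $m$ — would shrink the $b_\ell$-coefficient, not raise the $a_{\ell+1}$-coefficient, so it does not address the issue). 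Finally, once the recursion is in hand, the telescoping is simpler than you suggest: one substitutes directly and drops the nonnegative remainder $2^{-(\ell+1)rs}\Vert(T^{2^{\ell+1}}-I)^rf\Vert_B^s$; no tail bound $\Vert\Delta^{r+1}_{T^{2^j}}f\Vert_B\le2^{r+1}\Vert f\Vert_B$ is needed.
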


\begin{remark}\label{Rem2.2}
Examples of such Banach spaces on $\IR^d,\, \IT^d$ or $S^{d-1}$ are
$L_p$ spaces for

\noi
$1<p<\infty  $ where $s=\max (p,2).$  An example of $T$
on a space of functions on $\IR^d$ (and $\IT^d)$ is $Tf(x) = f(x+\xi  )$
with $x,\xi  \in \IR^d.$  An example of $T$ on a space of functions
on $S^{d-1}$ is $Tf(x) = f(\rho x)$ with $x\in S^{d-1}$ and $\rho\in
SO(d)$ (the orthogonal matrices on $\IR^d$ whose determinant equals
$1).$
Also $T = T(t)$ may be a semigroup  of contractions, the
simplest being $T(t)f(x) = f(x+t)$ on $L_p(\IR_+),$ but other examples
important for applications will be described at length.
\end{remark}

\begin{proof}
Let $\TT$ be any linear contraction operator on $B$. We note that $\TT^n f= \TT(\TT^{n-1}f)$ and follow
\cite{Di88} to define $F=\frac 12(\TT^2-I)\varphi  $ and $G=-\,\frac 12(\TT-I)^2
\varphi$, $\varphi\in B$, so that $F+G = (\TT-I)\varphi  $ and $F-G = \TT(\TT-I)\varphi$.
As $\TT$ is a contraction, we have $\max(\Vert F+G\Vert  _B,\Vert
F-G\Vert  _B) = \Vert  F+G\Vert  _B = \Vert  (\TT-I)\varphi  \Vert  _B$ and by~\eqref{Eq1.2} with
$\varphi =(\TT-I)^{r-1}f$, $f\in B$, we obtain
\begin{equation}\label{Eq2.3}
\Vert  (\TT-I)^r f\Vert  ^s_B
\ge \frac{1}{2^s}\,\Vert
(\TT^2-I)(\TT-I)^{r-1}f\Vert  ^s_B + m\,\frac{1}{2^s}\,\Vert
(\TT-I)^{r+1}f\Vert ^s_B.
\end{equation}
Recalling that $\TT$ is a contraction, we have
\[
\Vert (\TT^2-I)^r f\Vert ^s_B=
\Vert (\TT+I)^r(\TT-I)^r f\Vert ^s_B \le
2^{(r-1)s} \Vert (\TT+I)(\TT-I)^r f\Vert ^s_B =
2^{(r-1)s} \Vert (\TT^2-I)(\TT-I)^{r-1} f\Vert ^s_B,
\]
which, combined with~\eqref{Eq2.3}, yields
\begin{equation}\label{Eq2.4}
\Vert  (\TT-I)^r f\Vert  ^s_B
\ge \frac{1}{2^{rs}}\,\Vert
(\TT^2-I)^rf\Vert  ^s_B + m\,\frac{1}{2^s}\,\Vert
(\TT-I)^{r+1}f\Vert ^s_B.
\end{equation}
Now we use~\eqref{Eq2.4} iteratively with $\TT=T$, $\TT=T^2$, $\TT=T^4$, $\dots$, $\TT=T^{2^\ell}$ to obtain
$$
\begin{aligned}
\Vert  (T-I)^r f\Vert  ^s_B
&\ge \frac{1}{2^{rs}}\, \Vert  (T^2-I)^r f\Vert  ^s_B +
m\,\frac{1}{2^s}\,\Vert  (T-I)^{r+1}f\Vert  ^s_B\\
&\ge \frac{1}{2^{2rs}} \,\Vert  (T^4-I)^r f\Vert  ^s_B +
m\,\frac{1}{2^s}\,\Big( \Vert  (T-I)^{r+1} f\Vert  ^s_B
+\frac{1}{2^{rs}}\,\Vert  (T^2-I)^{r+1}f\Vert  ^s_B\Big)\\
&\ge \ldots\ge \frac{1}{2^{(\ell+1)rs}}\,\Vert  \big(T^{2^{\ell+1}}-I\big)^r
f\Vert  ^s_B +
m\,\frac{1}{2^s}\,\Big(\sum^{\ell}  _{j=0}\,\frac{1}{2^{rsj}}\,\Vert
(T^{2^j}-I)^{r+1} f\Vert  ^s_B\Big)\\
&\ge
\Big(\frac{m}{2^s}\Big)\,\sum^{\ell}  _{j=0}\,\frac{1}{2^{rsj}}\,\Vert
(T^{2^j}-I)^{r+1}f\Vert  ^s_B,
\end{aligned}
$$
which implies~\eqref{Eq2.1} with $m_1=\displaystyle\frac{m^{1/s}}2$.
\end{proof}

The inequality (\ref{Eq2.1}), which is at the core of most of the results
in this paper, is very simple, but to apply it successfully, we will
need many and perhaps more sophisticated results.

\section{The condition on the space}\label{Sec3}
In this section we will discuss the condition (\ref{Eq1.2}), exhibit
spaces for which it is valid and for what $s.$  The condition
(\ref{Eq1.1}) was shown in \cite{Di88} to be equivalent to the
condition
\begin{equation}\label{Eq3.1}
\eta  _B(\sigma  ) = \us{\os{\Vert  E\Vert  =1}{\us{\Vert  G\Vert
=\sigma}{}}}\sup\,\Big(\frac 12\, \Vert  F+G\Vert  _B +\frac 12\, \Vert
 F-G\Vert  _B-1\Big), \q \eta  _B(\sigma  ) \le k\sigma  ^q,
\end{equation}
which was extensively investigated, and spaces $B$ satisfying
 (\ref{Eq3.1}) are described (see \cite[p.63]{Li-Tz}) as having
modulus of
smoothness of power type $q.$
We note that the concept modulus of smoothness in \cite{Li-Tz}
describes the smoothness of the unit ball of the Banach space $B$
(in relation to a specific norm), and is not related to the concept
with the same name (see for instance (\ref{Eq1.5})) in approximation
theory describing smoothness of a function (i.e. an element of $B).$
We note that we found (\ref{Eq1.1})
easier to use in classical analysis and also  easier to verify (see
\cite[p.49]{De-Lo}).

In the next theorem we show that
(\ref{Eq1.2}) is dual to (\ref{Eq1.1}), and use that later to examine
spaces that satisfy (\ref{Eq1.2}) and for what~$s.$  As a result we
will show (later) that a big class of Orlicz spaces satisfies
(\ref{Eq1.2}) and give examples of such spaces.

\begin{theorem}\label{Thm3.1}
Suppose $B$is a Banach space endowed with a norm which  for
some $q,$\; $1<q\le 2  $ satisfies
\begin{equation}\label{Eq3.2}
\frac 12\,\Vert  {x+y}\Vert  _B + \,\frac 12\,\Vert
{x-y}\Vert  _B\le (\Vert  x\Vert  ^q_B + M\Vert  y\Vert
^q_B)^{1/q} \q\text{\rm for all}\q x,y\in B.
\end{equation}
Then the dual of $B,$ \, $X=B^*$ (with the norm dual to that
satisfying {\rm (\ref{Eq3.2})}) satisfies
\begin{equation}\label{Eq3.3}
\max(\Vert  \varphi  +\psi  \Vert  _{X},\,\Vert  \varphi  -\psi
\Vert  _{X})\ge (\Vert  \varphi  \Vert  ^s_{X} + m\Vert  \psi
\Vert  ^s_{X})^{1/s}\q\text{\rm for all}\q \varphi  ,\psi  \in
X=B^*
\end{equation}
with $s=\frac{q}{q-1}\;\big(\frac 1s +\frac 1q = 1\big)$ and
$m=M^{-1/(q-1)}.$  Moreover, if for a given norm of $X$
{\rm(\ref{Eq3.3})} is satisfied, then $B=X^*$ (with norm dual to that
satisfying {\rm(\ref{Eq3.3})}) satisfies {\rm(\ref{Eq3.2}).}

\end{theorem}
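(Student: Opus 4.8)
The plan is to prove the duality between (\ref{Eq3.2}) and (\ref{Eq3.3}) by passing through the equivalent moduli-of-smoothness/convexity formulations and invoking the classical Lindenstrauss-type duality between modulus of smoothness of power type $q$ and modulus of convexity of power type $s$. First I would rephrase (\ref{Eq3.2}): by the remark in the paper it is equivalent to $\eta_B(\sigma)\le k\sigma^q$ where $\eta_B$ is the modulus of smoothness of $B$ defined in (\ref{Eq3.1}); one checks directly that (\ref{Eq3.2}) with constant $M$ gives $\eta_B(\sigma)\le(1+M\sigma^q)^{1/q}-1\le \tfrac1q M\sigma^q$, so one can take $k=M/q$, and conversely $\eta_B(\sigma)\le k\sigma^q$ yields (\ref{Eq3.2}) with $M$ a fixed multiple of $k$ (this is the content of the equivalence cited from \cite{Di88}). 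In parallel I would introduce the modulus of convexity $\delta_X(\varepsilon)=\inf\{1-\tfrac12\|\varphi+\psi\|_X : \|\varphi\|_X=\|\psi\|_X=1,\ \|\varphi-\psi\|_X\ge\varepsilon\}$ and show that the requirement $\delta_X(\varepsilon)\ge c\varepsilon^s$ is equivalent to an inequality of the form (\ref{Eq3.3}); indeed a homogeneity/rescaling argument reduces (\ref{Eq3.3}) to the statement that whenever $\|\varphi\|_X=1$ one has $\max(\|\varphi+\psi\|_X,\|\varphi-\psi\|_X)^s\ge 1+m\|\psi\|_X^s$, and the parallelogram-type symmetrization $\max(a,b)\ge\tfrac12(a+b)$ together with the definition of $\delta_X$ turns this into a power-type lower bound on $\delta_X$, with $m$ and $c$ each a fixed function of the other.

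With these two reformulations in hand, the core is the classical duality of Lindenstrauss: a Banach space $B$ has modulus of smoothness of power type $q$ if and only if its dual $X=B^*$ has modulus of convexity of power type $s$ with $\tfrac1q+\tfrac1s=1$. I would quote this (it is in \cite[p.63 ff.]{Li-Tz}) and then track the constants carefully enough to land on the precise values $s=q/(q-1)$ and $m=M^{-1/(q-1)}$ claimed in the statement. The cleanest route to the sharp constant is not to go through the qualitative $\eta$/$\delta$ equivalence but to dualize (\ref{Eq3.2}) directly: for fixed $\varphi,\psi\in X$ pick norming functionals and write, for arbitrary $x,y\in B$,
\[
\langle \varphi+\psi, x\rangle + \langle \varphi-\psi, y\rangle
\le \|\varphi+\psi\|_X\|x\|_B + \|\varphi-\psi\|_X\|y\|_B;
\]
rewriting the left side as $\langle\varphi, x+y\rangle+\langle\psi, x-y\rangle$ and optimizing the right side over $x,y$ subject to controlling $\tfrac12\|x+y\|_B+\tfrac12\|x-y\|_B$ via (\ref{Eq3.2}), one is led by a Hölder/Young argument (with conjugate exponents $q,s$ and the multiplicative constant $M$) exactly to $\max(\|\varphi+\psi\|_X,\|\varphi-\psi\|_X)^s\ge\|\varphi\|_X^s+M^{-1/(q-1)}\|\psi\|_X^s$. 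I would present this computation as the main line of the proof, since it simultaneously gives (\ref{Eq3.3}) and pins down $m$.

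For the converse direction — that (\ref{Eq3.3}) for a norm on $X$ forces (\ref{Eq3.2}) on $B=X^*$ — I would run the same Young-type duality backwards, using that $B$ is (isometrically, via the given norms) the dual of $X$, so that $\|x\|_B=\sup\{\langle\varphi,x\rangle:\|\varphi\|_X\le1\}$; the symmetric dualization of (\ref{Eq3.3}) produces (\ref{Eq3.2}) with $M=m^{-(q-1)}$, i.e.\ the relation $m=M^{-1/(q-1)}$ read in reverse. The main obstacle I anticipate is bookkeeping of the constants under duality: the qualitative equivalences ``(\ref{Eq3.2}) $\iff$ power-type smoothness'' and ``(\ref{Eq3.3}) $\iff$ power-type convexity'' are standard, but matching the \emph{exact} constant $m=M^{-1/(q-1)}$ requires doing the Legendre/Young transform by hand rather than citing it, and one must be careful that the ``$\max$'' on the left of (\ref{Eq3.3}) (as opposed to the average $\tfrac12\|\varphi+\psi\|+\tfrac12\|\varphi-\psi\|$) is what naturally comes out of the dualization — replacing the sum by the max in (\ref{Eq3.2}) would weaken it, while here the max is genuinely the right quantity and, pleasantly, no loss is incurred. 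A secondary point to handle is reflexivity/second-dual issues: since $1<q\le2$ and (\ref{Eq3.2}) holds, $B$ is uniformly smooth hence reflexive, so $B=X^*$ isometrically with the stated norms and the last sentence of the theorem is not circular.
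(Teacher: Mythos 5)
Your main line --- dualizing (\ref{Eq3.2}) directly via the pairing on $B\times B$ and computing the dual of the weighted $\ell^q$-type norm $(\Vert u\Vert_B^q + M\Vert v\Vert_B^q)^{1/q}$ to obtain the weighted $\ell^s$-type norm with $m=M^{-1/(q-1)}$ --- is essentially the paper's argument; the paper merely packages the same dualization as a norm bound on the adjoint of the linear map $A(x,y)=\bigl(\tfrac{x+y}{2},\tfrac{x-y}{2}\bigr)$ between suitable product norms on $B\times B$. Your remarks on reflexivity and on running the dualization backwards for the converse likewise match the paper's treatment.
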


\begin{proof} Define the operator $A$ on $(x,y)\in B\times B=\wt B$
by
$$
A(x,y) =\Big(\frac{x+y}{2}\,,\,\frac{x-y}{2}\Big)
$$
which we consider as a transformation between $\wt B$ with the norm
$\Vert  (u,v)\Vert  _{\wt B_1} = \Vert  u\Vert  _B + \Vert  v\Vert
_B$ and $\wt B$ with the (equivalent) norm $\Vert  (u,v)\Vert  _{\wt
B_2} = (\Vert  u\Vert  ^q_B + M\Vert  v\Vert  ^q_B)^{1/q}.$

Using (\ref{Eq3.2}), we now have   $\Vert  A\Vert  _{\wt B_2\to \wt
B_1}\le 1.$ The dual to $\wt B,\, \wt B\,^*,$ is given by
 $(\varphi  ,\psi  )(u,v) = \varphi  u + \psi
v$ where $\varphi  ,\,\psi \in B^*.$  To calculate $A^*,$ we write
$$
(\bar\varphi  ,\bar\psi  )A(x,y) = \frac 12\,(\bar\varphi  ,\bar\psi
)({x+y},\,{x-y}) = \frac 12\,({\bar \varphi
 +\bar \psi  },\,{\bar\varphi  -\bar\psi  })(x,y
) = A^*(\bar\varphi  ,\bar\psi  )(x,y).
$$
Setting $\frac{\bar\varphi  +\bar \psi  }{2} =\varphi  ,$\,
$\frac{\bar\varphi  -\bar\psi  }{2} = \psi,  $ we have $A^*(\varphi
+\psi  ,\varphi  -\psi  ) = (\varphi  ,\psi  ).$

Since $\Vert  A\Vert  _{\wt B_2\to \wt B_1}\le 1,$ we have $\Vert
A^*\Vert  _{\wt B\,^*_1\to \wt B\,^*_2} \le 1.$  We now write
$$
\Vert  (\varphi  ,\psi  )\Vert  _{\wt B\,^*_1} = \us{\Vert  u\Vert
_B+\Vert  v\Vert  _B=1}\sup\, \vert  (\varphi  u+\psi  v)\vert  \le
\max\,(\Vert  \varphi  \Vert  _{B^*}, \Vert  \psi  \Vert  _{B^*}),
$$
and equality follows,
choosing $v=0$ if $\Vert  \varphi  \Vert
_{B^*}\ge \Vert
\psi  \Vert  _{B^*}$
and choosing $u=0$ otherwise.

For the norm of $\wt B\,^*_2$
$$
\begin{aligned}
\Vert  (\varphi  ,\psi)  \Vert  _{\wt B\,^*_2} &= \sup\{\vert
\varphi  u+\psi  v\vert  :(\Vert  u\Vert  ^q_B + M\Vert  v\Vert
^q_B)^{1/q} =1\}\\
&\le \sup \{\Vert  \varphi  \Vert  _{B^*} \Vert  u\Vert  _B + \Vert
M^{-1/q}\psi \Vert  _{B^*}\Vert  M^{1/q}v\Vert  _B;\Vert  u\Vert
^q_B + M\Vert  v\Vert  ^q_B=1\}\\
&\le (\Vert  \varphi  \Vert  ^s_{B^*} + M^{-s/q}\Vert  \psi  \Vert
^s_{B^*})^{1/s} \\
&= (\Vert  \varphi  \Vert  ^s_{B^*} + m\Vert  \psi  \Vert
^s_{B^*})^{1/s}
\end{aligned}
$$
with $s=\frac{q}{q-1}$ and $m=M^{-1/(q-1)}.$  To show equality, we
choose $a\ge 0$ and $b\ge 0$ for which $a^q + Mb^q=1$ and
$(a^q,Mb^q)$ is proportional to $(\Vert  \varphi  \Vert
^s_{B^*},m\Vert  \psi  \Vert  ^s_{B^*}),$ and then choose $\Vert
u_n\Vert  _B=a,$ \, $\Vert  v_n\Vert  _B=b$ such that
$ \varphi  u_n\to a\Vert  \varphi  \Vert  _{B^*}$ and $\psi  v_n\to
b\Vert  \psi  \Vert  _{B^*}\,.$

The second assertion can be obtained in a similar way using the
operator $O$ on $(x,y)\in X\times X=\wt X$ given by
$$
O(x,y) = (x+y,x-y)
$$
and endowing $\wt X$ with the norms $\Vert  (x,y)\Vert  _{\wt X_1}
=\max(\Vert  x\Vert  _X,\Vert  y\Vert  _X)$ and $\Vert  (x,y)\Vert
_{\wt X_2} = (\Vert  x\Vert  ^s_X + m\Vert  y\Vert  ^s_X)^{1/s}$,
and hence~\eqref{Eq2.2} is satisfied by all $x,y\in B_1=X^*$ and the $B_1$ norm. In the terminology of~\cite[p.59]{Li-Tz} this implies that $B_1$ is uniformly smooth and hence (see~\cite[p.61, Prop.~1.e.2(ii)]{Li-Tz}) $X$ is uniformly convex. We note that~\eqref{Eq3.2} and the above now imply (see~\cite[p.61, Prop.~1.e.3]{Li-Tz}) that both $B$ and $X$ are reflexive. Therefore, $B_1=B$.
\end{proof}

As a corollary of Theorem \ref{Thm3.1} we show that the condition
(\ref{Eq1.2}) is satisfied by $L_p$ spaces.

\begin{corollary}\label{Cor3.2}
For $L_p$ with $1<p<\infty  $
\begin{equation}\label{Eq3.4}
\max\,(\Vert  F+G\Vert  _p,\Vert  F-G\Vert  _p)\ge (\Vert  F\Vert
^{\max(p,2)}_p + m\Vert  G\Vert  ^{\max(p,2)}_p)^{1/\max(p,2)}
\end{equation}
for some $m>0.$
\end{corollary}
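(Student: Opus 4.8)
The plan is to derive the corollary from Theorem~\ref{Thm3.1} by identifying, for each $p\in(1,\infty)$, an $L_q$-type space whose dual is $L_p$ and verifying the Clarkson-type inequality~\eqref{Eq3.2} for that space. Concretely, fix $p$ and set $q=p/(p-1)$ to be the conjugate exponent, so that $L_p=(L_q)^*$ isometrically (for $\sigma$-finite measures, which covers $\IR^d$, $\IT^d$ and $S^{d-1}$ with their natural measures). By Theorem~\ref{Thm3.1}, if $L_q$ satisfies
\begin{equation*}
\tfrac12\Vert x+y\Vert_q + \tfrac12\Vert x-y\Vert_q \le (\Vert x\Vert_q^{q_0} + M\Vert y\Vert_q^{q_0})^{1/q_0}
\end{equation*}
for some $1<q_0\le 2$ and $M\ge 1$, then $L_p=(L_q)^*$ satisfies~\eqref{Eq1.2} with $s=q_0/(q_0-1)$ and $m=M^{-1/(q_0-1)}$. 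So the task reduces to: for which power $q_0$ does $L_q$ have ``modulus of smoothness of power type $q_0$'' (in the sense of~\eqref{Eq3.1}), and does the resulting $s=q_0'$ equal $\max(p,2)$?

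The key computation is the standard fact (Clarkson/Hanner inequalities, see e.g.\ \cite[p.49]{De-Lo} or \cite{Li-Tz}) that $L_q$ has modulus of smoothness of power type $\min(q,2)$, and no better. Thus I would split into two cases. \textbf{Case $p\ge 2$:} then $q=p'\le 2$, so $L_q$ has modulus of smoothness of power type $q_0=q$; hence $s=q_0'=q'=p=\max(p,2)$, giving~\eqref{Eq3.4} directly. \textbf{Case $1<p<2$:} then $q=p'>2$, so $L_q$ has modulus of smoothness of power type $q_0=2$; hence $s=q_0'=2=\max(p,2)$, again as claimed. In both cases Theorem~\ref{Thm3.1} supplies an explicit $m>0$ (namely $m=M^{-1/(q_0-1)}$ where $M$ is the constant in the Clarkson-type inequality for $L_q$). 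Either way we obtain~\eqref{Eq3.4}.

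The main obstacle — really the only nontrivial input — is establishing~\eqref{Eq3.2} for $L_q$ with the correct power type, i.e.\ the inequality $\tfrac12\Vert x+y\Vert_q+\tfrac12\Vert x-y\Vert_q\le(\Vert x\Vert_q^{q_0}+M\Vert y\Vert_q^{q_0})^{1/q_0}$ with $q_0=\min(q,2)$. For $1<q\le 2$ this is a consequence of Hanner's inequality (or the two-uniform-smoothness estimate for $L_q$ with $q\le 2$); for $q>2$ one uses that $\Vert x+y\Vert_q^2+\Vert x-y\Vert_q^2\le 2\Vert x\Vert_q^2+2(q-1)\Vert y\Vert_q^2$ and then passes from the quadratic mean to the arithmetic mean of $\Vert x\pm y\Vert_q$. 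Since both statements are classical and recorded in the references cited in the paper, I would simply quote them rather than reprove them, and then the corollary follows by the duality of Theorem~\ref{Thm3.1} as above. One should also remark that the $L_p$ in question must be over a measure space rich enough for the supremum-defining configurations in~\eqref{Eq1.2} to be (approximately) attained, but this is automatic for the non-atomic measures on $\IR^d$, $\IT^d$ and $S^{d-1}$ relevant here.
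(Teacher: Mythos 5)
Your argument is correct and follows essentially the same route as the paper: use the Clarkson/Hanner-type fact that $L_r$ satisfies \eqref{Eq3.2} with power $\min(r,2)$, then apply Theorem~\ref{Thm3.1} to dualize, and check that the resulting exponent $s=(\min(p',2))'$ equals $\max(p,2)$ in both cases. The paper compresses the first step to a citation of \cite{Di88}, whereas you sketch it via Hanner and the $2$-uniform-smoothness inequality; the substance and the case split are identical.
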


\begin{proof}
We recall that for $L_p,$\;$1<p<\infty  ,$ (\ref{Eq3.2}) is valid
with $q=\min\,(p,2)$ (see \cite{Di88}) and use Theorem \ref{Thm3.1}.
\end{proof}

\begin{remark} \label{Rem3.3}
As (\ref{Eq3.1}) with $x=F$ and $y=G$ was shown to be equivalent to
(\ref{Eq3.2}) (see \cite{Di88}) and (\ref{Eq3.3}) was shown to be
dual to (\ref{Eq3.2}), the condition
\begin{equation}\label{Eq3.4n}
\begin{gathered}
\delta  _X(\varepsilon  )\ge K\varepsilon  ^s \q\text{where}\\
\delta  _X(\varepsilon  ) \equiv \inf(1-\Vert  \varphi  +\psi\Vert
_X/2 :\varphi  ,\psi\in X, \Vert  \varphi  \Vert  _X = \Vert
\psi\Vert  _X = 1, \Vert  \varphi  -\psi\Vert  _X =\varepsilon  )
\end{gathered}
\end{equation}
which is dual to (\ref{Eq3.1}) (see \cite[p.63]{Li-Tz}) is
equivalent to (\ref{Eq3.3}). Hence we note that the condition
(\ref{Eq3.3}) on (a given norm of) a Banach space $X$ means that $X$
has a modulus of convexity of at least power type $s$ (see
\cite[p.63]{Li-Tz}).
\end{remark}

\begin{remark}\label{Rem3.4}
For a space $B$ both the inequalities (\ref{Eq1.1}) and (\ref{Eq1.3})
depend on the norm and may not be valid for an equivalent norm.
However, the sharp Marchaud inequality or sharp converse inequality
is valid if it is valid for an equivalent norm.  It will be evident
that the validity of the sharp Jackson inequality and of the lower
estimate for the modulus of smoothness will, in the situations proved
in this paper for one norm of $B,$ imply their  validity for any
equivalent norm.
\end{remark}

\begin{remark}\label{Rem3.5}
On the face of it, it may seem that in Theorem~\ref{Thm3.1} we neglected to treat the situation when $q>2$. However, as~\eqref{Eq3.2} is equivalent to $\eta_B(\sigma)\le k\sigma^q$ (with $\eta_B(\sigma)$ of~\eqref{Eq3.1}), and as $\eta_B(\sigma)/\sigma^2$ is equivalent to a non-increasing function for any Banach space (see~\cite[p.64, Prop.~1.e.5]{Li-Tz}), a nontrivial Banach space (different from $\IR$ or $\{0\}$) for which~\eqref{Eq3.2} is satisfied with $q>2$ does not exist.
\end{remark}

We outline now the basic notations (and some facts) concerning
Orlicz spaces (see \cite{Ra-Re} and \cite[pp.265-280]{Be-Sh}) which
we will use in this section and later.  A Young function $\Phi$ is
an increasing convex function on $\IR_+$ satisfying $\Phi(0)=0.$
For a domain $\Omega  $ and a (positive) measure $d\mu  (x)$ the
Orlicz class $\SM(\Phi)$ and the Orlicz functional $M_\Phi(f)$ are
given by
\begin{equation}\label{Eq3.5}
\SM(\Phi) \equiv \Big\{ f:M_\Phi(f) \equiv \int_\Omega  \Phi(\vert
f(x)\vert  )d\mu  (x)<\infty  \Big\}.
\end{equation}

The Luxemburg norm of the Orlicz space is given by
\begin{equation}\label{Eq3.6}
\Vert  f\Vert  _{O_L(\Phi)} \equiv
\inf\Big(a>0:M_\Phi\big(\frac{\vert  f\vert  }{a}\big)\le 1\Big).
\end{equation}
$\Psi(y):\IR_+ \to \IR_+$ is the complementary Young function to
$\Phi(x)$ satisfying $\us{x\to\infty  }\lim\,\frac{\Phi(x)}{x} =
\infty  $  if
\begin{equation}\label{Eq3.7}
\Psi(y) =\sup \{xy - \Phi(x):x\ge 0\} , \q y\ge 0.
\end{equation}
The Orlicz norm of the Orlicz space is given by
\begin{equation}\label{Eq3.8}
\Vert  f\Vert  _{O(\Phi)} \equiv \sup\Big\{\int_\Omega  \vert
f(x)g(x)\vert  d\mu  (x): \int_\Omega  \Psi (\vert  g(x)\vert  )d\mu
 (x)\le 1\big\}.
\end{equation}
A Young function $\Phi$ satisfies the $\Delta  _2$ condition if for
some $K>0$
$$
\Phi(2x)\le K\Phi(x)\q\text{\rm for}\q x\ge x_0\ge 0 \q
(x_0=0\q\text{\rm when}\q \mu  (\Omega  )<\infty  ).
$$
A Young function $\Psi$ satisfies the $\nabla_2$ condition if for some $a>1$
$$
\Psi(x) \le \frac{1}{2a} \,\Psi(ax) \q\text{\rm for} \q x\ge x_0\ge
0 \q (x_0=0\q\text{\rm when}\q \mu  (\Omega  )<\infty  ).
$$

It is known that if $\Phi$ is a Young function, $\Psi$ given by
(\ref{Eq3.7}) is a Young function as well.  Also $\Vert  f\Vert
_{O_L(\Phi)} \le \Vert  f\Vert  _{O(\Phi)} \le 2\Vert  f\Vert
_{O_L(\Phi)}$
(see
\cite[Th.8.14, p.272]{Be-Sh}).  Moreover, if $\Phi$ satisfies the
$\Delta  _2$ condition, the complementary Young function $\Psi$
satisfies the $\nabla_2$ condition (see \cite[Cor.4, p.26]{Ra-Re}).

\begin{lemma}\label{Lem3.5}
Suppose $\Phi$ is a Young function and that $\Phi(u^{1/s})$ is
concave for some $1<s<\infty  .$  Then $\Psi(t^{1/q})$ is convex
where $\Psi$ is the complementary Young function and $\frac 1s
+\frac 1q =1.$
\end{lemma}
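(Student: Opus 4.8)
The plan is to transfer the concavity of $\Phi(u^{1/s})$ through the Legendre--Fenchel duality $(\ref{Eq3.7})$ to obtain the convexity of $\Psi(t^{1/q})$. First I would reduce the problem to a statement about a single convex function of one variable via the substitution $u = x^s$: writing $\phi(x) = \Phi(x)$ and $\psi(y)=\Psi(y)$, the hypothesis says $x \mapsto \phi(x^{1/s})$ is concave, equivalently (since $s>1$) that $\phi$ itself is ``super-$s$-homogeneous'' in the sense that $\phi(\lambda x) \ge \lambda^s \phi(x)$ for $\lambda \ge 1$ whenever one also uses that $\phi(0)=0$; more precisely, concavity of $g(u):=\phi(u^{1/s})$ together with $g(0)=0$ forces $g(u)/u$ to be nonincreasing, i.e. $\phi(x)/x^s$ is nonincreasing in $x$. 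The goal, dually, is to show $\psi(t)/t^q$ is nondecreasing in $t$, which is exactly the convexity of $t \mapsto \psi(t^{1/q})$ combined with $\psi(0)=0$ (here $q>1$, and $1/q = 1 - 1/s$).

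The key step is to compute, or at least control, the conjugate of the scaled function. For $\lambda > 0$ consider $\phi_\lambda(x) := \lambda^{-s}\phi(\lambda x)$. Its Young conjugate is readily computed from $(\ref{Eq3.7})$: $\phi_\lambda^*(y) = \sup_x\{xy - \lambda^{-s}\phi(\lambda x)\} = \lambda^{-s}\sup_x\{(\lambda x)(\lambda^{s-1}y) - \phi(\lambda x)\} = \lambda^{-s}\psi(\lambda^{s-1}y)$. Now the hypothesis ``$\phi(x)/x^s$ nonincreasing'' says precisely that for $\lambda \ge 1$ we have $\phi_\lambda(x) = \lambda^{-s}\phi(\lambda x) \le \phi(x)$ for all $x \ge 0$ (the ``long-range'' inequality after fixing $\lambda x$), hence by the order-reversing property of conjugation, $\phi_\lambda^*(y) \ge \phi^*(y) = \psi(y)$, that is $\lambda^{-s}\psi(\lambda^{s-1}y) \ge \psi(y)$. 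Setting $\mu = \lambda^{s-1} \ge 1$ (since $s>1$) and $\lambda = \mu^{1/(s-1)} = \mu^{q-1}$ (using $1/(s-1) = q-1$, which follows from $1/s+1/q=1$), this reads $\mu^{-(q-1)s}\psi(\mu y) \ge \psi(y)$; and $(q-1)s = q$ by the same identity, so we conclude $\psi(\mu y) \ge \mu^q \psi(y)$ for all $\mu \ge 1$, i.e. $\psi(t)/t^q$ is nondecreasing. Together with $\psi(0)=0$ this gives that $t\mapsto \psi(t^{1/q})$ is convex (a nonnegative function $h$ on $\IR_+$ with $h(0)=0$ and $h(t)/t$ nondecreasing is convex on $\IR_+$, applied to $h(t) = \psi(t^{1/q})$, noting $h(t)/t = \psi(t^{1/q})/t = \psi(s)/s^q$ with $s = t^{1/q}$).

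I should be careful about two technical points. The first is the equivalence ``$\phi(x^{1/s})$ concave with value $0$ at $0$'' $\iff$ ``$\phi(x)/x^s$ nonincreasing'': the forward direction uses that a concave function $g$ on $[0,\infty)$ with $g(0)\ge 0$ has $g(u)/u$ nonincreasing, and then one rewrites $g(u)/u = \phi(u^{1/s})/u = \phi(x)/x^s$ with $u = x^s$; I would also remark that $\Phi(0)=0$ is part of the definition of a Young function, so this hypothesis is automatic. The second, and the main obstacle, is the direction of the conjugation inequality and the bookkeeping of the exponents $s, q, s-1, q-1$: one must verify that $\lambda \ge 1 \iff \mu = \lambda^{s-1} \ge 1$, that $1/(s-1) = q-1$, and that $(s-1)\cdot s/(s-1)\cdot\ldots$ collapse correctly to give exponent exactly $q$ on the $\mu$ — all consequences of $\tfrac1s+\tfrac1q=1$, but worth stating so the reader can follow. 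I would also note that monotone rearrangement / the order-reversing property of the Legendre transform is elementary here because we only compare two explicit functions pointwise. One alternative, if the exponent-chasing gets unwieldy, is to argue directly with the representation $\Psi(y) = \sup_{x>0}\{xy - \Phi(x)\}$ and substitute $x = t^{1/q}v$ to exhibit $\Psi(t^{1/q})$ as a supremum of affine functions of $t$; but the conjugation-of-scaled-function route above is cleaner and I would present that.
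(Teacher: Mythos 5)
There is a genuine gap, and it is concentrated in the very first and the very last step of your argument. You reduce the hypothesis ``\,$\Phi(u^{1/s})$ is concave\,'' to the first-order statement ``\,$\Phi(x)/x^s$ is nonincreasing\,'', and at the end you claim that its dual, ``\,$\Psi(t)/t^q$ is nondecreasing\,'' (equivalently, $h(t)/t$ nondecreasing for $h(t)=\Psi(t^{1/q})$), together with $h(0)=0$, implies $h$ is convex. That last implication is false. For a concrete counterexample, let $h$ be the continuous piecewise-linear function on $[0,3]$ with $h(0)=0$, $h(1)=1$, $h(2)=3$, $h(3)=4.6$; then on $[1,2]$, $h(t)/t=2-1/t$, on $[2,3]$, $h(t)/t=1.6-0.2/t$, and these patch continuously (both equal $3/2$ at $t=2$), so $h(t)/t$ is nondecreasing throughout, yet the slope of $h$ drops from $2$ to $1.6$ across $t=2$, so $h$ is not convex. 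A function with $h(0)=0$ and $h(t)/t$ nondecreasing is merely \emph{star-shaped}; star-shaped does not imply convex. The conjugation computation in the middle of your argument is correct and does yield $\Psi(\mu y)\ge \mu^q\Psi(y)$ for $\mu\ge1$, but that is precisely the star-shapedness of $\Psi(t^{1/q})$, not its convexity, so the chain of implications does not close.

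The underlying issue is that concavity of $g(u)=\Phi(u^{1/s})$ is a second-order condition, while $g(u)/u$ nonincreasing (which is all you retain) is a first-order condition, strictly weaker; your conjugation step faithfully dualizes the weaker statement and cannot recover more on the $\Psi$ side. If you want to save a monotonicity-type argument, you would have to dualize at the level of derivatives: concavity of $g$ gives that $g'_+(u)=\tfrac1s\Phi'(u^{1/s})u^{1/s-1}$ is nonincreasing, i.e.\ $\Phi'(x)/x^{s-1}$ is nonincreasing, and using that $\Psi'$ is the (generalized) inverse of $\Phi'$ one gets $\Psi'(y)/y^{q-1}$ nondecreasing, which \emph{is} equivalent to convexity of $\Psi(t^{1/q})$. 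But the cleanest route is the one you list as an ``alternative'' and set aside: concavity of $g$ gives $g(u)=\inf_{(a,b)\in L}(au+b)$ with $a>0$ (since $g\nearrow\infty$), hence $\Phi(x)=\inf_{(a,b)\in L}(ax^s+b)$; then $\Psi(y)=\sup_{(a,b)\in L}\sup_{x\ge0}(xy-ax^s-b)$ and the inner supremum evaluates exactly (since $a>0$ and $s>1$) to $c_a\,y^q-b$ for a constant $c_a$, so that $\Psi(t^{1/q})=\sup_{(a,b)\in L}(c_a t-b)$ is a supremum of affine functions of $t$ and is therefore convex. This is the paper's proof; it preserves the full second-order information in one stroke.
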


\begin{proof}
Let $g(u)\equiv \Phi(u^{1/s})$, $u\ge0$. Then
$$
g(u)=\inf_{z\ge0}(g_+'(z)(u-z)+g(z))
$$
and as $\lim_{u\to\infty}g(u)=+\infty$, for every $z\ge0$ we have $g_+'(z)>0$. In other words,
$g(u)=\inf_{(a,b)\in L}(au+b)$,
where $L$ is a subset of $(0,+\infty)\times\IR$.
Therefore, $\Phi(x)=\inf_{(a,b)\in L}(ax^s+b)$. By the definition of $\Psi$,
\[
\Psi(y)=\sup_{x\ge0}(xy-\Phi(x))
=\sup_{x\ge0}\sup_{(a,b)\in L}(xy-ax^s-b)=\sup_{(a,b)\in L}\sup_{x\ge0}(xy-ax^s-b).
\]
As $s>1$, the second supremum is achieved at $x=(\frac{y}{as})^{\frac1{s-1}}$. Hence,
\[
\Psi(y)=\sup_{(a,b)\in
L}\Bigl(\bigl(\frac1{(as)^{\frac1{s-1}}}-a(as)^{-q}\bigr)y^q-b\Bigr),
\]
which means that $\Psi(t^{1/q})$ is a supremum of a family of
functions  linear in $t,$
and therefore $\Psi(t^{1/q})$ is convex.
\end{proof}

\begin{lemma}\label{Lem3.6}
Suppose $\Phi(u^{1/s})$ is concave for some $s$, $2\le s<\infty$,
where $\Phi$ is a Young function satisfying the $\nabla_2$ condition.
Then there exist constants $A,m>0$ and a Young function
$\widetilde\Phi(u)$, such that $A^{-1}\Phi(u)\le\widetilde\Phi(u)\le
A\Phi(u)$, satisfying
\begin{equation}\label{star}
\max\{\Vert  {f+g}\Vert  _{O(\wt\Phi)},\Vert  {f-g}\Vert  _{O(\wt\Phi)}\}\ge
(\Vert  {f}\Vert  _{O(\wt\Phi)}^s+m\Vert  {g}\Vert  _{O(\wt\Phi)}^s)^{1/s},
\quad \text{for all } f,g\in \SM(\Phi).
\end{equation}
\end{lemma}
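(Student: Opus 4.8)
The plan is to obtain~\eqref{star} by duality from Theorem~\ref{Thm3.1}, applied to an appropriately renormed Orlicz space built from the complementary Young function. Write $\Psi$ for the complementary Young function of $\Phi$, and let $q$ be defined by $\frac1s+\frac1q=1$; since $2\le s<\infty$ we have $1<q\le2$. First I would record the regularity that is available for free. As $g(u)\equiv\Phi(u^{1/s})$ is concave with $g(0)=0$, we have $g(\lambda u)\le\lambda g(u)$ for $\lambda\ge1$, hence $\Phi(2x)=g\big(2^sx^s\big)\le2^s\Phi(x)$, so $\Phi\in\Delta_2$; together with the hypothesis $\Phi\in\nabla_2$ this yields $\Phi\in\Delta_2\cap\nabla_2$, and consequently $\Psi\in\Delta_2\cap\nabla_2$ as well (see~\cite{Ra-Re}), $L_\Phi$ and $L_\Psi$ are reflexive, $\SM(\Phi)$ equals the whole Orlicz space $O(\Phi)$, and $(L_\Psi,\Vert\cdot\Vert_{O_L(\Psi)})^*=(L_\Phi,\Vert\cdot\Vert_{O(\Phi)})$. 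Moreover, by Lemma~\ref{Lem3.5}, $\Psi(t^{1/q})$ is convex, i.e.\ $\Psi(t)=N(t^q)$ for a Young function $N$.

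The main step is to produce a Young function $\wt\Psi$ with $c^{-1}\Psi(t)\le\wt\Psi(t)\le c\Psi(t)$ for which $B\equiv(L_{\wt\Psi},\Vert\cdot\Vert_{O_L(\wt\Psi)})$ satisfies~\eqref{Eq3.2} with exponent $q$ and some $M\ge1$. I would first replace $\Psi$ by an equivalent Young function $\wt\Psi$, still with $\wt\Psi(t^{1/q})$ convex, which is smooth enough and for which in addition $t\mapsto\wt\Psi(t)/t^2$ is non-increasing (note that $t\mapsto\wt\Psi(t)/t^q$ is automatically non-decreasing once $\wt\Psi(t^{1/q})$ is convex); the extra monotonicity in $t^2$ is obtained by a standard regularization of $N$ that stays within the equivalence class, and it is this modification that produces the $\wt\Phi$ of the statement. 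With $\wt\Psi$ so prepared, $\wt\Psi\in\Delta_2$ makes the unit sphere of $L_{\wt\Psi}$ coincide with $\{f:M_{\wt\Psi}(f)=1\}$, and one verifies~\eqref{Eq3.2} through a Clarkson-type estimate for the modular $M_{\wt\Psi}$ that interpolates between the two extremes $\wt\Psi(t)\asymp t^q$ (second Clarkson inequality) and $\wt\Psi(t)\asymp t^2$ (parallelogram law). I expect this step to be the main obstacle: for $1<q<2$ the underlying pointwise inequality genuinely mixes the exponents $q$ and $q/(q-1)$, so it is not a mechanical consequence of the concavity of $\Phi(u^{1/s})$ and needs the preparation of $\wt\Psi$ just described.

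To finish, let $\wt\Phi$ be the complementary Young function of $\wt\Psi$. Since $\wt\Psi\sim\Psi$ and everything in sight lies in $\Delta_2$, a routine computation gives $A^{-1}\Phi(u)\le\wt\Phi(u)\le A\Phi(u)$ for some $A\ge1$, $\wt\Phi\in\Delta_2\cap\nabla_2$, and $B^*=(L_{\wt\Psi},\Vert\cdot\Vert_{O_L(\wt\Psi)})^*=(L_{\wt\Phi},\Vert\cdot\Vert_{O(\wt\Phi)})$. Applying Theorem~\ref{Thm3.1} to $B$ with the exponent $q$ above, the dual $X=B^*=(L_{\wt\Phi},\Vert\cdot\Vert_{O(\wt\Phi)})$ satisfies~\eqref{Eq3.3}, that is~\eqref{star}, with $s=\frac{q}{q-1}$ and $m=M^{-1/(q-1)}>0$. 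Since $\SM(\Phi)=O(\Phi)=O(\wt\Phi)=X$ and all norms occurring in~\eqref{star} are finite on $X$, the inequality~\eqref{star} holds for every $f,g\in\SM(\Phi)$ with the constants $A$ and $m$ just obtained. \qed
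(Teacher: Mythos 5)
Your overall strategy is exactly the paper's: pass to the complementary function $\Psi$, use Lemma~\ref{Lem3.5} to see that $\Psi(t^{1/q})$ is convex with $\frac1q+\frac1s=1$, replace $\Psi$ by an equivalent Young function $\wt\Psi$ for which the Luxemburg space $O_L(\wt\Psi)$ satisfies~\eqref{Eq3.2}, take $\wt\Phi$ complementary to $\wt\Psi$, and dualize via Theorem~\ref{Thm3.1} using the isometry $(O_L(\wt\Psi))^*\cong O(\wt\Phi)$. Your duality bookkeeping and the observation that $\Phi(u^{1/s})$ concave forces $\Phi\in\Delta_2$ (hence $\SM(\Phi)$ is the whole space) are both correct and useful.

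However, the step you yourself flag as ``the main obstacle'' is in fact the entire content of the lemma, and you do not prove it. You need: there exists a Young function $\wt\Psi$, equivalent to $\Psi$, such that the Luxemburg-normed space $O_L(\wt\Psi)$ satisfies the two-point smoothness inequality~\eqref{Eq3.2} with the exponent $q$. The paper does not argue this from scratch either; it invokes Lemma~2.2 of the authors' earlier paper [Di-Pr] (Ditzian--Prymak, Proc.\ Amer.\ Math.\ Soc.\ 135 (2007)), which is precisely the construction of such a $\wt\Psi$ and the verification of the Clarkson/Hanner-type modular inequality for it. Your sketch --- impose the extra condition that $\wt\Psi(t)/t^2$ be non-increasing and then ``interpolate'' between the $t^q$-Clarkson inequality and the parallelogram law --- is not carried out, and the two ingredients it relies on are both nontrivial: (i) it is not justified that one can always find an equivalent Young function with $\wt\Psi(t)/t^2$ non-increasing while keeping $\wt\Psi(t^{1/q})$ convex (the convexity of $\Psi(t^{1/q})$ only gives that $\Psi(t)/t^q$ is non-decreasing, and no upper bound like $t^2$ follows from the hypotheses without more input); (ii) the ``Clarkson-type estimate for the modular $M_{\wt\Psi}$'' that would convert these monotonicity properties into~\eqref{Eq3.2} is asserted, not proved, and is exactly the substance of [Di-Pr, Lemma~2.2]. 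So, while your route is the same as the paper's in outline, the proposal has a genuine gap at the crucial place: without either proving the modular inequality or citing the result of [Di-Pr] that does, the argument is incomplete.
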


\begin{proof}
The complementary Young function $\Psi$ satisfies the
$\Delta_2$ condition, and $\Psi(t^{1/q})$ is convex for
$\frac1q+\frac1s=1$ by the previous lemma. Thus, we can apply
Lemma~2.2 of~[Di-Pr] for $B=O_L(\Psi)$ and $M=\Psi$, to find a Young function
$N=\wt\Psi$, equivalent to $\Psi$ such that
\begin{equation}
\label{star2}
\frac{\Vert{f+g}\Vert  _{O_L(\wt\Psi)}+\Vert{f-g}\Vert  _{O_L(\wt\Psi)}}{2}\le
(\Vert{f}\Vert  ^q_{O_L(\wt\Psi)} +L\Vert{g}\Vert  ^q_{O_L(\wt\Psi)})^{1/q},
\quad \text{for all } f,g\in \SM(\Psi)
\end{equation}
with $L>0$. Let $\wt\Phi$ be the complementary Young function of $\wt\Psi$.
The Young function $\wt\Phi$ is equivalent to $\Phi$ (\cite[Prop.2, p.15]{Ra-Re})
and the dual of $O_L(\wt\Psi)$ is isometric to $O(\wt\Phi)$
(\cite[Cor.9, p.111]{Ra-Re}). Hence, using Theorem \ref{Thm3.1}, \eqref{star2} implies
\eqref{star}.
\end{proof}

Now we will show examples of Young functions $\Phi$
for which there exists an equivalent Young function
$\wt\Phi$  such that $\wt\Phi(u^{1/s})$ is concave for some $s$,
$2\le s <\infty$, and which satisfies the $\nabla_2$ condition (consequently, the
corresponding Orlicz spaces will satisfy~(1.2)).

We intend to consider $\Phi(u)=u^r(1+|\ln u|)$ and $\Phi(u)=\max\{u^\alpha,u^\beta\}$
for appropriate values of $r, \alpha, \beta$. Note that these functions themselves
(being convex) cannot satisfy the condition that $g(u)\equiv \Phi(u^{1/s})$ is
concave for the following reason: $\Phi'_+(1)>\Phi'_-(1),$ and hence $g'_+(1)>g'_-(1)$.
However, with proper $s$, $g$ can be concave near $0$ and near $\infty$. Our task
is to ``patch'' these pieces together to construct an equivalent function $\wt\Phi$
satisfying the necessary conditions.

\begin{lemma}\label{Lem3.7}
Let $\Phi$ be a Young function such that
\begin{equation}
\label{co}
\Phi(u^{1/s}) \text{ is concave on }[0,a]\text{ and on }[b,\infty),
\end{equation}
where $0<a<b$, $s\ge2$. Then there is a Young function $\wt\Phi$
satisfying
\begin{equation}
\label{c1}
\wt\Phi(u)=c_1\Phi(u), \quad u\in[0,a],
\end{equation}
and
\begin{equation}
\label{c2}
\wt\Phi(u)=c_2+\Phi(u), \quad u\in[b,\infty),
\end{equation}
with some constants $c_1>0$ and $c_2,$ which is equivalent to
$\Phi(u)$ and
also $\wt\Phi(u^{1/s})$ is concave on $[0,\infty)$.
\end{lemma}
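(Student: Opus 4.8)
The plan is to build $\wt\Phi$ by keeping $\Phi$ (up to a multiplicative constant) on $[0,a]$ and (up to an additive constant) on $[b,\infty)$, and to interpolate on the transition interval $[a,b]$ in the variable $u$ by a \emph{linear} function of $u^{1/s}$. That is, work throughout with $g(u)\equiv\wt\Phi(u^{1/s})$ and $h(u)\equiv\Phi(u^{1/s})$: on $[0,a^s]$ set $g=c_1h$, on $[b^s,\infty)$ set $g=h+c_2$, and on $[a^s,b^s]$ let $g$ be the affine function of $u$ agreeing with the two outer pieces in value at the endpoints $a^s$ and $b^s$. Then $\wt\Phi(u)=g(u^s)$ (after renaming), and the conditions \eqref{c1}, \eqref{c2} hold by construction. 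The content of the lemma is that the free parameters $c_1,c_2$ can be chosen so that $g$ is concave on all of $[0,\infty)$, that $\wt\Phi$ is a genuine Young function, and that $\wt\Phi\asymp\Phi$.

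First I would record the elementary facts about one-sided derivatives of a concave function: $h$ is concave on $[0,a^s]$ and on $[b^s,\infty)$, so $h'_+$ exists and is non-increasing on each of these intervals, with $h(0)=0$. The affine middle piece has constant slope, call it $\lambda=(g(b^s)-g(a^s))/(b^s-a^s)$. For $g$ to be concave on the whole half-line it suffices that $g$ be continuous (automatic from the endpoint matching) and that its slope be non-increasing across the two junction points: at $u=a^s$ we need $c_1 h'_-(a^s)\ge\lambda$, i.e. the incoming slope from the left dominates the middle slope, and at $u=b^s$ we need $\lambda\ge h'_+(b^s)$, i.e. the middle slope dominates the outgoing slope on the right. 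The key algebraic step is to solve these two inequalities for $c_1,c_2$ simultaneously. Writing out $\lambda$ in terms of $c_1$ and $c_2$ (it is affine in each), the two inequalities become a pair of linear constraints; one then picks $c_1$ large enough and $c_2$ accordingly — intuitively, scaling up the $[0,a]$ piece steepens its slope at $a^s$ so the first inequality holds, while the additive constant $c_2$ on the right is a free slack variable that lets us make $\lambda$ as large as needed for the second inequality. I would verify a consistent choice exists (e.g. fix $c_1$ so that $c_1 h'_-(a^s)$ exceeds $h'_+(b^s)$, then choose $c_2$ so that $\lambda$ lands in the closed interval $[h'_+(b^s),\,c_1 h'_-(a^s)]$, which is nonempty by that choice of $c_1$).

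Next I would check the remaining bookkeeping. \emph{Young function:} $\wt\Phi(u)=g(u^s)$ with $g$ concave and nondecreasing and $g(0)=0$; since $u\mapsto u^s$ is convex and increasing and $g$ is nondecreasing, one should instead argue convexity of $\wt\Phi$ directly — actually $\wt\Phi$ inherits convexity from $\Phi$ on $[0,a]$ (scalar multiple) and on $[b,\infty)$ (additive constant), and on $[a,b]$ one computes $\wt\Phi(u)=g(a^s)+\lambda(u^s-a^s)$, which is convex in $u$ because $u^s$ is; continuity and the matching of values at $a,b$ give convexity on the whole line provided the one-sided derivatives of $\wt\Phi$ (not of $g$) are also monotone at the junctions — this is a second, separate junction check, of the same flavor as the concavity check for $g$, and I would note that it may force a further (harmless) adjustment or simply follow because the left derivative from the scaled piece is large. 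One also needs $\wt\Phi(0)=0$ (clear) and increasing (clear). \emph{Equivalence $\wt\Phi\asymp\Phi$:} on $[0,a]$ the ratio is the constant $c_1$; on $[b,\infty)$ the ratio $1+c_2/\Phi(u)$ is bounded between $1$ and $1+c_2/\Phi(b)$ (and bounded below away from $0$, using $c_2$ of controlled sign or just $\Phi(u)\ge\Phi(b)>0$); on the compact interval $[a,b]$ both functions are continuous and positive (away from $0$), so the ratio is bounded above and below there. Taking $A$ to be the max of these finitely many bounds gives $A^{-1}\Phi\le\wt\Phi\le A\Phi$.

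The main obstacle is the simultaneous junction analysis: one must choose $c_1$ and $c_2$ so that \emph{both} the concavity of $g=\wt\Phi(u^{1/s})$ across $u=a^s$ and $u=b^s$ holds \emph{and} $\wt\Phi$ itself remains convex across $u=a$ and $u=b$; these are four one-sided-derivative inequalities in two parameters, and one has to confirm the feasible region is nonempty. The saving grace is that $c_1$ (a multiplicative scaling on the left) and $c_2$ (an additive constant on the right) act essentially independently on the two ends, so after a short computation the constraints decouple enough to be satisfied; I expect the argument to reduce to: choose $c_1$ large, then choose $c_2$ in a suitable range, then note all inequalities hold. Everything else — that $\wt\Phi$ is a Young function and equivalent to $\Phi$ — is routine.
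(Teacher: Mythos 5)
Your plan is essentially the same as the paper's: build $\wt\Phi$ piecewise (a scalar multiple of $\Phi$ on $[0,a]$, an additive shift of $\Phi$ on $[b,\infty)$, and an interpolant in the middle), then verify convexity of $\wt\Phi$, concavity of $g(u)=\wt\Phi(u^{1/s})$, and the equivalence $\wt\Phi\asymp\Phi$. Your interpolant (affine $g$ on $[a^s,b^s]$) differs slightly from the paper's (which prescribes $\wt\Phi'$ to be a power function on $[a,b]$), but both can be made to work.

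The genuine gap is in the step you yourself flag as the main obstacle, and the resolution you propose there fails. At $u=a$, convexity of $\wt\Phi$ requires $\wt\Phi'(a-)\le\wt\Phi'(a+)$; since $\wt\Phi'(u)=su^{s-1}g'(u^s)$, this reads $c_1 h'_-(a^s)\le\lambda$, the exact \emph{reverse} of the concavity constraint $c_1 h'_-(a^s)\ge\lambda$ that you already wrote down. Likewise at $u=b$: concavity of $g$ wants $\lambda\ge h'_+(b^s)$ while convexity of $\wt\Phi$ wants $\lambda\le h'_+(b^s)$. So your ``four inequalities in two parameters'' collapse to the two \emph{equalities} $c_1 h'_-(a^s)=\lambda=h'_+(b^s)$, which determine $c_1$, then $\lambda$, then $c_2$ uniquely (up to a global scaling of $\wt\Phi$); there is no feasible region to exhibit. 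In particular, taking $c_1$ large destroys the convexity of $\wt\Phi$ at $a$ rather than helping, and the heuristic that the convexity check should ``simply follow because the left derivative from the scaled piece is large'' has the inequality pointing the wrong way. The paper avoids this by fixing $c_1$ from the outset via the derivative-matching condition $c_1\Phi'(a-)a^{1/s-1}=\Phi'(b+)b^{1/s-1}$ (for their choice of middle piece), which makes $\wt\Phi'=\phi$ continuous and non-decreasing and $g'$ continuous and non-increasing by inspection, with no slack to negotiate. Your construction works once you make the analogous forced choice $c_1=h'_+(b^s)/h'_-(a^s)$, take $\lambda=h'_+(b^s)$, and read off $c_2$ from continuity of $g$ at $b^s$ (all finite and positive under the implicit assumption, present in both proofs, that $\Phi'(a-)>0$); with that correction your equivalence argument is fine.
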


\begin{proof}
As $\Phi$ is convex, it is absolutely continuous and $\Phi'$ exists almost everywhere and is non-decreasing.
We choose $c_1$ to satisfy
\[
c_1\Phi'(a-)a^{\frac1s-1}=\Phi'(b+)b^{\frac1s-1}.
\]
We now define
\[
\phi(u):=\begin{cases}
c_1\Phi'(u), & u\in[0,a),\\
u^{1-\frac1s}\Phi'(b+)b^{\frac1s-1}, & u\in[a,b],\\
\Phi'(u), & u\in(b,\infty),
\end{cases}
\]
and $\wt\Phi(x):=\int_0^x\phi(u)\,du$. Clearly, \eqref{c1} and~\eqref{c2} are satisfied.
Also, as $\phi(a)=c_1\Phi'(a-)$, $\phi(b)=\Phi'(b+)$ and $\phi$ is increasing on $[a,b]$,
$\wt\Phi$ is a Young function. For $u\in[a,b]$ we obtain
\[
(\wt\Phi(u^{1/s}))'=\frac1s\phi(u)u^{\frac1s-1}=\frac1s\Phi'(b+)b^{\frac1s-1}=const=
(\wt\Phi(u^{1/s}))'|_{u=a-}=(\wt\Phi(u^{1/s}))'|_{u=b+}.
\]
Hence, $(\wt\Phi(u^{1/s}))'$ is non-increasing on $[0,\infty)$.

We observe that the resulting Young function $\wt\Phi$ is equivalent to $\Phi$.
\end{proof}

\begin{example}\label{Ex3.8}
Let $\Phi(u)=\max\{u^\alpha,u^\beta\}$, where $1<\alpha<\beta$.
Then $\Phi(u^{1/s})$ satisfies~\eqref{co} for any $s\ge\max\{2,\beta\}$.
\end{example}
\begin{proof}
We have
\[
\Phi(u^{1/s})=
\begin{cases}
u^{\alpha/s}, & u\le 1, \\
u^{\beta/s}, & u>1,
\end{cases}
\]
so both $\alpha/s$ and $\beta/s$ must not exceed $1$.
\end{proof}

\begin{example}\label{Eq3.9}
Let $\Phi(u)=u^r(1+|\ln u|)$, $r\ge(3+\sqrt{5})/2$ (which guarantees that $\Phi$ is a Young function).
Then $\Phi(u^{1/s})$ satisfies~\eqref{co} for any $s>r$ and does not satisfy~\eqref{co} with $s=r$.
\end{example}
\begin{proof}
We have
\[
\Phi'(u)=
\begin{cases}
u^{r-1}(r+1+r\ln u), & u>1, \\
u^{r-1}(r-1-r\ln u), & u<1,
\end{cases}
\]
and
\[
\Phi''(u)=
\begin{cases}
u^{r-2}(r^2+r-1+r(r-1)\ln u), & u>1,\\
u^{r-2}(r^2-3r+1-r(r-1)\ln u), & u<1.
\end{cases}
\]
Hence, $r\ge(3+\sqrt{5})/2$ implies convexity of $\Phi$. We further compute
\[
(\Phi(u^{\frac1s}))'=
\begin{cases}
\frac1s u^{\frac rs-1}(r+1+\frac rs \ln u), & u>1, \\
\frac1s u^{\frac rs-1}(r-1-\frac rs \ln u), & u<1,
\end{cases}
\]
and
\[
(\Phi(u^{\frac1s}))''=
\begin{cases}
\frac1s u^{\frac rs-2}(\frac rs (r+2)-r-1+\frac rs (\frac rs - 1) \ln u), & u>1, \\
\frac1s u^{\frac rs-2}(\frac rs (r-2)-r+1-\frac rs (\frac rs - 1) \ln u), & u<1.
\end{cases}
\]
Under the condition $s>r$, the function $(\Phi(u^{\frac1s}))''$ is clearly non-positive
for $u<1$ and also non-positive for $u>u_0$, where $u_0$ is such that
$\frac rs (\frac rs - 1) \ln u_0 = -1$. If $r=s$, then $(\Phi(u^{\frac1s}))''=1$ for $u>1$.
\end{proof}

\begin{example}\label{Ex3.10}
The Zygmund spaces $L_p(\text{\rm Log}\, L)^\alpha  .$
Let $\Phi(u)=u^p(\ln(2+u))^{\alpha p}$, $\alpha p\ge1$, $p\ge 1$
(see \cite[Def.6.11, p.252]{Be-Sh}).
Then $\Phi(u^{1/s})$ satisfies~\eqref{co} for any $s>p$ and does not satisfy~\eqref{co} with $s=p$.
\end{example}
\begin{proof}
We find
\[
\Phi'(u)=pu^{p-1}\ln^{\alpha p-1}(2+u)\Bigl(\ln(2+u)+\frac{\alpha u}{2+u}\Bigr),
\]
and hence,
\[
(\Phi(u^{\frac1s}))'=\frac ps u^{\frac ps-1} \ln^{\alpha p -1}(2+u^{\frac 1s})
\Bigl(\ln(2+u^{\frac 1s})+\frac{\alpha u^{\frac 1s}}{2+u^{\frac 1s}}\Bigr).
\]
Differentiating once more, we obtain
\begin{equation}
\label{eqt}
(\Phi(u^{\frac1s}))''=\frac ps u^{\frac ps -2} \ln^{\alpha p -1}(2+u^{\frac 1s})
 \Bigl(\ln(2+u^{\frac 1s})+\frac{\alpha u^{\frac 1s}}{2+u^{\frac 1s}}\Bigr)
 \Bigl(\frac ps -1 + D(u)\Bigr),
\end{equation}
where
\[
D(u)=\frac{u^{\frac1s}}{s(2+u^{\frac1s})}\Bigl(\frac{\alpha p-1}{\ln(2+u^{\frac1s})}
+\frac{1+\frac{2\alpha}{2+u^{\frac 1s}}}{\ln(2+u^{\frac 1s})+\frac{\alpha u^{\frac 1s}}{2+u^{\frac 1s}}}\Bigr).
\]
The sign of $(\Phi(u^{\frac1s}))''$ for $u>0$ is determined by the sign of
the last factor of the right hand side of ~\eqref{eqt}. As $D(u)$ is positive for $u>0$, if $s=p$,
then $(\Phi(u^{\frac1s}))''>0$. If $s>p$, then $\frac ps-1<0$ and we can
choose $a,b$ to satisfy~\eqref{co} since
\[
\lim_{u\to0+}D(u)=0 \quad\text{and}\quad
\lim_{u\to\infty}D(u)=0.
\]
\end{proof}

Note that in all the above examples it is easy to verify that $\Phi$ satisfies
the $\Delta_2$ and the $\nabla_2$ conditions.

\section{Applications using Holomorphic semigroups}\label{Sec4}

The operators $T(t),\; T(t):B\to B$ for $t\in [0,\infty  ) =
\IR_+$form a $C_0$ semigroup if

\noi
$T(t+s)f = T(t)T(s)f$ and $\us{t\to
0+}\lim\,\Vert  T(t)f-f\Vert  _B=0.$ \,$\{T(t)\}_{t\ge 0}$ is a
semigroup of contractions if $\Vert  T(t)f\Vert  _B\le \Vert  f\Vert
 _B$ for all $t\in \IR_+$ and $f\in B.$  The infinitesimal generator
$\CA$ related to the semigroup $\{T(t)\}_{t\ge 0}$ is given by
\begin{equation}\label{Eq4.1}
\CA f \equiv \us{t\to 0+}{\text{\rm s\,-}\lim}\; \frac{T(t)f-f}{t}\,,
\end{equation}
(where $\us{t\to 0+}{\text{\rm s\,-}\lim}\,g_t=\varphi  $ if $\Vert  g_t-\varphi
\Vert  _B\to 0$ as $t\to 0+)$
and the domain of $\CA,\;\SD(\CA),$ consists of all $f$ such that the
limit in (\ref{Eq4.1}) exists.  A holomorphic semigroup is a
semigroup satisfying
\begin{equation}\label{Eq4.2}
T(t)f\in \SD(\CA) \q\text{\rm for }\q t>0\q\text{\rm and}\q t\Vert
\CA T(t)f\Vert  _B\le N\Vert  f\Vert  _B
\end{equation}
with $N$ independent of $t$ and $f.$  (Note that (\ref{Eq4.2}) is
essentially a Bernstein-type inequality.)

It was proved (see \cite[Th.5.1, p.74]{Di-Iv}) that for a
holomorphic $C_0$ semigroup of contractions we have
\begin{equation}\label{Eq4.3}
\big\Vert  \big(T(t) - I\big)^r f\big\Vert  _B\approx \us{g\in
\SD(\CA^r)}\inf\, (\Vert  f-g\Vert  _B + t^r \Vert  \CA^rg\Vert
_B)\equiv K_{\CA^r} (f,t^r)_B,
\end{equation}
which is a strong converse inequality of type $A$ in the terminology
of \cite{Di-Iv}.  We recall that by $A(t)\approx E(t)$ one means
$C^{-1}A(t)\le E(t)\le CA(t).$
Using (\ref{Eq4.3}) and general properties of
$K\text{\rm-functionals,}$ we have for holomorphic semigroups
\begin{equation}\label{Eq4.4}
\big\Vert  \big(T(t)-I\big)^r f\big\Vert  _B\approx \us{0<u\le
t}\sup\,\big\Vert  \big(T(u)-I\big)^rf\big\Vert  _B.
\end{equation}

As a corollary of Theorem \ref{Thm2.1} and (\ref{Eq4.3}) we obtain
the following result.

\begin{theorem}\label{Thm4.1}
Suppose that $\{T(t)\}_{t\ge 0}$ is a holomorphic $C_0$ semigroup of
contractions on a Banach space $B$ and that $B$ satisfies the condition
{\rm (\ref{Eq1.2})} for some $2\le s< \infty  $ and $m>0.$  Then for any
integer $r$
\begin{equation}\label{Eq4.5}
K_{\CA^r}(f,t^r)_B\ge C\Big\{\sum^\infty_{j=1} 2^{-jrs} K_{\CA^{r+1}}
(f,2^{j(r+1)}t^{r+1})^s_B\Big\}^{1/s}.
\end{equation}
\end{theorem}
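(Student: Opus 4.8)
The plan is to deduce Theorem~\ref{Thm4.1} from Theorem~\ref{Thm2.1} by choosing the contraction operator $T$ appropriately and translating the resulting statement about powers of $T-I$ into one about $K$-functionals via~\eqref{Eq4.3}. Fix $t>0$ and apply Theorem~\ref{Thm2.1} with the single contraction operator $T(t)$ in place of the abstract $T$; this is legitimate since each $T(t)$ is a linear contraction on $B$ and $B$ satisfies~\eqref{Eq1.2}. Then~\eqref{Eq2.1} reads
\[
\big\Vert \big(T(t)-I\big)^r f\big\Vert_B \ge m_1\Big(\sum_{j=0}^\infty 2^{-jrs}\big\Vert \big(T(t)^{2^j}-I\big)^{r+1} f\big\Vert_B^s\Big)^{1/s}.
\]
Using the semigroup property $T(t)^{2^j}=T(2^j t)$, the right-hand side becomes $m_1\big(\sum_{j\ge0} 2^{-jrs}\Vert (T(2^j t)-I)^{r+1}f\Vert_B^s\big)^{1/s}$.

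Next I would invoke~\eqref{Eq4.3} twice: once for the left-hand side, giving $\Vert (T(t)-I)^r f\Vert_B \le C_1 K_{\CA^r}(f,t^r)_B$, and once for each term of the sum, giving $\Vert (T(2^j t)-I)^{r+1}f\Vert_B \ge C_2^{-1} K_{\CA^{r+1}}(f,(2^j t)^{r+1})_B = C_2^{-1} K_{\CA^{r+1}}(f, 2^{j(r+1)}t^{r+1})_B$. Substituting both into the displayed inequality yields
\[
C_1 K_{\CA^r}(f,t^r)_B \ge m_1 C_2^{-1}\Big(\sum_{j=0}^\infty 2^{-jrs} K_{\CA^{r+1}}\big(f, 2^{j(r+1)}t^{r+1}\big)_B^s\Big)^{1/s},
\]
which is~\eqref{Eq4.5} up to relabeling the summation index (the stated sum starts at $j=1$, but dropping the $j=0$ term only weakens the lower bound, so it follows a fortiori) and absorbing constants into $C$. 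The implied constant $C$ depends only on $B$, $r$, $m$ (through $m_1$), and the holomorphy constant $N$ in~\eqref{Eq4.2} (through $C_2$), hence is independent of $f$ and $t$ as required.

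The only delicate point is the legitimacy of applying~\eqref{Eq4.3} to the operator $T(2^j t)$ with the $K$-functional evaluated at the correct scale: one must check that~\eqref{Eq4.3} holds with a constant uniform in the time parameter, which is exactly the content of the strong converse inequality of type $A$ cited from \cite[Th.5.1, p.74]{Di-Iv} — the equivalence constants there do not depend on $t$. Granting that, there is no real obstacle; the argument is a direct specialization of Theorem~\ref{Thm2.1} combined with the $K$-functional characterization, and the only bookkeeping is to keep track of which powers of $2$ and of $t$ appear. I would also remark that by~\eqref{Eq4.4} the left-hand side $K_{\CA^r}(f,t^r)_B$ may equivalently be replaced by $\sup_{0<u\le t}\Vert (T(u)-I)^r f\Vert_B$, and similarly for the terms on the right, so~\eqref{Eq4.5} immediately gives the corresponding lower estimate for the sup-form modulus of smoothness associated with the semigroup.
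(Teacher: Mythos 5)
Your argument is correct and is exactly the paper's proof: apply Theorem~\ref{Thm2.1} with $T=T(t)$, use the semigroup identity $T(t)^{2^j}=T(2^jt)$, and then convert both sides via the strong converse equivalence~\eqref{Eq4.3} (applied for orders $r$ and $r+1$, with constants uniform in the time parameter). The remarks about dropping the $j=0$ term and about the uniformity of the constants in~\eqref{Eq4.3} are the right sanity checks but add nothing beyond what the paper's one-line proof already contains.
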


\begin{proof}
We use (\ref{Eq2.1}) with $T=T(t)$ and $T^{2^\ell}= T(t2^\ell),$ to
which we apply (\ref{Eq4.3}) (for both $r$ and $r+1),$ which yields
$\big \Vert  \big(T(t)-I\big)^rf\big\Vert  _B\le
C_1K_{\CA^r}(f,t^r)_B$ and $\big\Vert
\big(T(t2^j)-I\big)^{r+1}f\big\Vert  _B\ge
C_2K_{\CA^r}(f,2^{j(r+1)}t^{r+1})_B$ to complete the proof of
(\ref{Eq4.5}).
\end{proof}

The usefulness of Theorem \ref{Thm4.1} is clearly demonstrated by
applying it to the Gauss-Weierstrass semigroup of operators (see for
instance \cite[p.261]{Bu-Be}) given by
\begin{equation}\label{Eq4.6}
W(t)f(x)\equiv \frac{1}{(4\pi t)^{d/2}} \;\int_{\IR^d
}\,\exp\Big(\frac{-\vert  x-u\vert  ^2}{4t}\Big)f(u)du.
\end{equation}

\begin{theorem}\label{Thm4.2}
Suppose that $B,$ a Banach space of functions on $\IR^d,$ satisfies
{\rm (\ref{Eq1.2}) and (\ref{Eq1.3})} and that $B\subset \CS^\pr$
which means that
$B$ is
continuously imbedded in the Schwartz space of tempered
distribution. Then
\begin{equation}\label{Eq4.7}
\begin{aligned}
\big\Vert  \big(W(t)-I\big)^rf\big\Vert  _B &\approx \us{u\le
t}\sup\, \big\Vert  \big(W(u)-I\big)^r f\big\Vert  _B\approx
\us{g\in \SD(\Delta  ^r)}\inf\, (\Vert  f-g\Vert  _B+t^r\Vert
\Delta  ^r g\Vert  _B)\\
&\equiv K_{\Delta  ^r}(f,t^r)_B,
\end{aligned}
\end{equation}

\begin{equation}\label{Eq4.8}
K_{\Delta  ^r} (f,t^r)_B\ge C\Big\{\sum^\infty_{j=1} 2^{-jrs}
K_{\Delta  ^{r+1}} (f,2^{j(r+1)}t^{r+1})^s_B\Big\}^{1/s}
\end{equation}
and
\begin{equation}\label{Eq4.9}
K_{\Delta  ^r} (f,t^r)_B \ge C\Big\{\sum^\infty_{j=1} 2^{-jrs}
E_{2^{j/2}t^{1/2}}(f)^s_B\Big\}^{1/s}
\end{equation}
where $\Delta  $ is the Laplacian and $E_\lambda  (f)_B$ is given by
\begin{equation}\label{Eq4.10}
E_\lambda  (f)_B = \inf\,\{\Vert  f-\varphi  _\sigma  \Vert
_B:\,\supp\; \wh\varphi  _\sigma  (y)\subset (y:\vert  y\vert  \le
\lambda  )\},
\end{equation}
where $\wh \varphi  _\sigma  $ is the Fourier transform of $\varphi
_\sigma  \,.$
\end{theorem}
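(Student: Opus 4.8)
\textbf{Proof proposal for Theorem \ref{Thm4.2}.}
The plan is to recognize the Gauss--Weierstrass family $\{W(t)\}_{t\ge0}$ as a holomorphic $C_0$ semigroup of contractions whose infinitesimal generator is the Laplacian $\Delta$, and then to invoke the machinery already assembled. First I would verify the semigroup axioms: $W(t+s)=W(t)W(s)$ follows from the convolution identity for Gaussians, the contraction property $\Vert W(t)f\Vert_B\le\Vert f\Vert_B$ follows because $W(t)f$ is an average of translates of $f$ (the kernel is a probability density) and, by~\eqref{Eq1.3}, translations are isometries on $B$, so the integral Minkowski inequality gives the bound; strong continuity at $0$ likewise follows from~\eqref{Eq1.3} together with the standard approximate-identity argument. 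The holomorphy condition~\eqref{Eq4.2}, i.e. $t\Vert\Delta W(t)f\Vert_B\le N\Vert f\Vert_B$, is where the embedding $B\subset\CS'$ enters: on the Fourier side $\widehat{W(t)f}(y)=e^{-t|y|^2}\widehat f(y)$, so $\Delta W(t)f$ corresponds to the multiplier $-|y|^2e^{-t|y|^2}$, and one writes $-|y|^2e^{-t|y|^2}=m_t(y)\cdot e^{-(t/2)|y|^2}$ with $m_t(y)=-|y|^2e^{-(t/2)|y|^2}$; since $m_t$ is (up to the scaling $t$) a fixed Schwartz function whose inverse Fourier transform is an $L_1$ kernel of total mass $O(1/t)$, convolution with it is bounded on $B$ with norm $O(1/t)$ by the same translation-averaging argument, giving~\eqref{Eq4.2}. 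Identifying the generator of $W(t)$ with $\Delta$ on its natural domain is standard and again is read off from the multiplier $e^{-t|y|^2}$.

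Once $W(t)$ is known to be a holomorphic $C_0$ semigroup of contractions with generator $\Delta$, the chain of equivalences in~\eqref{Eq4.7} is immediate: the middle $\approx$ is~\eqref{Eq4.4}, and the identification with $K_{\Delta^r}(f,t^r)_B$ is~\eqref{Eq4.3}, both applied with $\CA=\Delta$. Then~\eqref{Eq4.8} is precisely the conclusion~\eqref{Eq4.5} of Theorem~\ref{Thm4.1}, whose hypotheses (holomorphic $C_0$ semigroup of contractions, and~\eqref{Eq1.2} on $B$) we have just checked; no new work is needed beyond rewriting $K_{\CA^{r+1}}(f,2^{j(r+1)}t^{r+1})_B$ with $\CA=\Delta$.

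For the last inequality~\eqref{Eq4.9} the idea is to compare the $K$-functional $K_{\Delta^{r+1}}(f,\tau^{r+1})_B$ appearing in~\eqref{Eq4.8} with the best approximation $E_\lambda(f)_B$ by band-limited functions. I would use the standard one-sided estimate $E_\lambda(f)_B\le C\,K_{\Delta^{k}}(f,\lambda^{-2k})_B$, valid for any $k$ and any space on which translations (hence, by a de la Vall\'ee Poussin type kernel construction, the Fourier projections onto $\{|y|\le\lambda\}$) act boundedly; concretely, given a near-minimizer $g\in\SD(\Delta^{r+1})$ for $K_{\Delta^{r+1}}(f,\lambda^{-2(r+1)})_B$ one applies a fixed smooth Fourier multiplier supported in $\{|y|\le\lambda\}$ and equal to $1$ on $\{|y|\le\lambda/2\}$ to $g$, estimates the error by $\Vert f-g\Vert_B$ plus the high-frequency tail of $g$, and controls the tail by $\lambda^{-2(r+1)}\Vert\Delta^{r+1}g\Vert_B$ via the symbol bound $|y|^{-2(r+1)}\le\lambda^{-2(r+1)}$ on the tail. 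Substituting this into~\eqref{Eq4.8} with $\lambda=2^{j/2}t^{1/2}$, so that $\lambda^{-2(r+1)}\asymp 2^{-j(r+1)}t^{-(r+1)}$, exactly matches the scaling of the $K$-functional term $2^{j(r+1)}t^{r+1}$ there, and~\eqref{Eq4.9} drops out with a new constant.

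The main obstacle is the holomorphy bound~\eqref{Eq4.2}: one must make precise that the multiplier $-|y|^2e^{-t|y|^2}$ corresponds to convolution with an $L_1$-kernel on $B$ of operator norm $O(1/t)$, which requires the hypothesis $B\subset\CS'$ (so that the Fourier/multiplier calculus is meaningful on $B$) together with~\eqref{Eq1.3} (so that convolution operators are bounded with norm at most the $L_1$-norm of the kernel); everything else is bookkeeping on top of Theorems~\ref{Thm2.1} and~\ref{Thm4.1} and the converse-inequality result~\eqref{Eq4.3}. A secondary point requiring a little care is the $E_\lambda$ comparison, where one should make sure the smooth cutoff multiplier used is admissible on $B$, again a consequence of~\eqref{Eq1.3} via an $L_1$ kernel estimate.
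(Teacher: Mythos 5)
Your approach tracks the paper's proof closely: recognize $\{W(t)\}$ as a holomorphic $C_0$ semigroup of contractions whose effective generator is the Laplacian, use~\eqref{Eq4.3} and~\eqref{Eq4.4} to get~\eqref{Eq4.7}, apply Theorem~\ref{Thm4.1} (i.e.~\eqref{Eq4.5}) for~\eqref{Eq4.8}, and feed the band-limited approximation estimate $E_\lambda(f)_B\le CK_{\Delta^{r+1}}(f,\lambda^{-2(r+1)})_B$ (the analogue of~\eqref{Eq4.12}) into~\eqref{Eq4.8} to get~\eqref{Eq4.9}.

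Two places deserve more care. First, identifying the abstract generator $\CA$ with $\Delta$ on a general Banach space $B$ is not merely "read off from the multiplier": the paper actually establishes $\Delta W(s)f=\CA W(s)f$ for $s>0$ by pairing against $\CS$-test functions --- this is exactly where the hypothesis $B\subset\CS'$ is used --- and then re-runs the converse direction of the equivalence in~\eqref{Eq4.7} following the proof of [Di-Iv, Th.~5.1] with $\Delta$ in place of $\CA$, rather than citing~\eqref{Eq4.3} wholesale; your sketch should include this intermediate identification step. Second, and concretely wrong as written, is the claimed scaling match in the last paragraph: with $\lambda=2^{j/2}t^{1/2}$ one has $\lambda^{-2(r+1)}=2^{-j(r+1)}t^{-(r+1)}$, which is the \emph{reciprocal} of the argument $2^{j(r+1)}t^{r+1}$ appearing in~\eqref{Eq4.8}, not equal to it. The substitution that actually works is $\lambda=(2^jt)^{-1/2}$, which produces $E_{(2^jt)^{-1/2}}(f)$ on the right-hand side --- consistent with the integral form $(4.9)'$ in Remark~\ref{Rem4.4}. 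This suggests the subscript in~\eqref{Eq4.9} as printed carries a sign error in the exponent, and your derivation has inherited it while asserting a match that does not hold; with the corrected $\lambda$ your argument goes through.
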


\begin{proof}
For $f\in B$ satisfying (\ref{Eq1.3}) we may use the Riemann vector
valued integration in (\ref{Eq4.6}) to obtain for all $f\in B$
\begin{equation}\label{Eq4.11}
\begin{aligned}
\Vert  W(t)f\Vert  _B &\le \Vert  f\Vert  _B, \q \us{t\to 0+}\lim\,
\Vert  W(t)f-f\Vert  _B =0, \\
\Delta  ^rW(t)f&\in B\q\text{\rm and}
\q \Vert  \Delta  ^r W(t)f\Vert  _B\le \Big(\frac dt\Big)^r\Vert
f\Vert  _B.
\end{aligned}
\end{equation}
For $\varphi  \in \CS,$ the Schwartz space of test functions,
straightforward computation implies $\frac{W(t)\varphi  -\varphi
}{t} - \Delta  \varphi   \to 0$ in $\CS$ and hence in $B^*,$ the
dual to $B.$  Therefore, whenever $f\in \SD(\Delta  ),$ that is when
$\Delta  f$ exists in the $\CS^\pr$ sense and $\Delta  f\in B,$ we have
$$
\Big\la \,\frac{W(t)f-f}{t} - \Delta  f,\varphi \, \Big\ra = \Big\la
\,f,\,\frac{W(t)\varphi  -\varphi  }{t} - \Delta  \varphi  \,\Big\ra
\to 0
$$
for all $\varphi  \in \CS.$  For $f\in \SD(\CA)$ with $\CA$ the
infinitesimal generator of $W(t)$

\noi
$\big\Vert  \,\frac{W(t)f-f}{t} -
\CA f\big\Vert  _B\to 0$ and $\big\Vert  \,\frac{W(t+s)f-W(s)f}{t} -
\CA W(s)f\big\Vert  _B\to 0.$  As $W(s) f\in \SD (\Delta  )$ and
$W(s) f \in \SD(\CA),$ we have
$ \us{t\to 0+}\lim\,\Big\la\,\frac{W(t+s)f-W(s)f}{t}\, - \Delta
W(s)f,\varphi  \,\Big\ra = 0$  and
$\us{t\to 0+}\lim\,\Big\la\,\frac{W(t+s)f-W(s)}{t}\, - \CA
W(s)f,\varphi  \,\Big\ra =0$ for all $\varphi  \in \CS \cap B^*,$
and hence $\Delta  W(s)f = \CA W(s)f$ for all $s>0.$

For $f\in \SD (\Delta  )$ we can now write
$$
\Vert  W(t+s)f -W(s)f\Vert  _B \le t\Vert  \CA W(s)f\Vert  _B =
t\Vert  \Delta  W(s)f\Vert  _B\le t\Vert  \Delta  f\Vert
_B\q\text{\rm for all}\q s>0,
$$
and using (\ref{Eq4.11}), $\Vert  W(t)f -f \Vert  _B\le t\Vert
\Delta  f\Vert  _B.$

Similarly, for $g\in \SD (\Delta  ^r)$
$$
\big\Vert  \big(W(t)-I\big)^r g\big\Vert  _B\le t^r\Vert  \Delta  ^r
g\Vert  _B.
$$
The above directly implies the inequality $\big\Vert
\big(W(t)-I\big)^r f\big\Vert  _B\le CK_{\Delta  ^r}(f,t^r)_B.$  The
proof of the inequality $K_{\Delta  ^r} (f,t^r)_B\le C\big\Vert
\big(W(t)-I\big)^rf\big\Vert  _B$ follows exactly the proof of
Theorem 5.1 in \cite{Di-Iv}, replacing $\CA$ by $\Delta  ,$ which as
it operates on $g=-\os r{\us{k=1}\sum} (-1)^k
\begin{binom}{r}{k}\end{binom} T(kmt)f,$ is the same.

For $B=L_p(\IR^d),$ \; $1\le p<\infty  ,$\; $\Delta  =\CA$ (see the
proof in \cite[Th.4.311, p.261]{Bu-Be}).

The inequality (\ref{Eq4.8}) now follows from Theorem \ref{Thm4.1} as
(\ref{Eq4.7}) implies $K_{\CA^r}(f,t^r)_B\approx K_{\Delta
^r}(f,t^r)_B$ for all $f,r$ and $t.$  The inequality (\ref{Eq4.9})
follows from (\ref{Eq4.8}) and the inequality
\begin{equation}\label{Eq4.12}
E_\lambda  (f)_B\le CK_{\Delta  ^r}(f,\lambda  ^{-2r})_B.
\end{equation}
The inequality (\ref{Eq4.12})
 was proved in \cite[(2.9), p.271]{Da-Di04} for $B=L_p(\IR^d),$
\; $1\le p\le \infty  .$  In fact, (\ref{Eq4.12}) follows for any
$B$ satisfying (\ref{Eq1.3}), as all we need in the proof of
\cite[271-272]{Da-Di04} is that the linear convolution operators
$R_{\lambda  ,\ell,b}f$ there satisfy
\begin{equation}\label{Eq4.13}
\Vert  R_{\lambda  ,\ell,b}f\Vert  _B\le C_1\Vert  f\Vert
_B\q\text{\rm and}\q \Vert  R_{\lambda  ,\ell,b} g-g\Vert  _B \le
C_2\,\frac{1}{\lambda  ^{2\ell}} \, \Vert  \Delta  ^\ell g\Vert
_B\,.
\end{equation}

We define $F=f*\varphi  $ with $\varphi  \in B^*$ and $\Vert
\varphi  \Vert  _{B^*} =1$ such that $\varphi  $ satisfies
$$
\begin{aligned}
\Vert  R_{\lambda  ,\ell,b}f\Vert  _B -\varepsilon  &\le \vert
R_{\lambda  ,\ell,b}F(0)\vert  \le \us x\sup\, \vert  R_{\lambda
,\ell,b}F(x)\vert  \\
&= \Vert  R_{\lambda  ,\ell,b}F\Vert  _\infty   \le C_1\Vert  F\Vert
 _\infty  \le C_1 \Vert  f\Vert  _B\,.
\end{aligned}
$$

Similarly, we obtain the second inequality of (\ref{Eq4.13}) using
$G=g*\varphi  .$
\end{proof}

\begin{remark}\label{Rem4.3}
For $L_p(\IR^d)$ a somewhat more general result than in Theorem
\ref{Thm4.2} was proved in \cite[Theorem 7.1]{Da-Di-Ti} using a
completely different method.  Here the proof is much simpler and
applies to a wide class of Orlicz spaces (see Section \ref{Sec3}),
and perhaps to other spaces that satisfy (\ref{Eq1.2}) with some
norm of $B$ that satisfies (\ref{Eq1.3}) at the same time.  Orlicz
spaces described in Section \ref{Sec3} satisfy (\ref{Eq1.2}) with
the same norm for which (\ref{Eq1.3}) is valid.
\end{remark}

\begin{remark}\label{Rem4.4}
Using the monotonicity of $K_{\Delta  ^{r+1}}(f,u)_B$ and of
$E_u(f)_B\,,$ one can obtain the following equivalent form of
(\ref{Eq4.8}) and (\ref{Eq4.9}), which may appear more traditional:
$$
K_{\Delta  ^r}(f,t^r)_B\ge Ct^r\Big\{\int^\infty_t u^{-rs} K_{\Delta
^{r+1}}(f,u^{r+1})^s_B\;\frac{du}{u}\Big\}^{1/s} \leqno (4.8)^\pr
$$
and
$$
K_{\Delta  ^r}(f,t^r)_B\ge Ct^r\Big\{\int^\infty_{t^{1/2}} u^{-2rs}
E_{u^{-1}}(f)^s_B\;\frac{du}{u}\,\Big\}^{1/s}.
\leqno(4.9)^\pr
$$
For $B= L_p(\IR^d),$ \; $1<p<\infty  ,$\, $\omega  ^r(f,t)_p$ and
$K_{\Delta  ^r}(f,t^r)_p$ given by (\ref{Eq1.5}) and (\ref{Eq4.7})
respectively satisfy $\omega  ^{2r}(f,t)_p\approx K_{\Delta
^r}(f,t^{2r})_p$ and hence (\ref{Eq4.8}) can take the form
$$
\omega  ^{2r}(f,t)_p\ge Ct^{2r}\Big\{\int^\infty_t u^{-2rs} \omega
^{2r+2}(f,u)^s_p\;\frac{du}{u}\,\Big\}^{1/s}, \q s=\max\,(p,2).
\leqno(4.8)^{\pr\pr}
$$

In fact, the result of Theorem \ref{Thm4.2} is given as an example
of use of Theorem \ref{Thm4.1}, and the same method can be used for
many semigroups that are given by positive convolution operators on
$\IR^d$ or $\IT^d,$\, $d=1,2,\dots\,.$

In the next section we will give applications relating to
holomorphic semigroups generated by multipliers.
\end{remark}

\section{Ces\`aro summability and holomorphic semigroups}\label{Sec5}
For the purpose of this section, $H_k$ are eigenspaces of a
self-adjoint operator $P(D),$ and $\lambda  _k$ the eigenvalues of
$P(D),$ satisfy $0\le \lambda  _k,$\; $\lambda  _k < \lambda
_{k+1}\,.$  Furthermore, for our space $B$ we assume that
$H_k\subset B,$ \, $H_k\subset B^*$ and that $\spa\;(\cup H_k)$ is
dense in $B.$  The expansion of $f$ is given by
\begin{equation}\label{Eq5.1}
f\sim \sum^\infty  _{k=0}\, P_k f
\end{equation}
where $P_k f$ is the projection of $f$ on $H_k$ in the $L_2$ sense
(see \cite[(2.2)]{Di98}).  It was shown in \cite{Da-Di05} that if
the Ces\`aro summability of some order $\ell$ is a contraction in
$B,$ that is
\begin{equation}\label{Eq5.2}
\Vert  C^\ell_n f\Vert  _B\le \Vert  f\Vert  _B
\end{equation}
for
\[
C_n^\ell f\equiv\frac1{A_n^\ell}\sum_{k=0}^\infty A_{n-k}^\ell P_kf
\quad\text{where}\quad A_m^\ell\equiv\frac{(m+\ell)!}{\ell!m!},
\]
then $T(t)f$ given by
\begin{equation}\label{Eq5.3}
T(t)f = \sum^\infty  _{k=0} e^{-kt} P_kf \q\text{\rm for} \q t>0
\q\text{\rm and} \q T(0)f=f
\end{equation}
is a holomorphic $C_0$ semigroup of contractions with its
infinitesimal generator given by
\begin{equation}\label{Eq5.4}
{\cal A} f\sim \sum^\infty  _{k=1} -kP_k f, \q \SD(\CA) =\{f\in B:
\exists\, g\in B\;\text{\rm such that}\;
P_k g =
-k P_k f\q\text{\rm for all}\q k\}.
\end{equation}

The following theorem will establish among other facts that the
positivity of $C^\ell_n f$ implies that it is a contraction in
Orlicz spaces with the Luxemburg norm as well as with the Orlicz norm.  We
remind the reader that if an operator is a contraction on a space with respect to
a given norm, it does not imply that it is a contraction with an
equivalent norm.

\begin{theorem}\label{Thm5.1}
Suppose $Of(x)$ is given by
\begin{equation}\label{Thm5.5}
Of(x) =\int_\Omega   f(y)G(x,y)w(y)dy
\end{equation}
where $G(x,y) = G(y,x) \ge 0,$\; $w(y)\ge 0$ and $\int_\Omega
G(x,y)w(y)dy = 1.$  Then $Of$ is a contraction with respect to the
Luxemburg norm given by
\begin{equation}\label{Eq5.6}
\Vert  f\Vert  _{O_L(\Phi)} = \inf\,\Big\{a\in \IR_+\,:\int_\Omega
\Phi\Big(\frac{\vert  f(x)\vert  }{a}\Big) w(x)dx\le 1\Big\}
\end{equation}
and with respect to the Orlicz norm given by
\begin{equation}\label{Eq5.7}
\Vert  f\Vert  _{O(\Phi)} =\sup\,\Big\{\int_\Omega  \vert  f(x)g(x)\vert
w(x)dx:\,\int_\Omega  \Psi (\vert  g(x)\vert  ) w(x)dx\le 1\Big\}
\end{equation}
where $\Phi$ and $\Psi$ are associate Young functions.
\end{theorem}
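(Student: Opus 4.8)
The plan is to exploit two elementary features of the operator $O$ from~\eqref{Thm5.5}: it is positive, so that $|Of(x)|\le O(|f|)(x)$ pointwise, and the symmetry $G(x,y)=G(y,x)$ turns the hypothesis $\int_\Omega G(x,y)w(y)dy=1$ into the companion identity $\int_\Omega G(x,y)w(x)dx=1$ for every $y$. In particular, for each fixed $x$ the kernel $G(x,y)w(y)dy$ is a probability measure in $y$, and $O1=1$.

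First I would prove the contraction for the Luxemburg norm. Fix $f$ with $a:=\Vert f\Vert_{O_L(\Phi)}<\infty$ (the case $a=\infty$ is vacuous), so $\int_\Omega \Phi(|f(y)|/a)w(y)\,dy\le 1$; this holds even when the infimum in~\eqref{Eq5.6} is not attained, by monotone convergence. Since $\Phi$ is convex and nondecreasing, Jensen's inequality applied to the probability measure $G(x,y)w(y)\,dy$ gives for a.e.\ $x$
\[
\Phi\Bigl(\frac{|Of(x)|}{a}\Bigr)\le \Phi\Bigl(\frac{O(|f|)(x)}{a}\Bigr)\le \int_\Omega \Phi\Bigl(\frac{|f(y)|}{a}\Bigr)G(x,y)w(y)\,dy .
\]
Integrating this against $w(x)\,dx$, using Tonelli's theorem (everything is nonnegative) and then $\int_\Omega G(x,y)w(x)\,dx=1$, I get $\int_\Omega \Phi(|Of(x)|/a)w(x)\,dx\le \int_\Omega \Phi(|f(y)|/a)w(y)\,dy\le 1$, hence $\Vert Of\Vert_{O_L(\Phi)}\le a=\Vert f\Vert_{O_L(\Phi)}$.

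For the Orlicz norm I would reduce to the Luxemburg case by duality with the associate function. Recall that $\{g:\int_\Omega\Psi(|g|)w\le 1\}$ is the closed unit ball of the Luxemburg $\Psi$-norm, so~\eqref{Eq5.7} may be read as $\Vert f\Vert_{O(\Phi)}=\sup\{\int_\Omega|fg|w:\Vert g\Vert_{O_L(\Psi)}\le 1\}$, and consequently $\int_\Omega|fg|w\le\Vert f\Vert_{O(\Phi)}\Vert g\Vert_{O_L(\Psi)}$ for all $f,g$. Given $g$ with $\Vert g\Vert_{O_L(\Psi)}\le1$, Tonelli's theorem and the symmetry of $G$ give
\[
\int_\Omega |Of(x)\,g(x)|\,w(x)\,dx\le \int_\Omega O(|f|)(x)\,|g(x)|\,w(x)\,dx=\int_\Omega |f(y)|\,O(|g|)(y)\,w(y)\,dy .
\]
Applying the Luxemburg estimate already proved, but now with $\Psi$ in place of $\Phi$, to the nonnegative function $|g|$ yields $\Vert O(|g|)\Vert_{O_L(\Psi)}\le \Vert g\Vert_{O_L(\Psi)}\le 1$, and then the displayed H\"older bound gives $\int_\Omega|f|\,O(|g|)\,w\le\Vert f\Vert_{O(\Phi)}$. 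Taking the supremum over all such $g$ yields $\Vert Of\Vert_{O(\Phi)}\le\Vert f\Vert_{O(\Phi)}$.

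There is no deep difficulty here; the points to handle with a little care are the measure-theoretic bookkeeping (applicability of Tonelli, a.e.\ finiteness of $O(|f|)$ so that Jensen is legitimate, and the monotone-convergence argument identifying $\{g:\int\Psi(|g|)w\le1\}$ with the Luxemburg unit ball), and the one genuine idea of the proof: the Orlicz-norm contraction is obtained not directly but from the Luxemburg-norm contraction for the complementary Young function via the H\"older duality $\int|fg|w\le\Vert f\Vert_{O(\Phi)}\Vert g\Vert_{O_L(\Psi)}$.
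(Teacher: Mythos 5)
Your proof is correct and follows essentially the same route as the paper: Jensen's inequality with the probability kernel $G(x,\cdot)w(\cdot)$ for the Luxemburg part, and the symmetry of $G$ together with Jensen applied to the complementary Young function $\Psi$ for the Orlicz part. The only (cosmetic) difference is that where you invoke the already-proved Luxemburg contraction for $\Psi$ plus the H\"older duality $\int_\Omega|fh|w\le\Vert f\Vert_{O(\Phi)}\Vert h\Vert_{O_L(\Psi)}$, the paper simply checks the modular inequality $\int_\Omega\Psi(O(|g|))\,w\le\int_\Omega\Psi(|g|)\,w\le1$ directly, which shows $O(|g|)$ is an admissible test function in the definition of $\Vert\cdot\Vert_{O(\Phi)}$ without passing through the $\Psi$-Luxemburg norm.
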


\begin{proof}
For $a\in \IR_+\,,$ which is close to the infimum in (\ref{Eq5.6}), we
write
$$
\int_\Omega  \Phi\Big(\Big\vert\,\frac 1a\, Of(x)\Big\vert
\Big)w(x)dx \le
\int_\Omega  \Phi\Big(\,\frac 1a\int_\Omega  \vert  f(y)\vert
G(x,y)w(y)dy\Big)w(x)dx \equiv I.
$$
Using Jensen's inequality, the convexity of $\Phi$ and $\int
G(x,y)w(y)dy =1,$ we have
$$
\begin{aligned}
I &\le \int_\Omega  \int_\Omega  \Phi\Big(\,\frac{\vert  f(y)\vert
}{a}\,\Big) G(x,y)w(y)dy\,w(x)dx\\
&= \int_\Omega  \,\Phi\Big(\,\frac{\vert  f(y)\vert  }{a}\,\Big)
w(y)dy \int_\Omega  G(x,y)w(x)dx\\
&= \int_\Omega  \Phi\Big(\,\frac{\vert  f(y)\vert  }{a}\,\Big)w(y)dy,
\end{aligned}
$$
which completes the proof for the Luxemburg norm of the Orlicz
space. We now write
$$
\begin{aligned}
\int_\Omega  \vert  Of(x)\vert  \,\vert  g(x)\vert  w(x)dx
&\le \int_\Omega  \vert  g(x)\vert  \int_\Omega  G(x,y) \vert
f(y)\vert  w(y)dy\,w(x)dx\\
&= \int_\Omega  \vert  f(y)\vert w(y)  \,\int_\Omega\vert  g(x)\vert  G(x,y)w(x)dx\,dy.
\end{aligned}
$$
As $\Psi  $ is also a Young function and is convex, we have
$$
\int_\Omega  \Psi \Big\{\int_\Omega  \vert  g(x)\vert
G(x,y)w(x)dx\Big\}w(y)dy \le \int_\Omega  \Psi \big(\vert  g(y)\vert
 \big) w(y)dy \le 1,
$$
and hence our result follows.
\end{proof}

For $L_p(\Omega  )$ with weight $w(x)\ge 0$ the proof is easier as
it follows directly from H\"older's inequality, but the result for
$L_p$ is included in the more intricate proof of Theorem
\ref{Thm5.1}.

Clearly, the positivity of the Ces\`aro summability in the above
context implies that
$$
C^\ell _n f(x) =\int_\Omega  f(y)G_{n,\ell}(x,y)w(y)dy
$$
where $G_{n,\ell}(x,y) = G_{n,\ell}(y,x),$\; $G_{n,\ell}(x,y)\ge 0,$
\; $w(y)\ge 0,$ and when $1\in H_0\,,$ also $\int G_{n,\ell}(x,y)w(y)dy
= 1.$

\begin{theorem}\label{Thm5.2}
Suppose $H_k,$ $\lambda  _k$ and $P_kf$ are as described at the
beginning of this section, $B$ is an Orlicz space which satisfies
{\rm (\ref{Eq1.2})} (for some $s,\; 2\le s<\infty  )$ with a
Luxemburg norm or Orlicz norm, $C^\ell_n$ is positive for some
$\ell,$  $1\in H_0$ and $\lambda  _k$ is a polynomial in $k$ of
degree $b.$  Then
\begin{equation}\label{Eq5.8}
K_{\CA^r}(f,t^r)_B\approx \,\inf\,\big\{\Vert  f-g\Vert  _B + t^r \Vert
P(D)^{r/b}g\Vert  _B: \,g\in \SD\big(P(D)^{r/b}\big)\big\},
\end{equation}
\begin{equation}\label{Eq5.9}
K_{\CA^r}(f,t^r)_B\ge C\Big\{\sum^\infty_{j=1}
2^{-jrs}K_{\CA^{r+1}}(f,t^{r+1}2^{j(r+1)})^s_B\Big\}^{1/s}
\end{equation}
and
\begin{equation}\label{Eq5.10}
K_{\CA^r}(f,2^{-nr})_B \ge C\Big\{\sum^n_{j=1} 2^{-jrs}
E_{2^{n-j}}(f)_B^s\Big\}^{1/s}
\end{equation}
where
$$
E_n(f) = \,\inf\,\Big\{\Vert  f-\varphi  \Vert  _B:\,\varphi  \in
\;\spa\;\bigcup^n_{k=0} \,H_k\Big\}.
$$
\end{theorem}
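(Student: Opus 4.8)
The plan is to deduce Theorem~\ref{Thm5.2} from the machinery already assembled, treating~\eqref{Eq5.8}, \eqref{Eq5.9}, and~\eqref{Eq5.10} in turn. First I would establish the characterization~\eqref{Eq5.8}. By Theorem~\ref{Thm5.1} and the observation immediately preceding the theorem, the positivity of $C^\ell_n$ (together with $1\in H_0$) gives $\|C^\ell_n f\|_B\le\|f\|_B$, so~\eqref{Eq5.2} holds. Hence by the result of~\cite{Da-Di05} quoted in~\eqref{Eq5.3}--\eqref{Eq5.4}, $T(t)f=\sum_k e^{-kt}P_k f$ is a holomorphic $C_0$ semigroup of contractions on $B$ with generator $\CA f\sim\sum_k -kP_k f$. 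Applying the strong converse inequality~\eqref{Eq4.3} (valid for any holomorphic $C_0$ semigroup of contractions) yields $\|(T(t)-I)^r f\|_B\approx K_{\CA^r}(f,t^r)_B$. It then remains to identify $\SD(\CA^r)$ and $\CA^r$ in terms of $P(D)$: since $\lambda_k$ is a polynomial in $k$ of degree $b$ with positive leading behavior, $P(D)$ acts as $P(D)f\sim\sum_k\lambda_k P_k f$, and $\CA^r f\sim\sum_k(-k)^r P_k f$, so $\CA^r$ and $P(D)^{r/b}$ differ by a bounded, boundedly invertible multiplier operator on $B$ (the multiplier sequence $k^r/\lambda_k^{r/b}\to$ const). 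Consequently $K_{\CA^r}(f,t^r)_B$ is equivalent to the $K$-functional on the right of~\eqref{Eq5.8}; this equivalence of $K$-functionals under comparable operators is standard.

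Next, \eqref{Eq5.9} follows directly from Theorem~\ref{Thm4.1}: the hypotheses there --- $\{T(t)\}$ a holomorphic $C_0$ semigroup of contractions and $B$ satisfying~\eqref{Eq1.2} --- are exactly what we have verified, so~\eqref{Eq4.5} gives~\eqref{Eq5.9} verbatim (with the $K_{\CA^{r+1}}$ on the right, which by the $r+1$ case of~\eqref{Eq5.8} could also be rewritten in terms of $P(D)^{(r+1)/b}$ if desired, though the stated form is already in $K_{\CA^{r+1}}$).

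Finally, for~\eqref{Eq5.10} I would combine~\eqref{Eq5.9} (specialized to $t=2^{-n}$, or rather to $t^r=2^{-nr}$) with a Jackson-type estimate bounding $E_N(f)_B$ by $K_{\CA^{r+1}}(f,N^{-(r+1)})_B$ (up to a constant). Such an estimate is the analogue of~\eqref{Eq4.12} in the present setting: the near-best approximant from $\spa\bigcup_{k=0}^N H_k$ can be produced by a de~la~Vall\'ee-Poussin-type operator built from the Ces\`aro means $C^\ell_n$, which by Theorem~\ref{Thm5.1} is a contraction on $B$ and reproduces $H_k$ for $k\le N$ while annihilating high frequencies; one then estimates $\|f-g\|_B$ plus the error term by the $K$-functional exactly as in the argument of~\cite{Da-Di04} reproduced in the proof of Theorem~\ref{Thm4.2}, using only boundedness of the relevant convolution/multiplier operators on $B$ (which holds because $B$ satisfies~\eqref{Eq1.2} and the operators are contractions in the relevant norm). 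Substituting $N=2^{n-j}$ and relabeling the sum converts~\eqref{Eq5.9} into~\eqref{Eq5.10}; monotonicity of $E_N(f)_B$ in $N$ handles the conversion between the continuous and dyadic forms of the sum. The main obstacle is this last Jackson estimate $E_N(f)_B\le CK_{\CA^{r+1}}(f,N^{-(r+1)})_B$: one must check that the requisite summability/multiplier operators realizing near-best approximation are bounded on the Orlicz space $B$, which is precisely where the positivity hypothesis on $C^\ell_n$ and Theorem~\ref{Thm5.1} do the work, together with the fact that $\lambda_k$ being a degree-$b$ polynomial makes the frequency-localization cutoffs behave like classical dyadic blocks.
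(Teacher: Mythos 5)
Your overall strategy matches the paper's: Theorem~\ref{Thm5.1} makes $C^\ell_n$ a contraction, so by the cited result of~\cite{Da-Di05} the semigroup~\eqref{Eq5.3} is a holomorphic $C_0$ semigroup of contractions; \eqref{Eq5.9} is then indeed an immediate application of Theorem~\ref{Thm4.1}; and \eqref{Eq5.10} is obtained by combining \eqref{Eq5.9} with a Jackson-type bound $E_n(f)_B\le C K_{\CA^{r+1}}(f,n^{-(r+1)})_B$, exactly the inequality the paper records as~\eqref{Eq5.11} (citing~\cite[Th.4.1]{Di98} and~\cite[Section~4]{Da-Di07}).

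The one genuine gap is in your treatment of~\eqref{Eq5.8}. You assert that because the scalar sequence $k^r/\lambda_k^{r/b}$ tends to a constant, the corresponding multiplier operator is bounded and boundedly invertible on $B$, so $\CA^r$ and $P(D)^{r/b}$ yield equivalent $K$-functionals ``by a standard argument.'' A bounded (even convergent) multiplier sequence does not by itself define a bounded operator on a general Banach space of expansions $\sum P_k f$; one needs a multiplier theorem, and in this framework the relevant one is precisely what the boundedness of Ces\`aro means of some order buys you, packaged as the realization result of~\cite[Th.6.2, Th.7.1]{Di98} and carried out for the operator pair $\CA^r$, $P(D)^{r/b}$ in~\cite[Th.4.3, Section~4]{Da-Di07}. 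You correctly flag this issue when discussing~\eqref{Eq5.10} (``one must check that the requisite summability/multiplier operators... are bounded''), but you wave it away for~\eqref{Eq5.8}, which is where the paper explicitly leans on those citations. If you insert the same appeal to the Ces\`aro/realization machinery (noting also that, since $\lambda_k$ is a positive polynomial of degree $b$, the symbol $k^r/\lambda_k^{r/b}$ has the bounded-variation/Marcinkiewicz regularity needed, not merely a finite limit), your argument closes the gap and coincides with the paper's.
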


\begin{proof}
The proof of (\ref{Eq5.8}) follows the proof in \cite[Th.4.3, p.83]{Da-Di07} where the result is proved for $L_p$ spaces. In fact,
the same proof works for Banach spaces $B$ for which some order of
the Ces\`aro summability is bounded, which implies the realization
result (see \cite[Th.6.2 and Th.7.1]{Di98}, and that result is the
key ingredient for the proof in \cite[Th.4.3]{Da-Di07}.  We now show
\begin{equation}\label{Eq5.11}
E_n(f)_B \le C\,\inf\,\big(\Vert  f-g\Vert  _B +
n^{-r-1} \Vert  P(D)^{(r+1)/b} g\Vert  _B\big)
\approx K_{\CA^{r+1}}\big(f,n^{-r-1}\big)_B.
\end{equation}
The first inequality of (\ref{Eq5.11}) follows from
\cite[Th.4.1]{Di98} when we recall that $\lambda  _k\ge 0, $ and $\lambda
_k\approx k^b,$ (essentially $P(D),$ \,$(r+1)/b$ and $n^b$ are
$-P(D),$ $\alpha  m$ and $\lambda  $ respectively in \cite{Di98}).
The second equivalence is treated in detail in \cite[Section 4]{Da-Di07}. Using (\ref{Eq5.11}), we may deduce (\ref{Eq5.10}) from
(\ref{Eq5.9}), which in turn is a direct application of
(\ref{Eq4.5}).
\end{proof}

\begin{remark}\label{Rem5.2}
Similar to what we stated in Remark \ref{Rem4.4}, one can give a
different form of (\ref{Eq5.9}) and (\ref{Eq5.10}).  For example, we
have
$$
K_{\CA^r}(f,t^r)_B\ge Ct^r\Big\{\sum^{[1/t]}_{n=1} \,
n^{-rs-1}E_n(f)^s_B\Big\}^{1/s}. \leqno(5.10)^{\pr}
$$
\end{remark}

\section{Sharp Jackson theorem for polynomials on a simplex}
\label{Sec6}
For the simplex $S\in \IR^d$
\begin{equation}\label{Eq6.1}
S = \Big\{\bx = (x_1,\dots,x_d):\, x_i \ge 0\q x_0=1-\sum^d_{i=1} x_i
  \ge 0\Big\},
\end{equation}
the Jacobi weight is given by
\begin{equation}\label{eq6.2}
W_{\balpha  }(\bx) = x^{\alpha  _0}_0\dots x^{\alpha  _d}_d, \q
\balpha
=
(\alpha  _0,\dots,\alpha  _d), \q \alpha  _i >\,-\,\frac 12\,.
\end{equation}
The self-adjoint differential operator (see \cite[p.226]{Di95}) on
$S$ with weight $w_{\balpha}(\bx)$ is given  by
\begin{equation}\label{Eq6.3}
P_{\balpha} (D) = -\sum_{\pmb{\xi}  \in E_S} w_{\balpha} (\bx)^{-1}
\;\pd{}{\pmb{\xi} }\,\wt d(\pmb{\xi}  ,\bx)w_\balpha
(\bx)\,\pd{}{\pmb{\xi}  } \equiv
\sum_{\pmb{\xi}  \in E_S} P_{\balpha  ,\pmb{\xi}  }(D)
\end{equation}
where $E_S$ is the set of directions parallel to the edges of $S,$
and $\wt d(\pmb{\xi}  ,\bx)$ is given by
\begin{equation}\label{Eq6.4}
\wt d(\pmb{\xi}  ,\bx) =\us{\os{\lambda  \ge 0}{\us{\bx +\lambda
\pmb{\xi} \in S}{}}}\sup\, d(\bx,\bx
+\lambda  \pmb{\xi}  ) \us{\stackrel{\lambda  \ge 0}{\bx -\lambda
\pmb{\xi}
\in S}}\sup\, d(\bx,\bx-\lambda  \pmb{\xi} )
\end{equation}
using the Euclidean distance $d(\bx,\by).$

For  $\Pi_k$ the polynomials of total degree $\le k$ we have $\Pi_k
= H_0 \oplus \dots \oplus H_k$ where
\begin{equation}\label{Eq6.5}
P_{\balpha }(D)\varphi  =\ell\Big(\ell+ d +\sum^d_{i=0}
\alpha  _i\Big)\varphi  \equiv \lambda  _\ell\varphi  , \q \varphi  \in H_\ell.
\end{equation}
Defining the $K\text{\rm -functional}$ on $S$ by
\begin{equation}\label{Eq6.7}
K_r \big(f,P_{\balpha}(D)^{r/2},t^r\big)_B = \inf\Big(\Vert  f-g\Vert
_B +t^r\Vert  P_\balpha (D)^{r/2}g\Vert  _B\,:\,g\in \SD\big(P_\balpha
(D)^{r/2}\big)\Big)
\end{equation}
where for $\beta  \in [0,\infty  )$\; $P_{\balpha}(D)^\beta  $ is
given for $\beta  >0$ by

\begin{equation} \label{Eq6.8}
P_{\balpha}(D)^\beta  f =\sum^\infty  _{\ell=1} \lambda  ^\beta
_\ell P_\ell f, \q f\in \SD\big(P_{\balpha}(D)^\beta  \big) \q
\text{\rm if} \q \exists\, \psi\in B\q P_\ell \psi = \lambda  ^\beta
_\ell P_\ell f
\end{equation}
where $P_\ell \varphi  $ is the $L_2$ projection of $\varphi  $ onto
$H_\ell\,.$
We can now deduce the sharp Jackson inequality for polynomials and
lower estimate for $K\text{\rm -functionals}$ on the simplex.

\begin{theorem}\label{Thm6.1}
Suppose $B$ is a weighted $L_p$ or an Orlicz space on the simplex
$S$ satisfying {\rm (\ref{Eq1.2})} for some $2\le s<\infty  .$  Then
\begin{equation}\label{Eq6.9}
K_r\big(f,P_{\bal}(D)^{r/2},t^r\big)_B \ge C\Big\{\sum^\infty_{j=1}
2^{-jrs}K_{r+1}\Big(f,P_{\bal}(D)^{(r+1)/2},
t^{r+1}2^{j(r+1)}\Big)^s_B  \Big\}^{1/s}
\end{equation}
and
\begin{equation}\label{Eq6.10}
K_r\big(f,P_{\bal}(D)^{r/2},2^{-nr}\big)_B\ge C_1
2^{-nr}\Big\{\sum^n_{j=0} 2^{j rs}E_{2^j}(f)^s_B\Big\}^{1/s}
\end{equation}
where $S,\,P(D),\, K_r\big(f,P_{\bal}(D)^{r/2},t\big)_B$ and
$P_{\bal}(D)^{r/2}$ are given by {\rm (\ref{Eq6.1}), (\ref{Eq6.3}),
(\ref{Eq6.7})} and {\rm (\ref{Eq6.8})} respectively and $E_n (f)_B$ is given by
\begin{equation}\label{Eq6.11}
E_n (f)_B =\;\inf\,(\Vert  f-P\Vert  _B:\, P\in \Pi_n).
\end{equation}
\end{theorem}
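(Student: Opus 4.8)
The plan is to reduce Theorem~\ref{Thm6.1} to the semigroup result of Theorem~\ref{Thm5.1}--\ref{Thm5.2} (or directly to Theorem~\ref{Thm4.1}), by verifying the hypotheses there in the present setting. The essential structural facts are: (i) $P_\balpha(D)$ is a self-adjoint operator on $S$ with the weight $W_\balpha$, whose eigenspaces $H_k$ satisfy $\spa(\bigcup H_k)$ dense in $B$ and $H_k\subset B\cap B^*$; (ii) the eigenvalues $\lambda_\ell=\ell(\ell+d+\sum\alpha_i)$ form a polynomial in $\ell$ of degree $b=2$; (iii) the Ces\`aro means $C_n^\ell$ of the expansion~\eqref{Eq5.1} associated with $P_\balpha(D)$ are \emph{positive} for some order $\ell$, hence by Theorem~\ref{Thm5.1} are contractions on $B$ in both the Luxemburg and the Orlicz norm; and (iv) $1\in H_0$, so that $\int G_{n,\ell}(\bx,\by)W_\balpha(\by)d\by=1$. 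Facts (i), (ii), (iv) are classical for Jacobi-type operators on the simplex (see~\cite{Di95}, \cite{Di98}); fact (iii) is the positivity of Ces\`aro summability for Jacobi expansions on the simplex, which is known for $\ell$ large enough depending on $\balpha$ and $d$.

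Granting these, the argument proceeds as follows. First, by Theorem~\ref{Thm5.1} the operator $O=C_n^\ell$ is a contraction on $B$, so by~\cite{Da-Di05} the family $T(t)f=\sum_{k\ge0}e^{-\lambda_k t/\lambda_1\text{-normalization}}P_kf$ (or more precisely the semigroup built from $P_\balpha(D)$ as in~\eqref{Eq5.3}, with generator $\CA$ as in~\eqref{Eq5.4}) is a holomorphic $C_0$ semigroup of contractions on $B$; since $B$ satisfies~\eqref{Eq1.2} with the relevant $s$, Theorem~\ref{Thm4.1} applies and yields
\[
K_{\CA^r}(f,t^r)_B\ge C\Big\{\sum_{j=1}^\infty 2^{-jrs}K_{\CA^{r+1}}(f,2^{j(r+1)}t^{r+1})^s_B\Big\}^{1/s}.
\]
Second, since $\lambda_\ell\approx \ell^2$, formula~\eqref{Eq5.8} of Theorem~\ref{Thm5.2} identifies $K_{\CA^r}(f,t^r)_B$ with $K_r(f,P_\balpha(D)^{r/2},t^r)_B$ up to equivalence (here $b=2$, so $r/b=r/2$), and likewise for $r+1$. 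Substituting these equivalences into the displayed inequality gives~\eqref{Eq6.9}. Third, for~\eqref{Eq6.10} I would invoke~\eqref{Eq5.10}--\eqref{Eq5.11}: the Jackson-type estimate $E_n(f)_B\le CK_{r+1}(f,P_\balpha(D)^{(r+1)/2},n^{-(r+1)})_B$ (first inequality of~\eqref{Eq5.11}, using $\lambda_k\ge0$, $\lambda_k\approx k^b$) combined with~\eqref{Eq6.9} at $t=2^{-n}$, after reindexing the sum and absorbing the monotonicity of $E_m(f)_B$, produces
\[
K_r\big(f,P_\balpha(D)^{r/2},2^{-nr}\big)_B\ge C_1 2^{-nr}\Big\{\sum_{j=0}^n 2^{jrs}E_{2^j}(f)^s_B\Big\}^{1/s},
\]
which is~\eqref{Eq6.10}; note the role of $E_n(f)_B=\inf\{\Vert f-P\Vert_B:P\in\Pi_n\}$ and $\Pi_k=H_0\oplus\dots\oplus H_k$.

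The main obstacle is verifying hypothesis (iii), the positivity of some Ces\`aro mean $C_n^\ell$ of the Jacobi-type expansion on the simplex in the \emph{weighted $L_p$ / Orlicz} setting: one needs a positive kernel representation $C_n^\ell f(\bx)=\int_S f(\by)G_{n,\ell}(\bx,\by)W_\balpha(\by)d\by$ with $G_{n,\ell}\ge0$ and symmetric. For the one-dimensional Jacobi case and its simplex generalization this positivity is classical but delicate (Gasper-type estimates), and it is the input that lets Theorem~\ref{Thm5.1} convert positivity into contractivity in every Orlicz norm simultaneously; once that is in hand, everything else is bookkeeping with the $K$-functional equivalences from Sections~\ref{Sec4} and~\ref{Sec5}. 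A secondary technical point is ensuring that the norm on $B$ for which~\eqref{Eq1.2} holds is the \emph{same} norm (Luxemburg or Orlicz) for which $C_n^\ell$ is a contraction — but Theorem~\ref{Thm5.1} was stated precisely so that positivity gives contractivity for both, and Lemma~\ref{Lem3.6} arranges an equivalent Young function for which~\eqref{Eq1.2} holds, with the conclusion then transferring to any equivalent norm as noted in Remark~\ref{Rem3.4}.
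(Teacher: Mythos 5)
Your proposal is correct and follows essentially the same route as the paper: invoke the positivity of Cesàro summability on the simplex (the paper cites \cite[Cor.~7.4.2]{Du-Xu}), pass via Theorem~\ref{Thm5.1} to contractivity in the Luxemburg/Orlicz norm, apply Theorem~\ref{Thm5.2} with $b=2$ to obtain~\eqref{Eq6.9}, and then use the Jackson-type bound $E_n(f)_B\le CK_{r+1}(f,P_{\bal}(D)^{(r+1)/2},n^{-r-1})_B$ (the paper cites \cite[Th.~6.1]{Di98}) to deduce~\eqref{Eq6.10}. The only minor discrepancy is the transient mis-statement of the semigroup as $e^{-\lambda_k t}$ rather than $e^{-kt}$, which you immediately correct by referring back to~\eqref{Eq5.3}.
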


\begin{proof}
We follow \cite[Cor.7.4.2, p.273]{Du-Xu}, which implies the
positivity of the Ces\`aro summability $C^\delta  _n\,,$ provided that
$\delta  $ is large enough.  The use of Theorem \ref{Thm5.2} will
complete the proof of (\ref{Eq6.9}), when we recall that $\lambda
_\ell = \ell \big(\ell + d +\os d{\us{i=0}\sum}\, \alpha  _i\big)$
is a polynomial of degree $b=2$ in $\ell.$  The proof of
(\ref{Eq6.10}) follows from the boundedness of the Ces\`aro
summability which implies (see \cite[Th.6.1]{Di98})
\begin{equation}\label{Eq6.12}
E_n(f) _B\le CK_{r+1}\big(f,P_{\bal}(D)^{(r+1)/2}, n^{-r-1}\big)_B
\end{equation}
and hence (\ref{Eq6.10}) can be deduced from (\ref{Eq6.9}).
\end{proof}

For $d=1$ and $B=L_p$ with Jacobi weights, Theorem \ref{Thm6.1} was
proved in \cite[Th.6.1]{Da-Di-Ti}.

\section{Sharp Jackson inequality on the sphere}\label{Sec7a}

The result of this section was proved for $L_p(S^{d-1}),$ \,
$1<p<\infty  $ in \cite[Th.8.1]{Da-Di-Ti}.  Here we will give
an alternative proof which yields an extension to a class of Banach
spaces that include many Orlicz spaces.

The Laplace-Beltrami
operator $\wt\Delta  $ on the unit sphere $S^{d-1} =\{\bx \in
\IR^d:\vert  \bx\vert  ^2 \equiv x^2_1 +\dots + x^2_d=1\}$ is given by
\begin{equation}\label{Eq7a.1}
\wt \Delta  f(\bx) = \Delta F(\bx), \q \bx\in S^{d-1} \q
\text{where } F(\bx)= f\Big(\frac{\bx}{\vert  \bx\vert
}\Big) \text{ and } \Delta  =
\pd{^2}{x^2_1} + \dots +\pd{^2}{x^2_d}\,.
\end{equation}
The eigenspace $H_k$ of spherical harmonic polynomials of degree $k$
on $S^{d-1}$ is given by
\begin{equation}\label{Eq7a.2}
H_k = \{\varphi  :\wt\Delta  \varphi  = -k(k+d-2)\varphi  \}, \q
\lambda  _k \equiv k(k+d-2).
\end{equation}
For a Banach space of functions on $S^{d-1}$ the $K\text{\rm
-functional} \; \wt K_r\big(f,(-\wt \Delta  )^{r/2},t^r\big)_B$ is
given by
\begin{equation}\label{Eq7a.3}
K_r\big(f,(-\wt\Delta  \big)^{r/2},t^r\big)_B = \,\text{\rm inf}\,
\big\{\Vert  f-g\Vert  _B + t^{r}\Vert  (-\wt \Delta  )^{r/2}g\Vert
_B:g\in \SD\big((-\wt \Delta  )^{r/2}\big)\big\}
\end{equation}
where
\begin{equation}\label{Eq7a.4}
\begin{gathered}
(-\wt \Delta  )^{r/2}f \sim \sum^\infty  _{\ell=1}\big(\ell (\ell
+d-2)\big)^{r/2}P_\ell f,\,\\ f\in \SD\big((-\wt \Delta
)^{r/2}\big)\q
\text{\rm if} \q \exists\;\psi\in B\q\text{\rm
satisfying}\q P_\ell \psi = \lambda  ^{r/2}_\ell P_\ell f
\end{gathered}
\end{equation}
and $P_\ell \varphi  $ is the $L_2$ projection of $f$ on $H_\ell\,.$

We can now state and prove the result of this section.

\begin{theorem}\label{Thm7a.1}
Suppose that $B$ is an Orlicz space of functions on $S^{d-1}$ satisfying
{\rm (\ref{Eq1.2})} for some $2\le s\le\infty  ,$ and for $\rho\in
SO(d)$
\begin{equation}\label{Eq7.5n}
\Vert  f(\rho \,\cdot\,)\Vert  _B = \Vert  f(\,\cdot\,)\Vert  _B, \q
\Vert  f(\rho\,\cdot\,) - f(\,\cdot\,)\Vert  _B \to 0\q\text{\rm
as}\q \vert  \rho - I\vert  \to 0,
\end{equation}
where $|\rho - I|=\max\{|\rho \bx-\bx|: \bx\in S^{d-1}\}$.
Then for $r=1,2,\dots$
\begin{equation}\label{Eq7a.5}
K_{r}\big(f,(-\wt\Delta  )^{r/2},t^r\big)_B\ge C\Big\{\sum^\infty_{j=1}
\,2^{-jrs}K_{r+1} \big(f,(-\wt \Delta
)^{(r+1)/2},t^{r+1}2^{j(r+1)}\big)^s_B\Big\}^{1/s}
\end{equation}
and
\begin{equation}\label{Eq7a.6}
K_{r}\big(f,(-\wt \Delta  )^{r/2},2^{-nr}\big)_B \ge
C_1\,2^{-nr}\Big\{\sum^n_{j=0} 2^{j rs}E_{2^j}(f)^s_B\Big\}^{1/s}
\end{equation}
where $\wt \Delta  ,$ \, $K_{r}\big(f,(-\wt \Delta
)^{r/2},t^r\big)_B$and $(-\wt \Delta  )^{r/2}$ are given by
{\rm (\ref{Eq7a.1}), (\ref{Eq7a.3})} and {\rm (\ref{Eq7a.4})} respectively, and
$E_n(f)_B$ is given by
\begin{equation}\label{Eq7a.7}
E_n(f)_B = \;\text{\rm inf}\;\Big(\Vert  f-P\Vert  _B: P \in \;\text{\rm span}\; \bigcup^{n-1}_{k=0} H_k\Big)
\end{equation}
with $H_k$ of \rm{(\ref{Eq7a.2})}.
\end{theorem}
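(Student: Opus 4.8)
The plan is to obtain both \eqref{Eq7a.5} and \eqref{Eq7a.6} by reducing to Theorem~\ref{Thm5.2} once the spherical setting is matched to the framework of Section~\ref{Sec5}. Here the eigenspaces $H_k$ are the spherical harmonics of degree $k$ from~\eqref{Eq7a.2}, the self-adjoint operator is $P(D)=-\wt\Delta$, and its eigenvalues $\lambda_k=k(k+d-2)$ form a polynomial in $k$ of degree $b=2$; the constants lie in $H_0$, each $H_k$ is a finite-dimensional space of polynomials so $H_k\subset B$ and $H_k\subset B^*$, and since~\eqref{Eq1.2} forces $B$ to be uniformly convex, hence reflexive, hence (being an Orlicz space) subject to the $\Delta_2$ condition, $\spa(\bigcup_k H_k)$ is dense in $B$. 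Thus the only hypothesis of Theorem~\ref{Thm5.2} that is not immediate is the positivity of the Ces\`aro means $C^\delta_n$ of spherical harmonic expansions, together with their being contractions on $B$.

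For positivity I would invoke the classical fact that the Ces\`aro kernel of order $\delta$ for spherical harmonic (ultraspherical) expansions on $S^{d-1}$ is nonnegative once $\delta$ is sufficiently large --- the spherical counterpart of the estimate \cite[Cor.7.4.2, p.273]{Du-Xu} used for the simplex in Theorem~\ref{Thm6.1}. By~\eqref{Eq7.5n}, rotations act isometrically on $B$, so $C^\delta_n$ is a rotation-invariant averaging operator and hence has the form
\[
C^\delta_n f(x)=\int_{S^{d-1}}f(y)\,G_{n,\delta}(x,y)\,d\sigma(y),\qquad G_{n,\delta}(x,y)=G_{n,\delta}(y,x)\ge 0,
\]
with $\int_{S^{d-1}}G_{n,\delta}(x,y)\,d\sigma(y)=1$ because $C^\delta_n$ reproduces $H_0\ni 1$. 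Theorem~\ref{Thm5.1} then gives that $C^\delta_n$ is a contraction on $B$ in both the Luxemburg and the Orlicz norm, i.e.~\eqref{Eq5.2} holds.

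With~\eqref{Eq5.2} available, the result of \cite{Da-Di05} recalled at the start of Section~\ref{Sec5} produces the holomorphic $C_0$ semigroup of contractions $T(t)f=\sum_k e^{-kt}P_k f$ with generator $\CA f\sim\sum_k(-k)P_k f$; its strong continuity and the realization results of \cite{Di98} underlying Theorem~\ref{Thm5.2} are exactly where the rotation hypotheses~\eqref{Eq7.5n} enter. Applying Theorem~\ref{Thm5.2} with $P(D)=-\wt\Delta$ and $b=2$, the equivalence~\eqref{Eq5.8} reads $K_{\CA^r}(f,t^r)_B\approx K_r\big(f,(-\wt\Delta)^{r/2},t^r\big)_B$, with the right-hand side exactly~\eqref{Eq7a.3}--\eqref{Eq7a.4}. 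Inserting this equivalence (for $r$ and for $r+1$) into~\eqref{Eq5.9} yields~\eqref{Eq7a.5}; inserting it into~\eqref{Eq5.10} and reindexing the sum by $i=n-j$ (so $2^{-jrs}=2^{-nrs}2^{irs}$) yields~\eqref{Eq7a.6}. Equivalently, \eqref{Eq7a.6} can be deduced from \eqref{Eq7a.5} and the Jackson-type inequality $E_n(f)_B\le CK_{r+1}\big(f,(-\wt\Delta)^{(r+1)/2},n^{-r-1}\big)_B$, which follows from the boundedness of $C^\delta_n$ via \cite[Th.6.1]{Di98}, in the same way \eqref{Eq6.10} was deduced from \eqref{Eq6.9}; the discrepancies in the ranges of the sums and between the two conventions for $E_n(f)_B$ (degree $\le n$ versus degree $\le n-1$) are absorbed into the constant.

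The main obstacle is the genuinely new input: establishing the positivity of $C^\delta_n$ on $S^{d-1}$ for some finite order $\delta$, and confirming that for an Orlicz space $B$ this positivity, the contraction property furnished by Theorem~\ref{Thm5.1}, the condition~\eqref{Eq1.2}, and the rotation conditions~\eqref{Eq7.5n} can be arranged for one and the same norm. Once that is in hand, the rest of the argument is a dictionary translation of Theorems~\ref{Thm5.1} and~\ref{Thm5.2} into the Laplace--Beltrami setting, leaving only routine bookkeeping of indices and constants.
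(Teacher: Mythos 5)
Your proposal follows essentially the same route as the paper: positivity of the spherical Ces\`aro means for $\delta > d-1$ (the paper cites \cite[Cor.7.2.5,p.266]{Du-Xu}), extension of the contraction property to Orlicz norms, invocation of the holomorphic semigroup of \cite{Da-Di05} and Theorem~\ref{Thm5.2} with $P(D)=-\wt\Delta$, $b=2$, and then the Jackson-type inequality~\eqref{Eq7a.8} from \cite[Th.6.1]{Di98} to pass to~\eqref{Eq7a.6}. The one small divergence is that you invoke Theorem~\ref{Thm5.1} of this paper directly to obtain the Orlicz contraction, whereas the paper's own proof cites \cite[Th.2.1]{Di06}; both are sound and, if anything, your version is more self-contained. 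Your remark that the index mismatch between~\eqref{Eq5.10} (summing to $j=n$, smallest index $E_{2^0}$) and~\eqref{Eq7a.6} (summing to $j=n$, largest index $E_{2^n}$) is harmless is correct: after reindexing by $i=n-j$ one recovers $\sum_{i=0}^{n-1}$, and the extra $i=n$ term in~\eqref{Eq7a.6} contributes only $E_{2^n}(f)_B$, which is dominated by $K_r(f,(-\wt\Delta)^{r/2},2^{-nr})_B$ via the Jackson inequality, so it can indeed be absorbed into the constant.
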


We remind the reader that $SO(d)$ is the collection of $d\times d$ orthogonal
matrices whose determinant equals $1.$

\begin{proof}
We first recall that the Ces\`aro summability of order $\ell >d-1$
is a positive operator (see for instance
\cite[Cor.7.2.5,p.266]{Du-Xu}).  This already implies that $C^\ell_n$
is a contraction operator on $L_p(S^{d-1}).$  Furthermore, the above
and \cite[Th.2.1]{Di06} imply that $C^\ell_n$ is a contraction on
many other Banach spaces of functions on $S^{d-1},$ including all
Orlicz spaces.  We now use the semigroup given in (\ref{Eq5.3}) and
Theorem \ref{Thm5.2} to obtain (\ref{Eq7a.5}) when we observe that,
using the technique of \cite[Section 4]{Da-Di07},
$K_{\CA^r}(f,t^r)_B\approx K_r\big(f,(-\wt \Delta  )^{r/2},t^r\big)_B$ for
that semigroup for any Banach space $B$ for which the Ces\`aro
summability is bounded.  The inequality (\ref{Eq7a.6}) follows using
\cite[Th.6.1]{Di98}, which is applicable here as the Ces\`aro
summability is bounded and implies
\begin{equation}\label{Eq7a.8}
E_{2^k}(f)_B \le CK_{r+1}\big(f,(-\wt\Delta  )^{(r+1)/2},2^{-k(r+1)}\big)_B.
\end{equation}
\end{proof}

\section{Non-holomorphic semigroups and averaged moduli of
smoothness}\label{Sec7}
For a semigroup $\{T(u)\}_{u\ge 0}$ on a Banach space $B$ the
averaged moduli of smoothness are given by
\begin{equation}\label{Eq7.1}
\text{\rm w}^r_T(f,t)_B \equiv \frac 1t\;\int^t_0\,\Vert  \big(T(u) -
I\big)^rf\Vert  _Bdu.
\end{equation}
We recall that the moduli $\omega  ^r_T(f,t)_B$ are given by
\begin{equation}\label{Eq7.2}
\omega  ^r_T(f,t) _B\equiv \us{0\le u\le t}\sup\, \big\Vert
\big(T(u)-I\big)^rf\big\Vert  _B\,,
\end{equation}
and we have the following equivalence.

\begin{theorem}\label{Thm7.1}
Suppose $\{T(u)\}_{u\ge 0}$ is a $C_0$ semigroup of contractions on
a Banach space $B.$  Then
\begin{equation}\label{Eq7.3}
\text{\rm w}^r_T(f,t)_B\le \omega  ^r_T(f,t)_B \le C(r)\text{\rm w}^r_T(f,t)_B.
\end{equation}
\end{theorem}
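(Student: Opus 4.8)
The plan is to establish Theorem~\ref{Thm7.1} by proving the two inequalities in~\eqref{Eq7.3} separately. The left inequality $\text{\rm w}^r_T(f,t)_B\le \omega^r_T(f,t)_B$ is immediate: for every $u\in[0,t]$ we have $\Vert(T(u)-I)^rf\Vert_B\le \omega^r_T(f,t)_B$ by the definition~\eqref{Eq7.2}, so averaging this bound over $u\in[0,t]$ against $\frac1t\,du$ gives the claim. The substance of the theorem is therefore the reverse inequality, and the main obstacle is to bound the supremum over $u\in[0,t]$ of $\Vert(T(u)-I)^rf\Vert_B$ by the \emph{average} $\frac1t\int_0^t\Vert(T(v)-I)^rf\Vert_B\,dv$, i.e.\ to convert an $L^\infty$-type quantity into an $L^1$-type one. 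This cannot be done pointwise; it must exploit the semigroup structure together with the contractivity of each $T(u)$.

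First I would record the algebraic identity that lets one pass from a scale $u$ to a finer averaging scale. Using the binomial expansion of $(T(u)-I)^r$ and the semigroup law $T(ku)=T(u)^k$, one has for any $u$ and any positive integer $N$ a relation of the form
\[
(T(u)-I)^r=\sum_{k=1}^{rN}c_{k,N}\,\bigl(T(ku/N)-I\bigr)^{?}\cdot(\text{lower order / telescoping terms}),
\]
but the cleaner route is the standard averaging trick: for fixed $u\le t$ write, using $T(u)-I$ applied to the Bochner average,
\[
(T(u)-I)^r f=\frac1u\int_0^u\!\!\cdots\!\!\,(\text{an }r\text{-fold construction})\,,
\]
and then use that $T(v)$ is a contraction to push norms inside. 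Concretely, I would use the known device (cf.\ the proof of the equivalence of $K$-functionals with $\omega^r_T$ in~\cite{Di-Iv}): there are bounded operators obtained by integrating $T(v)$ over $v$ that reproduce $(T(u)-I)^r$ up to a correction controlled by $\sup_{0<v\le u}\Vert(T(v)-I)^{r+1}f\Vert_B$, which itself is $\le 2\,\omega^r_T(f,t)_B$. This turns the estimate into a functional inequality for the scalar function $g(u):=\Vert(T(u)-I)^rf\Vert_B$, which is bounded on $[0,t]$, satisfies a sub-multiplicative/doubling bound $g(2u)\le 2^{r}g(u)+(\text{controlled remainder})$ coming from $(T(2u)-I)^r=(T(u)+I)^r(T(u)-I)^r$ and $\Vert T(u)+I\Vert\le 2$, and one wants $\sup_{[0,t]}g\le C(r)\,\frac1t\int_0^t g$.

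The key step is then the following scalar lemma, which I would isolate and prove: if $g:[0,t]\to[0,\infty)$ is measurable, bounded, and satisfies $g(u')\le 2^{r}g(u)$ whenever $u/2\le u'\le u$ (a consequence of contractivity applied to $(T(u)-I)^r=(T(u)-T(u'))(\cdots)+\text{stuff}$, or more simply from writing $T(u)-I=(T(u-u')+I)\cdots$ type expansions), then $\sup_{[0,t]}g\le C(r)\,\frac1t\int_0^t g(u)\,du$. The proof of this lemma: let $u^*\in[0,t]$ (or a near-maximizer) with $g(u^*)$ close to $\sup g$; on the interval $[u^*/2,\,u^*]$, of length $u^*/2$, one has $g(u)\ge 2^{-r}g(u^*)$, hence $\frac1t\int_0^t g\ge \frac1t\cdot\frac{u^*}{2}\cdot 2^{-r}g(u^*)$. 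This is only useful when $u^*$ is comparable to $t$; to handle small $u^*$ one uses the doubling in the other direction, $g(u^*)\le C\,g(2u^*)+ (\text{remainder in terms of }\omega^{r+1})$, iterating until the argument reaches the scale $t$, and absorbing the accumulated remainder using that $\omega^{r+1}_T(f,t)_B\le 2\,\omega^{r}_T(f,t)_B$ together with a final absorption argument (the term being bounded then reappears on the right with a small constant, so it can be moved to the left). I expect this absorption/iteration bookkeeping — getting the constant $C(r)$ to come out finite and tracking the $\omega^{r+1}$ remainder correctly — to be the delicate part; everything else is the routine contractivity manipulation already used in~\cite{Di-Iv} and~\cite{Di88}.
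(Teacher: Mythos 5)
The left inequality is fine, and your recognition that the right one must convert a supremum into an average using contractivity is the right instinct. But the reduction you propose does not go through, and the device that actually makes it work is absent from your plan.

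The paper's proof follows DeVore--Lorentz (pp.\ 184--185), replacing the translation identity there by the operator identity
\begin{equation*}
\bigl(T(h)-I\bigr)^r \;=\; \sum_{k=1}^{r}(-1)^{k}\binom{r}{k}\Bigl\{T(kh)\bigl(T(ks)-I\bigr)^{r}-\bigl(T(h+ks)-I\bigr)^{r}\Bigr\},
\end{equation*}
valid for every $s\ge 0$. The entire argument is then: integrate both sides in $s$ over a short interval, use $\Vert T(kh)\Vert\le 1$ to drop the leading $T(kh)$, change variables in each integral, and observe that every resulting term is an average of $\Vert(T(v)-I)^r f\Vert_B$ over an interval of length comparable to $h$ contained in $[0,Ch]$, which is controlled by $\mathrm{w}^r_T(f,Ch)_B$ and hence by $\mathrm{w}^r_T(f,h)_B$. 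No $(r+1)$st modulus, no absorption, no iteration over scales. Your outline gestures at ``an algebraic identity'' and at ``bounded operators obtained by integrating $T(v)$'' but never isolates this identity, which is the whole point.

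The scalar lemma you propose instead is not correct as stated. You assume $g(u')\le 2^{r}g(u)$ for all $u/2\le u'\le u$, where $g(u)=\Vert(T(u)-I)^r f\Vert_B$. The one inequality that is genuinely available is $g(2u)\le 2^{r}g(u)$, coming from the factorization $(T(2u)-I)^r=(T(u)+I)^r(T(u)-I)^r$ with $\Vert T(u)+I\Vert\le 2$. That bounds $g$ at a \emph{larger} argument by $g$ at a smaller one; it gives no control in the other direction and no control for arguments between $u$ and $2u$. Concretely, take $B=L_2(\IT)$, $T(u)$ translation, $f(x)=e^{ix}$: then $g(u)=|e^{iu}-1|^r=\bigl(2|\sin(u/2)|\bigr)^r$, which vanishes at $u=2\pi$ while $g(3\pi/2)>0$, so there is no constant $C$ with $g(u')\le C\,g(u)$ for $u/2\le u'\le u$. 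The step in your plan where you claim $g(u)\ge 2^{-r}g(u^*)$ on $[u^*/2,u^*]$ therefore fails, and the subsequent ``climb up to scale $t$ and absorb remainders'' would need $g(u)\le C\,g(2u)+(\text{controllable})$, which you have not produced and which cannot be produced from contractivity alone. The identity above is precisely what circumvents this: it replaces the would-be pointwise comparison by an exact operator decomposition whose free parameter $s$ can be averaged out.
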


\begin{proof} We now follow verbatim the proof in \cite[p.184-185]{De-Lo}.  In \cite{De-Lo} the result refers only to $L_p$
and translations, but the proof is the same and the identity
(\ref{Eq5.3}) in \cite[p.184]{De-Lo} is replaced by the identity
\begin{equation}\label{Eq7.4}
\big(T(h) - I\big)^r
=\sum^r_{k=1}(-1)^k\begin{pmatrix}r\\k\end{pmatrix}
\Big\{T(kh)\big(T(ks)-I\big)^r - \big(T(h+ks)-I\big)^r\Big\},
\end{equation}
the proof of which is the same.
\end{proof}

As a corollary, we obtain the following result.

\begin{theorem}\label{Thm7.2}
Suppose $\{T(u)\}_{u\ge 0}$ is a $C_0$ semigroup of contractions on
a Banach space $B$ which satisfies {\rm (\ref{Eq1.2})} for some $2\le
s<\infty  .$  Then
\begin{equation}\label{Eq7.4}
\omega  ^r_T(f,t)_B\ge C\Big\{\sum^\infty_{j=1} 2^{-jrs} \omega
^{r+1}_T(f,2^jt)^s_B\Big\}^{1/s}.
\end{equation}
\end{theorem}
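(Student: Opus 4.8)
The plan is to combine Theorem~\ref{Thm2.1} (the basic inequality for a single contraction) with the equivalence of the averaged and ordinary moduli from Theorem~\ref{Thm7.1}. The key point is that the basic inequality \eqref{Eq2.1} already produces a sum of $s$-th powers of $\Vert(T^{2^j}-I)^{r+1}f\Vert_B$, and we only need to pass from these dyadic ``discrete'' differences to the continuous suprema $\omega^{r+1}_T(f,2^j t)_B$.

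First I would fix $t>0$ and apply Theorem~\ref{Thm2.1} with the contraction $T=T(t)$, so that $T^{2^j}=T(2^j t)$. This gives
\[
\Vert (T(t)-I)^r f\Vert_B \ge m_1\Bigl(\sum_{j=0}^\infty 2^{-jrs}\Vert (T(2^j t)-I)^{r+1}f\Vert^s_B\Bigr)^{1/s}.
\]
Next I would bound the left side from above by $\omega^r_T(f,t)_B$, which is immediate from the definition \eqref{Eq7.2} since $0\le t\le t$. For the right side I would bound each term from below using $\Vert (T(2^j t)-I)^{r+1}f\Vert_B \le \omega^{r+1}_T(f,2^j t)_B$ — but this is the \emph{wrong direction}, so instead I need to control $\omega^{r+1}_T(f,2^j t)_B$ by the discrete differences. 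The natural route is: by Theorem~\ref{Thm7.1}, $\omega^{r+1}_T(f,2^j t)_B \le C(r+1)\,\mathrm{w}^{r+1}_T(f,2^j t)_B = \frac{C(r+1)}{2^j t}\int_0^{2^j t}\Vert (T(u)-I)^{r+1}f\Vert_B\,du$. I would then apply the basic inequality again, now with $T(u)$ for each $u$ in a dyadic range, or — more cleanly — re-index: writing $2^j t = 2^k\cdot t$ and observing that $\Vert (T(2^k t)-I)^{r+1}f\Vert_B$ for $k\ge j$ are exactly the tail terms appearing in the sum obtained by applying Theorem~\ref{Thm2.1} to the contraction $T(2^{j}t)$ (note $(T(2^j t))^{2^i}=T(2^{i+j}t)$). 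Thus for each $j$,
\[
\omega^{r+1}_T(f,2^j t)_B \ge \Vert (T(2^j t)-I)^{r+1}f\Vert_B,
\]
which again is the wrong way. The correct manoeuvre is to bound the \emph{summands on the right of \eqref{Eq7.4} from above}: I want
\[
\Bigl(\sum_j 2^{-jrs}\omega^{r+1}_T(f,2^j t)^s_B\Bigr)^{1/s} \le C\,\omega^r_T(f,t)_B,
\]
so I need $\omega^{r+1}_T(f,2^j t)_B \le C\sum_{k\ge j}2^{-(k-j)r}\Vert(T(2^k t)-I)^{r+1}f\Vert_B$ (a Marchaud-type majorization). This follows from Theorem~\ref{Thm7.1} together with the standard estimate bounding $\frac1{2^j t}\int_0^{2^j t}\Vert(T(u)-I)^{r+1}f\Vert_B\,du$ by a weighted sum over the dyadic blocks $[2^{k-1}t,2^k t]$ for $k\le j$, on each of which $\Vert(T(u)-I)^{r+1}f\Vert_B$ is controlled (via \eqref{Eq7.4} with appropriate $h,s$) by $\Vert(T(2^k t)-I)^{r+1}f\Vert_B$ plus smaller-scale terms — a routine but slightly delicate telescoping.

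The cleanest organization is therefore: (i) apply Theorem~\ref{Thm2.1} with $T=T(t)$ to get a lower bound for $\omega^r_T(f,t)_B$ by $\bigl(\sum_{j\ge0}2^{-jrs}\Vert(T(2^j t)-I)^{r+1}f\Vert^s_B\bigr)^{1/s}$; (ii) observe $\Vert(T(2^j t)-I)^{r+1}f\Vert_B \ge c\,\mathrm{w}^{r+1}_T(f,2^j t)_B$ is false in general, so instead prove the reverse Marchaud-type bound $\omega^{r+1}_T(f,2^j t)_B \le C\sum_{k\ge j}2^{-(k-j)r}\Vert(T(2^k t)-I)^{r+1}f\Vert_B$ using \eqref{Eq7.4} and Theorem~\ref{Thm7.1}; (iii) substitute and use Hardy's inequality (discrete form) to absorb the geometric weights, concluding $\bigl(\sum_{j\ge1}2^{-jrs}\omega^{r+1}_T(f,2^j t)^s_B\bigr)^{1/s}\le C\bigl(\sum_{j\ge0}2^{-jrs}\Vert(T(2^j t)-I)^{r+1}f\Vert^s_B\bigr)^{1/s}\le C\,\omega^r_T(f,t)_B$.

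The main obstacle I anticipate is step~(ii)/(iii): establishing the Marchaud-type majorization of the continuous modulus $\omega^{r+1}_T(f,2^j t)_B$ by the discrete dyadic differences $\Vert(T(2^k t)-I)^{r+1}f\Vert_B$ with geometrically decaying weights, and then verifying that the discrete Hardy inequality applies with the correct exponent $s\ge 2$ so that the weights $2^{-(k-j)r}$ (combined across the double sum) converge. Everything else — the application of Theorem~\ref{Thm2.1}, the bound $\Vert(T(t)-I)^rf\Vert_B\le\omega^r_T(f,t)_B$, and invoking Theorem~\ref{Thm7.1} — is immediate from results already in the excerpt.
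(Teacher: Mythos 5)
Your step (i) is the right starting move, and you correctly identified that Theorem~\ref{Thm2.1} and Theorem~\ref{Thm7.1} are the only ingredients needed; but the way you propose to combine them in steps (ii)--(iii) has a genuine gap. The Marchaud-type majorization you want, namely
\[
\omega^{r+1}_T(f,2^jt)_B \le C\sum_{k\ge j}2^{-(k-j)r}\big\Vert(T(2^kt)-I)^{r+1}f\big\Vert_B,
\]
is in fact false for general $C_0$ contraction semigroups. For a counterexample take $B=L_2$ with translations $T(u)f=f(\cdot+u)$ and $f$ (essentially) a pure oscillation $e^{i\lambda x}$ with $\lambda=2\pi/(2^jt)$: then $(T(2^kt)-I)f=0$ for all $k\ge j$, so the right side vanishes, while $\omega^{r+1}_T(f,2^jt)_B\approx 2^{r+1}\Vert f\Vert_B>0$. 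The underlying difficulty is structural: the single discrete differences $\Vert(T(2^kt)-I)^{r+1}f\Vert_B$ at the dyadic points carry no information about the supremum of $\Vert(T(u)-I)^{r+1}f\Vert_B$ over the full interval $0<u\le 2^jt$, and the only inequalities available for contraction semigroups (such as $\Vert(T(nu)-I)^rf\Vert_B\le n^r\Vert(T(u)-I)^rf\Vert_B$) run from small $u$ to large $u$, not the reverse. So the ``slightly delicate telescoping'' you hoped for cannot be made to work, and discrete Hardy does not rescue it.

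The paper avoids this obstacle by averaging \emph{before} applying Theorem~\ref{Thm2.1} rather than after, so that the continuous modulus appears on the right automatically. Concretely: first bound
\[
\omega^r_T(f,t)^s_B \ge \frac1t\int_0^t\big\Vert(T(u)-I)^rf\big\Vert^s_B\,du,
\]
then apply Theorem~\ref{Thm2.1} with $T=T(u)$ for each $u$ and integrate term by term; the change of variable $v=2^ju$ turns the $j$-th summand into $\frac{1}{2^jt}\int_0^{2^jt}\Vert(T(v)-I)^{r+1}f\Vert^s_B\,dv$; by Jensen (power-mean) this dominates $\text{\rm w}^{r+1}_T(f,2^jt)^s_B$; and finally Theorem~\ref{Thm7.1} converts $\text{\rm w}^{r+1}_T$ to $\omega^{r+1}_T$. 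Note that each of these steps goes in the correct direction, and no Marchaud-type inequality is needed. You should replace your steps (ii)--(iii) with this averaging scheme.
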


\begin{proof}
Using the definition of $\omega  ^r_T(f,t)_B,$ we have
$$
\omega  ^r_T(f,t)^s_B\ge \frac 1t\,\int^t_0\, \big\Vert
\big(T(u)-I\big)^rf\big\Vert  ^s_Bdu.
$$

Theorem \ref{Thm2.1} now implies
$$
\omega  ^r_T(f,t)^s_B \ge \frac{m_1}{t}\,\int^t_0\;\sum^\ell_{j=0}
\, 2^{-rsj} \big\Vert  \big(T(2^ju)-I\big)^{r+1}f\big\Vert  ^s_B du
$$
(which setting $v= 2^ju)$
\vspace{-10pt}
$$
\begin{aligned}
&= m_1\;\sum^\ell_{j=0}\, 2^{-rsj}\, \frac{1}{2^jt}\;\int^{2^jt}_0
\, \big\Vert  \big(T(v)-I\big)^{r+1}f\big\Vert  ^s_Bdv\\
{}\q\q\q\q\q\q\q &\ge m_1\;\sum^\ell_{j=0}\, 2^{-rsj}
\Big(\,\frac{1}{2^jt}\;\int^{2^jt}_0 \,\big\Vert
\big(T(v)-I\big)^{r+1}f\big\Vert  _Bdv\Big)^s\\
&= m_1\;\sum^\ell_{j=0}\, 2^{-rsj}\, {\text{\rm w}}^{r+1}_T (f,2^jt)^s_B\\
&\ge \frac{m_1}{\big(C(r+1)\big)^s} \,\sum^\ell_{j=0}\, 2^{-rsj} \omega
^{r+1}_T (f,2^jt)^s_B\,.
\end{aligned}
$$
~\end{proof}

As an immediate application, we obtain the following result.

\begin{theorem}\label{Thm7.3}
Suppose  $B$ is a Banach space of functions on $\IR_+,\,\IR$ or $\IT$
satisfying {\rm (\ref{Eq1.2})} with some $2\le s<\infty  $ and $\Vert
f(\cdot \,+\xi  )\Vert  _B\le \Vert  f(\,\cdot\,)\Vert  _B$ for $\xi
 \ge 0.$ Then
\begin{equation}\label{Eq7.5}
\omega  ^r(f,t)_B \ge C\Big\{\sum^\infty_{j=1} \, 2^{-jrs} \omega
^{r+1}(f,2^jt)^s_B\Big\}^{1/s}
\end{equation}
where $\omega  ^k(f,t)_B$ is $\omega  ^k_T(f,t)_B$ with $T(u)f(x) =
f(x+u).$
\end{theorem}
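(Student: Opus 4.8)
The plan is to realize $\omega^r(f,t)_B$ as $\omega^r_T(f,t)_B$ for an explicit $C_0$ semigroup of contractions and then invoke Theorem~\ref{Thm7.2}, so that nothing new needs to be computed. First I would put $T(u)f(x)=f(x+u)$ for $u\ge0$; on each of the three underlying sets ($\IR_+$, $\IR$, $\IT$) the point $x+u$ stays in the domain, and $T(u)T(v)f(x)=f(x+u+v)=T(u+v)f(x)$, so $\{T(u)\}_{u\ge0}$ is a semigroup. The standing hypothesis $\Vert f(\cdot+\xi)\Vert_B\le\Vert f(\cdot)\Vert_B$ for $\xi\ge0$ is precisely the statement that every $T(u)$ is a contraction on $B$.

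Next I would record the bookkeeping: $(T(u)-I)^rf=\Delta^r_uf$ pointwise, hence the averaged and supremum moduli $\text{w}^r_T(f,t)_B$ and $\omega^r_T(f,t)_B$ built from this semigroup coincide with the quantities in the statement; in particular $\omega^k_T(f,t)_B=\omega^k(f,t)_B$ with the normalization fixed in the theorem. One delicate point is strong continuity, $\lim_{u\to0+}\Vert T(u)f-f\Vert_B=0$: this is not written into the hypotheses of Theorem~\ref{Thm7.3}, but it is the natural ambient assumption on function spaces carrying a modulus of smoothness (compare the continuity clause in~\eqref{Eq1.3}). Granting it, $\{T(u)\}_{u\ge0}$ is a $C_0$ semigroup of contractions on $B$ satisfying~\eqref{Eq1.2}, so Theorem~\ref{Thm7.2} applies verbatim and gives $\omega^r_T(f,t)_B\ge C\{\sum_{j=1}^\infty 2^{-jrs}\omega^{r+1}_T(f,2^jt)^s_B\}^{1/s}$, which is exactly~\eqref{Eq7.5}.

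The main obstacle, then, is purely one of hypotheses rather than of analysis: one must be confident that translation genuinely is a $C_0$ semigroup of contractions. Contractivity is handed to us, the semigroup law is immediate, and the quantitative content is entirely carried by Theorem~\ref{Thm7.2}, which in turn rests on the averaging equivalence (Theorem~\ref{Thm7.1}) and the basic inequality~\eqref{Eq2.1}. If one prefers not to assume strong continuity outright, the fix is to notice that the proofs of Theorems~\ref{Thm7.1} and~\ref{Thm7.2} use only the algebraic identity for $(T(h)-I)^r$ displayed in the proof of Theorem~\ref{Thm7.1}, the contraction property, and local integrability of $u\mapsto\Vert(T(u)-I)^rf\Vert_B$ on $[0,t]$, none of which needs strong continuity, so the conclusion survives unchanged.
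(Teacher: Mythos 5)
Your proof is correct and matches the paper's own route: the paper presents Theorem~\ref{Thm7.3} as an immediate consequence of Theorem~\ref{Thm7.2} applied to the translation semigroup $T(u)f(x)=f(x+u)$, which is a contraction semigroup precisely by the hypothesis $\Vert f(\cdot+\xi)\Vert_B\le\Vert f(\cdot)\Vert_B$. Your flag on the $C_0$ requirement is a fair one, and your proposed resolution---that the proofs of Theorems~\ref{Thm7.1} and~\ref{Thm7.2} use only the algebraic identity~\eqref{Eq7.4}, contractivity, and integrability of $u\mapsto\Vert(T(u)-I)^rf\Vert_B$, not strong continuity---is exactly right.
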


We remark that for $\IR_+\,,$ Theorem \ref{Thm7.3} was not deduced in
\cite{Da-Di-Ti} even for $L_p(\IR_+),$ with $1<p<\infty .$  Of course
(\ref{Eq7.5}) is valid for other spaces, not just $L_p.$

\section{Results for spaces of functions on $\IR^d$ or $\IT^d,\; d>1$}
\label{Sec8}
For $d>1$ we use a result on averaged moduli that stems from the work
\cite{Da-Di04} which is different from the averaged moduli in Section
\ref{Sec7}.

We define
\begin{equation}\label{Eq8.1}
V_tf(x) = \frac{1}{m_t} \;\int_{\vert  x-y\vert  =t} \, f(y)dy, \q
V_t1=1
\end{equation}
where $\vert  x-y\vert  $ is the Euclidean distance between $x$ and
$y$
for which we have the following result.

\begin{theorem}\label{Thm8.1}
Suppose $B$ is a Banach space of functions on $\IR^d$ or $\IT^d$ with
$d>1$ which satisfies {\rm (\ref{Eq1.3})}. Then
\begin{equation}\label{Eq8.2}
\Vert  V_{\ell,t}f-f\Vert  _B\approx \us g\inf\, (\Vert  f-g\Vert
_B + t^{2\ell}\Vert  \Delta  ^\ell g\Vert  _B) \equiv K_{\Delta
^\ell}(f,t^{2\ell})_B
\end{equation}
where
\begin{equation}\label{Eq8.3}
V_{\ell,t}f \equiv
\,\frac{-2}{\begin{binom}{2\ell}{\ell}\end{binom}}\;\sum^\ell_{j=1}\,
(-1)^j\begin{pmatrix} 2\ell\\ \ell-j\end{pmatrix} V_{jt}f
\end{equation}
and $\Delta f\equiv\frac{\partial^2f}{\partial x_1^2}+\dots+\frac{\partial^2f}{\partial x_d^2}$ is the Laplacian.
\end{theorem}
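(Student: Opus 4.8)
The plan is to establish the equivalence in (\ref{Eq8.2}) by interpreting $V_{\ell,t}$ as a multiplier operator on the decomposition $f\sim\sum_k P_kf$ into spherical harmonics in the $y$-variable, and then invoking the same K-functional machinery used earlier in the paper. First I would recall the classical fact that for a radial average $V_tf$ of a function on $\IR^d$ the Fourier multiplier is $\widehat{V_tf}(\xi)=\Omega_d(t|\xi|)\widehat f(\xi)$, where $\Omega_d(s)=c_d s^{-(d-2)/2}J_{(d-2)/2}(s)$ is a normalized Bessel function; thus $V_{\ell,t}$ acts as multiplication by $m_\ell(t|\xi|)$, where $m_\ell(s)=\frac{-2}{\binom{2\ell}{\ell}}\sum_{j=1}^\ell(-1)^j\binom{2\ell}{\ell-j}\Omega_d(js)$. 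The key analytic input is that $1-m_\ell(s)\approx s^{2\ell}$ for $s$ small and $m_\ell(s)$ is bounded with $|m_\ell(s)|\le C(1+s)^{-1/2}$ for $s$ large (this is precisely the kind of estimate underlying \cite{Da-Di04}); the exponent $2\ell$ and the combinatorial coefficients in (\ref{Eq8.3}) are chosen exactly so that the leading Taylor terms of the individual $\Omega_d(js)$ cancel up to order $2\ell-2$.

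The upper bound $\Vert V_{\ell,t}f-f\Vert_B\le CK_{\Delta^\ell}(f,t^{2\ell})_B$ I would get by the standard splitting: write $f=g+(f-g)$ for any $g\in\SD(\Delta^\ell)$; then $\Vert V_{\ell,t}(f-g)-(f-g)\Vert_B\le(\Vert V_{\ell,t}\Vert+1)\Vert f-g\Vert_B$, and since $V_{jt}$ is an average of translates, (\ref{Eq1.3}) gives $\Vert V_{jt}h\Vert_B\le\Vert h\Vert_B$, so $\Vert V_{\ell,t}\Vert_B\le C(\ell)$; and $\Vert V_{\ell,t}g-g\Vert_B\le Ct^{2\ell}\Vert\Delta^\ell g\Vert_B$ because the multiplier $\frac{1-m_\ell(t|\xi|)}{(t|\xi|)^{2\ell}}$ is (uniformly in $t$) a bounded Fourier multiplier on $B$ — here I would use exactly the device from the end of the proof of Theorem~\ref{Thm4.2}: reduce the $B$-norm estimate of a convolution operator to the sup-norm estimate of the scalar convolution $h*\varphi$ with $\varphi\in B^*$, $\Vert\varphi\Vert_{B^*}=1$, which holds because the relevant kernels are in $L_1(\IR^d)$ with norm bounded uniformly in $t$.

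For the reverse inequality $K_{\Delta^\ell}(f,t^{2\ell})_B\le C\Vert V_{\ell,t}f-f\Vert_B$ I would follow the realization/strong-converse technique already cited in the paper, namely mimic the proof of \cite[Th.5.1]{Di-Iv} (the same argument used in Theorem~\ref{Thm4.2}): take $g=g_t$ to be a smoothed version of $f$ built from $V_{\ell,t}$ itself — e.g. $g_t=\big(I-(I-V_{\ell,t})^N\big)f$ for suitable $N$, which is a polynomial in the $V_{jt}$'s — and show $\Vert f-g_t\Vert_B\le C\Vert V_{\ell,t}f-f\Vert_B$ together with the Bernstein-type estimate $t^{2\ell}\Vert\Delta^\ell g_t\Vert_B\le C\Vert V_{\ell,t}f-f\Vert_B$. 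The Bernstein estimate again reduces, via the $\varphi\in B^*$ duality trick, to a uniform $L_1$ bound on the kernel of the multiplier $(t|\xi|)^{2\ell}\big(1-(1-m_\ell(t|\xi|))^N\big)/\big(1-m_\ell(t|\xi|)\big)$, whose only possible bad points are the zeros of $1-m_\ell$; choosing $N$ large controls the behaviour near those zeros, exactly as in \cite{Da-Di04}.

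The main obstacle I anticipate is verifying that these Bessel-type multipliers are genuinely bounded convolution operators on an \emph{arbitrary} Banach space $B$ satisfying only (\ref{Eq1.3}), rather than on $L_p$ where one has the Hörmander–Mikhlin theorem. The resolution is the $B^*$-duality reduction to scalar $L_1$ kernel estimates (as in Theorem~\ref{Thm4.2} and \cite{Da-Di04}): one never needs a multiplier theorem, only that the inverse Fourier transform of the symbol is an $L_1(\IR^d)$ function with norm independent of $t$. So the technical heart of the proof is the kernel analysis — showing the relevant symbols have $L_1$-integrable inverse transforms with uniform bounds — and this is precisely the content borrowed from \cite{Da-Di04}; for $\IT^d$ one transfers the result from $\IR^d$ in the usual way. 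Once these kernel facts are in hand, (\ref{Eq8.2}) follows by assembling the two one-sided estimates.
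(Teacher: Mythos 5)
Your plan and the paper's proof share the key duality device (convolve with $g\in B^*$, $\Vert g\Vert_{B^*}=1$, to reduce a $B$-norm inequality for a convolution operator to a $C(\IR^d)$-norm inequality), but they differ fundamentally in where the analytic work is done. You propose to re-derive the whole equivalence from Bessel multiplier and $L_1$ kernel estimates, essentially reproving \cite[Th.3.1]{Da-Di04} in a general Banach space. The paper instead takes the $C(\IR^d)$ case (that is, \cite[Th.3.1]{Da-Di04} with $p=\infty$) as a black box, quotes a ready-made Bernstein inequality for $V^{m_1}_{kt}$ from \cite[Th.6.2]{Be-Da-Di}, establishes the two-sided realization estimate
$A^{-1}\Vert F-V_{\ell,t}F\Vert_{C}\le \Vert F-V^m_{\ell,t}F\Vert_{C}+t^{2\ell}\Vert\Delta^\ell V^m_{\ell,t}F\Vert_{C}\le A\Vert F-V_{\ell,t}F\Vert_{C}$
on $C(\IR^d)$, and transfers each of the four inequalities to $B$ by the duality trick; no Fourier kernel is estimated directly. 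Your route is more self-contained in spirit but much heavier, and the paper's route is the more economical one.

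Beyond that, there is a genuine gap in the specific realization you propose. You take $g_t=\bigl(I-(I-V_{\ell,t})^N\bigr)f$ and want the Bernstein-type bound $t^{2\ell}\Vert\Delta^\ell g_t\Vert_B\le C\Vert V_{\ell,t}f-f\Vert_B$. The symbol of $I-(I-V_{\ell,t})^N$ is $1-\bigl(1-m_\ell(t|\xi|)\bigr)^N$, and for large $|\xi|$, where $m_\ell(t|\xi|)\to 0$, this is $\sim N\,m_\ell(t|\xi|)=O\bigl((t|\xi|)^{-(d-1)/2}\bigr)$ \emph{for every} $N$: iterating the difference operator does not improve the tail decay, since the tail of $1-(1-z)^N$ is governed by its linear term in $z$. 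In particular in $d=2$ the decay is only $(t|\xi|)^{-1/2}$ and cannot absorb the factor $(t|\xi|)^{2\ell}$ coming from $\Delta^\ell$. What does work, and what the paper (following \cite{Be-Da-Di}) uses, is the genuine power $V^m_{\ell,t}f$, whose symbol $m_\ell(t|\xi|)^m$ decays like $(t|\xi|)^{-m(d-1)/2}$; one then needs $m$ large, e.g.\ $m>\frac{2(d+2)}{d-1}\ell^2$, to dominate the polynomial growth. A related, smaller inaccuracy: you write $|m_\ell(s)|\le C(1+s)^{-1/2}$; the correct rate is $\Omega_d(s)=O(s^{-(d-1)/2})$, which is only $s^{-1/2}$ when $d=2$, and since this exponent is what gets multiplied by $m$ it matters quantitatively for how large $m$ must be.
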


\begin{proof}
For $L_p(\IR^d)$\; $1\le p\le \infty  $ and $d>1,$ Theorem \ref{Thm8.1}
was proved in \cite[Th.3.1, pp.273-276]{Da-Di04}, and in fact all we
do here is show how to deduce our theorem from \cite[Th.3.1]{Da-Di04}.
We note that (\ref{Eq8.2}) for $L_\infty  (\IR^d)$
implies the validity of (\ref{Eq8.2}) for $C(\IR^d).$  (Perhaps the
only interesting situation of (\ref{Eq8.2}) in case $B= L_\infty
(\IR^d)$ is when $B= C(\IR^d),$ because only when $f\in C(\IR^d)$ do both
sides of (\ref{Eq8.2}) tend to zero as $t\to 0.)$

Using \cite[Th.6.2,p.97]{Be-Da-Di} with
$m_1>\frac{2(d+2)}{d-1}\,\ell,$ we have
$$
\Vert  \Delta  ^\ell V^{m_1}_{kt}F\Vert  _{C(\IR^d)} \le
\frac{A_1(m_1,\ell,k)}{t^{2\ell}}\, \Vert  F\Vert  _{C(\IR^d)},
$$
and hence for $m$ large enough, $m>\frac{2(d+2)}{d-1}\,\ell^2$ for
example, we have
\begin{equation}\label{Eq9.4}
\Vert  \Delta  ^\ell V^m_{\ell,t} F\Vert  _{C(\IR^d)} \le
\frac{A_2(m,\ell)}{t^{2\ell}} \,\Vert  F\Vert  _{C(\IR^d)}\,.
\end{equation}
We now show that for $F\in C(\IR^d)$
\begin{equation}\label{Eq9.5}
\begin{aligned}
A^{-1}\Vert F-V_{\ell,t}F\Vert  _{C(\IR^d)}
&\le \Vert  F-V^m_{\ell,t}F\Vert  _{C(\IR^d)} + t^{2\ell}\Vert
\Delta  ^\ell V^m_{\ell,t}F\Vert  _{C(\IR^d)}\\
&\le A\Vert  F-V_{\ell,t}F\Vert  _{C(\IR^d)}\,.
\end{aligned}
\end{equation}

The left hand inequality of (\ref{Eq9.5}) is clear using
\eqref{Eq8.2} for $C(\IR^d)$ (already proved in
\cite[Th.3.1]{Da-Di04}), and recalling the definition of $K_{\Delta
^\ell}(f,t^{2\ell})_{C(\IR^d)}.$  Using $\Vert  V_{\ell,t}F\Vert
_{C(\IR^d)} \le A_1\Vert  F\Vert  _{C(\IR^d)}\,,$ we have
$$
\Vert  F-V^m_{\ell,t}F\Vert  _{C(\IR^d)} \le A_2\Vert
F-V_{\ell,t}F\Vert  _{C(\IR^d)}\,.
$$

To conclude the proof of \eqref{Eq9.5} we have to estimate
$t^{2\ell}\Vert  \Delta  ^\ell V^m_{\ell,t}F\Vert  _{C(\IR^d)}\,.$
We choose $G_1$ such that $\Vert  F-G_1\Vert  _{C(\IR^d)} +
t^{2\ell}\Vert  \Delta  ^\ell G_1\Vert  _{C(\IR^d)} \le 2K_{\Delta
^\ell}(F,t^{2\ell})_{C(\IR^d)}$ and write
$$
\begin{aligned}
t^{2\ell}\Vert  \Delta  ^\ell V^m_{\ell,t}F\Vert  _{C(\IR^d)}
&\le t^{2\ell} \Vert  \Delta  ^\ell V^m_{\ell,t}(F-G_1)\Vert
_{C(\IR^d)} + t^{2\ell}\Vert  \Delta  ^\ell V^m_{\ell,t}G_1\Vert
_{C(\IR^d)}\\
&\le A_2 (m,\ell)\Vert  F-G_1\Vert  _{C(\IR^d)} + t^{2\ell}\Vert
V^m_{\ell,t}\Delta  ^\ell G_1\Vert  _{C(\IR^d)}\\
&\le A_2(m,\ell)\Vert  F-G_1\Vert  _{C(\IR^d)} + t^{2\ell}A^m_3
\Vert  \Delta  ^\ell G_1\Vert  _{C(\IR^d)}\\
&\le A_4 K_{\Delta  ^\ell}(F,t^{2\ell})_{C(\IR^d)} \\
&\le A_5 \Vert  F-V_{\ell,t}F\Vert  _{C(\IR^d)},
\end{aligned}
$$
which concludes the proof of \eqref{Eq9.5}.  To prove \eqref{Eq8.2}
for a Banach space on $\IR^d$ or $\IT^d,$ we proceed first by showing
\begin{equation}\label{Eq9.6}
\begin{aligned}
A^{-1} \Vert  f- V_{\ell,t}f\Vert  _B
&\le \Vert  f-V^m_{\ell,t}f\Vert  _B + t^{2\ell}\Vert  \Delta  ^\ell
V^m_{\ell,t}f\Vert  _B\\
&\le 2A \Vert  f-V_{\ell,t}f\Vert  _B\,.
\end{aligned}
\end{equation}
We first attend to Banach spaces $B$ of functions on $\IR^d.$  To
prove the left hand inequality of \eqref{Eq9.6}, we choose $g\in B^*$
satisfying $\Vert  g\Vert  _{B^*}=1$ and define $F(x) = f*g(x) =\la
f(x-\,\cdot\,),g(\,\cdot\,)\ra\,.$
Using \eqref{Eq1.3} we have $F\in C(\IR^d)$ and recalling
\eqref{Eq9.5}, we have
$$
A^{-1}\Vert  F-V_{\ell,t}F\Vert  _{C(\IR^d)} \le \Vert
F-V^m_{\ell,t} F\Vert  _{C(\IR^d)} + t^{2\ell}\Vert  \Delta  ^\ell
V_{\ell,t}F\Vert  _{C(\IR^d)}
$$
(so using $\Vert  g\Vert  _{B^*}=1$ and the convolution structure of
$V_{\ell,t}$ will imply)
$$
\le \Vert  f-V^m_{\ell,t}f\Vert  _B + t^{2\ell}\Vert  \Delta  ^\ell
V_{\ell,t}f\Vert  _B\,.
$$
For appropriate $g_\varepsilon  $ and $F=F_\varepsilon
=f*g_\varepsilon  $
$$
\Vert  F-V_{\ell,t}F\Vert  _{C(\IR^d)} \ge \vert  F(0) -
V_{\ell,t}F(0)\vert  \ge \Vert  f-V_{\ell,t}f\Vert_B  -\varepsilon  ,
$$
and as $\varepsilon  >0$ is arbitrary, the left inequality of
\eqref{Eq9.6} is proved.

We now follow the same technique to deduce from
$$
\Vert
F-V^m_{\ell,t}F\Vert  _{C(\IR^d)} \le A\Vert  F-V_{\ell,t}F\Vert
_{C(\IR^d)}\q\text{\rm and}\q t^{2\ell}\Vert  \Delta  ^\ell V^m_{\ell,t}F\Vert
_{C(\IR^d)} \le A\Vert  F-V_{\ell,t}F\Vert  _{C(\IR^d)}
$$
the inequalities
$$
\Vert  f-V^m_{\ell,t}f\Vert  _B\le A\Vert  f-V_{\ell,t}f\Vert
_B\q\text{\rm and}\q t^{2\ell}\Vert  \Delta  ^\ell
V^m_{\ell,t}f\Vert  _B\le A\Vert  f-V_{\ell,t}f\Vert  _B\,,
$$
which together with the above, imply \eqref{Eq9.6} and hence \eqref{Eq8.2} for a Banach
space of functions on $\IR^d$ satisfying \eqref{Eq1.3}.

To prove
the result for a Banach space of functions satisfying \eqref{Eq1.3}
on $\IT^d,$ we observe that $C(\IT^d)\subset C(\IR^d)$ and that for
$F\in C(\IT^d),$ $V^k_{\ell,t}F\in C(\IT^d)$ for all $k,\ell$ and $t.$

Moreover, \eqref{Eq9.5} is satisfied with the norm $C(\IT^d)$
replacing $C(\IR^d),$ since if
\newline$G\in C(\IT^d),$ $\Vert  G\Vert
_{C(\IT^d)} = \Vert  G\Vert  _{C(\IR^d)}\,.$ We now use the same
technique to deduce \eqref{Eq9.6} for Banach spaces of functions on
$\IT^d$ from \eqref{Eq9.5} with $C(\IT^d)$ instead of $C(\IR^d).$

To show that the inequality \eqref{Eq9.6} implies \eqref{Eq8.2}, we
observe that the right hand inequality implies $\Vert
f-V_{\ell,t}f\Vert  _B\ge \frac{1}{2A}\, K_{\Delta
^\ell}(f,t^{2\ell})_B\,.$  Choosing $g$ such that $\Delta  ^\ell
g\in B$ and
\newline $\Vert  f-g\Vert  _B +t^{2\ell}\Vert  \Delta  ^\ell
g\Vert  _B \le 2K_{\Delta  ^\ell}(f,t^{2\ell})_B\,,$ and using the
left inequality of \eqref{Eq9.6}, we write
$$
\begin{aligned}
A^{-1}\Vert  f-V_{\ell,t}f\Vert  _B
&\le \Vert  f-g\Vert  _B + \Vert  V^m_{\ell,t}(f-g)\Vert  _B\\
&\q + t^{2\ell}\Vert  \Delta  ^\ell V^m_{\ell,t}(f-g))\Vert  _B +
t^{2\ell}\Vert  \Delta  ^\ell V^m_{\ell,t}g\Vert  _B.
\end{aligned}
$$

We now follow the method used earlier to deduce
$$
\Vert  V^m_{\ell,t}f\Vert  _B\le A_5\Vert  f\Vert  _B\q\text{\rm
and}\q \Vert  \Delta  ^\ell V^m_{\ell,t} f\Vert  \le
\frac{A_2(m,\ell)}{t^{2\ell}}\, \Vert  f\Vert  _B\q\text{\rm for
all}\q f\in B,
$$
from the corresponding inequalities for $B=C(\IR^d)$ or
$B=C(\IT^d).$  We also need to recall that $\Vert  \Delta  ^\ell
V^m_{\ell,t}g\Vert  _B = \Vert  V^m_{\ell,t}\Delta  ^\ell g\Vert
_B$ whenever $\Delta^\ell g\in B$ to complete the proof.
\end{proof}

\begin{theorem}\label{Thm8.2}
Suppose $B$ is a Banach space of functions on $\IR^d$ or $\IT^d$ and its
norm satisfies {\rm (\ref{Eq1.2})} for some $s,$ \;$2\le s<\infty  ,$
and {\rm (\ref{Eq1.3})}.
  Then for any $\ell$ such that $2\ell >r$
\begin{equation}\label{Eq8.9}
\omega  ^r (f,t)_B\ge C\Big\{\sum^\infty_{j=1} 2^{-jrs}K_{\Delta
^\ell}\big(f,(2^j t)^{2\ell}\big)^s_B\Big\}^{1/s}.
\end{equation}
\end{theorem}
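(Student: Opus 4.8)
The plan is to combine the basic inequality of Theorem~\ref{Thm2.1}, applied directionally with $T$ a translation, with an average over the sphere of directions, and then to recognise the resulting rotationally averaged modulus as a constant multiple of $\|V_{\ell,\cdot}f-f\|_B$, which Theorem~\ref{Thm8.1} identifies with the $K$-functional on the right of (\ref{Eq8.9}). Write $|S_\rho|$ for the surface measure of the sphere $\{h\in\IR^d:|h|=\rho\}$ and, for an integer $k\ge1$ and $\rho>0$, introduce the averaged modulus
\[
\sigma^k(f,\rho)_B=\Bigl(\frac1{|S_\rho|}\int_{|h|=\rho}\|\Delta_h^kf\|_B^s\,dh\Bigr)^{1/s}.
\]
By (\ref{Eq1.3}) the map $h\mapsto\Delta_h^kf$ is continuous into $B$, so this is well defined, and since a supremum dominates an $s$-mean, $\omega^r(f,t)_B\ge\sigma^r(f,t)_B$.

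First I would establish a Marchaud-type chain for the $\sigma^k$. For a fixed $h$ the translation $T_hf=f(\cdot+h)$ is a linear isometry of $B$ by (\ref{Eq1.3}), hence a contraction, so Theorem~\ref{Thm2.1} applied with $T=T_h$ (in the form it takes before extracting the $s$-th root, and with $T_h^{2^j}=T_{2^jh}$) gives, for every $h$,
\[
\|\Delta_h^kf\|_B^s\ge m_1^s\sum_{j\ge0}2^{-jks}\|\Delta_{2^jh}^{k+1}f\|_B^s .
\]
Integrating over $\{|h|=\rho\}$, dividing by $|S_\rho|$, and changing variables $h\mapsto2^jh$ in the $j$-th summand (a dilation, under which normalised surface measure on $\{|h|=\rho\}$ is carried to normalised surface measure on $\{|h|=2^j\rho\}$) yields
\[
\sigma^k(f,\rho)_B^s\ge m_1^s\sum_{j\ge0}2^{-jks}\sigma^{k+1}(f,2^j\rho)_B^s\ \ge\ m_1^s\,\sigma^{k+1}(f,\rho)_B^s .
\]
Using the first of these with $k=r$, and then iterating the second one inside each summand (which lowers $\sigma^{r+1}$ to $\sigma^{2\ell}$ in $2\ell-r-1\ge0$ steps, the scale staying fixed), I obtain
\[
\omega^r(f,t)_B^s\ \ge\ \sigma^r(f,t)_B^s\ \ge\ c\sum_{j\ge1}2^{-jrs}\,\sigma^{2\ell}(f,2^jt)_B^s .
\]

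It then remains to bound $\sigma^{2\ell}(f,\tau)_B$ below by $K_{\Delta^\ell}(f,\tau^{2\ell})_B$. Let $\widetilde\Delta_h^{2\ell}f(x)=\sum_{k=0}^{2\ell}(-1)^k\binom{2\ell}{k}f(x+(k-\ell)h)$ be the centred $2\ell$-th difference; since $\widetilde\Delta_h^{2\ell}f=\Delta_h^{2\ell}f(\cdot-\ell h)$, condition (\ref{Eq1.3}) gives $\|\widetilde\Delta_h^{2\ell}f\|_B=\|\Delta_h^{2\ell}f\|_B$. A short computation (average $\widetilde\Delta_h^{2\ell}f$ over $\{|h|=\tau\}$, use $\frac1{|S_\tau|}\int_{|h|=\tau}f(x+mh)\,dh=V_{|m|\tau}f(x)$, the symmetry $\binom{2\ell}{\ell+m}=\binom{2\ell}{\ell-m}$, and the definition (\ref{Eq8.3}) of $V_{\ell,\tau}$) gives the identity
\[
\frac1{|S_\tau|}\int_{|h|=\tau}\widetilde\Delta_h^{2\ell}f\,dh=(-1)^{\ell+1}\binom{2\ell}{\ell}\bigl(V_{\ell,\tau}f-f\bigr).
\]
Hence, by Minkowski's integral inequality and the power-mean inequality ($s\ge1$),
\[
\sigma^{2\ell}(f,\tau)_B\ \ge\ \frac1{|S_\tau|}\int_{|h|=\tau}\|\widetilde\Delta_h^{2\ell}f\|_B\,dh\ \ge\ \binom{2\ell}{\ell}\,\|V_{\ell,\tau}f-f\|_B\ \ge\ C_0\,K_{\Delta^\ell}(f,\tau^{2\ell})_B,
\]
the last inequality being the lower half of the equivalence (\ref{Eq8.2}) of Theorem~\ref{Thm8.1}. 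Taking $\tau=2^jt$ and inserting this into the previous bound for $\omega^r(f,t)_B^s$ gives (\ref{Eq8.9}).

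I expect the only genuinely non-routine point to be this last passage from $\sigma^{2\ell}$ to the $K$-functional: one must notice that the rotational average of the (centred) $2\ell$-th difference is exactly, up to the normalising factor $\binom{2\ell}{\ell}$, the operator $I-V_{\ell,\tau}$ whose norm Theorem~\ref{Thm8.1} identifies with $K_{\Delta^\ell}(f,\tau^{2\ell})_B$. The rest is bookkeeping with Theorem~\ref{Thm2.1}, the scaling invariance of surface measure on spheres, and convexity.
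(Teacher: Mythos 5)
Your proposal is correct and follows essentially the same route as the paper: average the $s$-th power of the modulus over the sphere $\{|h|=t\}$, apply Theorem~\ref{Thm2.1} directionally with $T=T_h$, centre the $2\ell$-th difference and recognize its sphere-average as $\binom{2\ell}{\ell}(I-V_{\ell,\tau})$, then invoke Theorem~\ref{Thm8.1}. The only (cosmetic) difference is the passage from the $(r+1)$-th to the $2\ell$-th difference: you iterate the Marchaud chain $\sigma^k\ge m_1\sigma^{k+1}$ via Theorem~\ref{Thm2.1}, whereas the paper simply uses the elementary pointwise bound $\|\Delta_h^{r+1}f\|_B\ge 2^{-(2\ell-r-1)}\|\Delta_h^{2\ell}f\|_B$, which needs no convexity hypothesis.
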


\begin{proof}
We write
$$
\omega  ^r(f,t)^s_B = \us{\vert  u\vert  \le t}\sup\;\Vert  \Delta
^r_u f\Vert  ^s_B \ge \us{\vert  u\vert  =t}\sup\;\Vert  \Delta
^r_u f\Vert  ^s_B\ge \frac{1}{m_t}\;\int_{\vert  u\vert  =t} \Vert
\Delta  ^r_u f\Vert  ^s_B du
$$
with $m_t$ of (\ref{Eq8.1}) i.e. $\int_{\vert  u\vert =t}du =
m_t\,.$  We now use Theorem \ref{Thm2.1} with $T=T(u)$ and $T(u)f(x)
= f(x+u)$ to obtain
$$
\begin{aligned}
\omega  ^r(f,t)^s_B
&\ge \frac{C}{m_t} \; \int_{\vert  u\vert  =t} \;\sum^L_{j=1}
2^{-jrs}\Vert  \Delta  ^{r+1}_{2^ju}f\Vert  ^s_B du\\
&\ge \frac{C_1}{m_t} \;\int_{\vert  u\vert  =
t}\;\sum^L_{j=1}\,2^{-jrs}\Vert  \Delta  ^{2\ell}_{2^ju} f\Vert
^s_B du\\
&= C_1\;\sum^L_{j=1} \, 2^{-jrs} \; \frac{1}{m_t 2^{j(d-1)}}
\;\int_{\vert  v\vert  =2^jt}\;\Vert  \Delta  ^{2\ell}_v f\Vert
^s_B dv.
\end{aligned}
$$
As translations are isometries (see (\ref{Eq1.3})), we have
$$
\Vert  \Delta  ^{2\ell}_v f(\,\cdot\,)\Vert  _B = \Big\Vert
\sum^\ell_{k=-\ell} (-1)^k \begin{binom}{2\ell}{\ell-k}\end{binom}
f(\,\cdot\,+kv)\Big\Vert  _B\,.
$$
Therefore, using the H\"older and the triangle inequality we have
$$
\omega  ^r(f,t)^s_B \ge C_1\;\sum^L_{j=1} 2^{-jrs} \Big\Vert
\frac{1}{m_t 2^{j(d-1)}} \, \int_{\vert  v\vert  =2^jt}
\;\sum^\ell_{k=-\ell}\,(-1)^k\begin{binom}{2\ell}{\ell-k}\end{binom}
f(\,\cdot\,+kv)dv\Big\Vert  ^s_B\,.
$$
Since $\int_{\vert  v\vert  =2^jt}dv = m_t 2^{j(d-1)},$ we now have
(using Theorem \ref{Thm8.1})
$$
\begin{aligned}
\omega  ^r(f,t)^s_B &\ge C_1\,\sum^L_{j=1}
2^{-jrs}\begin{binom}{2\ell}{\ell}\end{binom} \Vert  V_{\ell,2^jt}
f-f\Vert  ^s_B\\
&\ge C_2 \,\sum^L_{j=1} 2^{-jrs} K_{\Delta
^\ell}\big(f,(2^jt)^{2\ell}\big)^s_B\,.
\end{aligned}
$$
\vskip-.3in
\end{proof}

The sharp-Jackson result can now be deduced from Theorem
\ref{Thm8.2}.

\begin{theorem}\label{Thm8.3}
Suppose $B$ is a Banach space of functions on $\IR^d$ or $\IT^d$
satisfying {\rm (\ref{Eq1.2})} and
{\rm (\ref{Eq1.3})}.
  Then
\begin{equation}\label{Eq8.10}
\omega  ^r(f,t)_B \ge C\Big\{\sum^\infty_{j=1} 2^{-jrs}
E_{1/(t2^j)}(f)^s_B\Big\}^{1/s}
\end{equation}
where $E_\lambda  (f)_B$ is given in {\rm (\ref{Eq4.10})}  when
$B$ is a space of functions on $\IR^d$ and by
\begin{equation}\label{Eq8.11}
E_\lambda  (f)_B =\,\inf\,\Big\{\Vert  f-\varphi  \Vert  _B :\,
\varphi  (\bx) = \sum_{\vert  \pmb{n}\vert  <\lambda  } a_{\pmb n} e^{i{\pmb
n}\bx}\Big\}
\end{equation}
when $B$ is a space of functions on $\IT^d.$
\end{theorem}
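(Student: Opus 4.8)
The plan is to obtain~\eqref{Eq8.10} from Theorem~\ref{Thm8.2} by inserting a Jackson-type estimate that bounds $E_\lambda(f)_B$ from below by a $K$-functional of the Laplacian. Fix an integer $\ell$ with $2\ell>r$. Theorem~\ref{Thm8.2} gives
\[
\omega^r(f,t)_B\ge C\Big\{\sum_{j=1}^\infty 2^{-jrs}K_{\Delta^\ell}\big(f,(2^jt)^{2\ell}\big)^s_B\Big\}^{1/s},
\]
so it will suffice to prove that for every $\lambda>0$
\begin{equation}\label{EqJack}
E_\lambda(f)_B\le C\,K_{\Delta^\ell}\big(f,\lambda^{-2\ell}\big)_B ,
\end{equation}
where $E_\lambda(f)_B$ is~\eqref{Eq4.10} in the $\IR^d$ case and~\eqref{Eq8.11} in the $\IT^d$ case. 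Once~\eqref{EqJack} is available, taking $\lambda=1/(t2^j)$, so that $\lambda^{-2\ell}=(2^jt)^{2\ell}$, and substituting into the displayed inequality immediately yields~\eqref{Eq8.10}.

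For a Banach space $B$ of functions on $\IR^d$, inequality~\eqref{EqJack} is precisely~\eqref{Eq4.12} with $r$ replaced by $\ell$, and this has already been established in the proof of Theorem~\ref{Thm4.2}: the argument needs only the linear convolution operators $R_{\lambda,\ell,b}$ of~\cite{Da-Di04} together with the mapping properties~\eqref{Eq4.13}, and the reduction of a $B$-norm estimate to a $C(\IR^d)$-norm estimate by convolving with $g\in B^*$, $\Vert g\Vert_{B^*}=1$, and evaluating at the origin. So in this case there is nothing new to do.

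For a Banach space $B$ of functions on $\IT^d$ I would run the same scheme in the periodic setting. First I would choose a trigonometric-polynomial kernel $\widetilde R_{\lambda,\ell}$, a de~la Vall\'ee Poussin--type mean whose Fourier coefficients equal $1$ for $\vert\pmb n\vert\le\lambda/2$ and vanish for $\vert\pmb n\vert\ge\lambda$, built so as to reproduce polynomials of the Laplacian up to order $\ell$, and verify the periodic analogue of~\eqref{Eq4.13}, namely $\Vert\widetilde R_{\lambda,\ell}f\Vert_{C(\IT^d)}\le C_1\Vert f\Vert_{C(\IT^d)}$ and $\Vert\widetilde R_{\lambda,\ell}g-g\Vert_{C(\IT^d)}\le C_2\lambda^{-2\ell}\Vert\Delta^\ell g\Vert_{C(\IT^d)}$. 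Then I would transfer these two bounds from $C(\IT^d)$ to $B$ exactly as in the proof of Theorem~\ref{Thm8.1}: for $g\in B^*$ with $\Vert g\Vert_{B^*}=1$ put $F=f*g\in C(\IT^d)$, use the convolution structure of $\widetilde R_{\lambda,\ell}$ and the point-evaluation estimate $\Vert F-\widetilde R_{\lambda,\ell}F\Vert_{C(\IT^d)}\ge\vert F(0)-\widetilde R_{\lambda,\ell}F(0)\vert\ge\Vert f-\widetilde R_{\lambda,\ell}f\Vert_B-\varepsilon$. Since $\widetilde R_{\lambda,\ell}f$ is a trigonometric polynomial with frequencies $\vert\pmb n\vert<\lambda$, this gives $E_\lambda(f)_B\le C\Vert f-\widetilde R_{\lambda,\ell}f\Vert_B$, and combining this with the approximation property of $\widetilde R_{\lambda,\ell}$ applied to a near-optimal $g$ for $K_{\Delta^\ell}(f,\lambda^{-2\ell})_B$ produces~\eqref{EqJack}.

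The routine parts are the construction of the kernel $\widetilde R_{\lambda,\ell}$ and the verification of its mapping properties, which are standard, and the passage from the sup norm back to $B$, which is carried out verbatim as in Theorems~\ref{Thm4.2} and~\ref{Thm8.1}. The one mildly delicate point I would watch is that the periodic kernel be chosen so that, like its $\IR^d$ counterpart, it is simultaneously bounded on $C(\IT^d)$, reproduces low frequencies well enough that $\Vert\widetilde R_{\lambda,\ell}g-g\Vert\le C\lambda^{-2\ell}\Vert\Delta^\ell g\Vert$, and has output of degree $<\lambda$; beyond this bookkeeping I do not expect any genuine obstacle.
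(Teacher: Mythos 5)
Your proof is correct and takes essentially the same route as the paper: combine Theorem~\ref{Thm8.2} with the Jackson-type estimate $E_\lambda(f)_B\le CK_{\Delta^\ell}(f,\lambda^{-2\ell})_B$, which for $\IR^d$ is precisely~\eqref{Eq4.12} (extended to spaces satisfying~\eqref{Eq1.3} as argued in the proof of Theorem~\ref{Thm4.2}), and for $\IT^d$ is its periodic analogue obtained from a de~la~Vall\'ee~Poussin type kernel and the same duality reduction to $C(\IT^d)$. The paper's own write-up is terse (it additionally records a telescoping expansion of $f$ in near-best approximants whose role is left implicit, and some of the internal cross-references appear garbled), but the core logic is exactly your step (1)--(3); your version is in fact somewhat more explicit about what must be checked in the $\IT^d$ case, and that is a feature, not a gap.
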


\begin{proof}
When $E_\lambda  (f)_B$ is given by (\ref{Eq4.10}), we use
(\ref{Eq4.12}) to deduce (\ref{Eq9.4}) from (\ref{Eq8.3}), writing
$$
f = f-\varphi _ {1/t} + (\varphi  _{1/2t} - \varphi  _{1/t})+ \cdots
+ (\varphi  _{1/2^it} - \varphi  _{1/2^{i-1}t}) + \varphi  _{1/2^it}
$$
where $\varphi  _\lambda  $ is a near best approximant i.e. $\Vert
f  -\varphi  _\lambda  \Vert  _B\le aE_\lambda  (f)_B\,.$
When $E_\lambda  $ is given by (\ref{Eq8.11}), we use the analogue
of (\ref{Eq4.12}) and the same expansion to obtain (\ref{Eq8.10}).
\end{proof}

The lower estimate of $\omega  ^r(f,t)_B$ is given in the following
theorem.

\begin{theorem}\label{Thm8.4}
Under the conditions of Theorem {\rm \ref{Thm8.3}}, we have
\begin{equation}\label{Eq8.12}
\omega  ^r(f,t)^s_B  \ge C_1\,\sum^L_{j=1}
2^{-jrs} \omega  ^{r+1}(f,t2^j)^s_B
\end{equation}
{where}$ L= \;\min(\ell:\, 2^{-\ell} \le
t)$
and $B$ is a space of functions on $\IT^d$ or $\IR^d.$
\end{theorem}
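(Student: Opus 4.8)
I would first observe that (\ref{Eq8.12}) is, up to a harmless reindexing, the sharp Marchaud inequality (\ref{Eq1.4}): taking $t=2^{-n}$ (so that $L=n$) and substituting $k=n-j$, inequality (\ref{Eq8.12}) becomes
$$
2^{nrs}\,\omega^r(f,2^{-n})_B^s\ \ge\ C_1\sum_{k=0}^{n-1}2^{krs}\,\omega^{r+1}(f,2^{-k})_B^s ,
$$
and the monotonicity of $\omega^{r+1}(f,\cdot)_B$ together with $\omega^{r+1}(f,2\delta)_B\le 3^{r+1}\omega^{r+1}(f,\delta)_B$ lets one shift the summation index between this sum and $\sum_{j=1}^n2^{jrs}\omega^{r+1}(f,2^{-j})_B^s$. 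Hence it suffices to prove (\ref{Eq8.12}) for $t=2^{-n}$; the passage to general $t$ is immediate from $2^{-L}\le t<2^{-L+1}$, $\omega^r(f,t)_B\ge\omega^r(f,2^{-L})_B$, and $\omega^{r+1}(f,t2^j)_B\le 3^{r+1}\omega^{r+1}(f,2^{j-L})_B$. I would then derive the dyadic inequality from Theorem~\ref{Thm8.3} and a weak-converse (Bernstein) estimate, combined by a discrete Hardy inequality.

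The weak-converse estimate I need is
$$
\omega^{r+1}(f,2^{-k})_B\ \le\ C\,2^{-(r+1)k}\sum_{m\le k}2^{(r+1)m}E_{2^m}(f)_B ,
$$
which rests on the Bernstein inequality: if $\varphi\in B$ has Fourier transform supported in $\{|y|\le\lambda\}$ (or, on $\IT^d$, is a trigonometric polynomial of degree $<\lambda$), then $\omega^{r+1}(\varphi,\delta)_B\le C(\lambda\delta)^{r+1}\Vert\varphi\Vert_B$. This follows from (\ref{Eq1.3}) alone, by the duality argument already used for (\ref{Eq4.13}) in the proof of Theorem~\ref{Thm4.2}: any directional derivative $\partial_\xi\varphi$ is the convolution of $\varphi$ with a fixed kernel of $L_1$-norm $O(\lambda)$, hence $\Vert\partial_\xi^{r+1}\varphi\Vert_B\le(C\lambda)^{r+1}\Vert\varphi\Vert_B$, and $\Delta^{r+1}_h\varphi$ is an average of translates of $\partial_h^{r+1}\varphi$. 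Telescoping near-best approximants then gives the estimate: for each $m\in\Z$ choose $P_m$ from the appropriate band-limited subspace with $\Vert f-P_m\Vert_B\le 2E_{2^m}(f)_B$ (so $\Vert P_m-P_{m-1}\Vert_B\le 4E_{2^{m-1}}(f)_B$); since $\Vert f-P_N\Vert_B\to 0$ as $N\to\infty$ and, on $\IR^d$, $P_m\to 0$ in $B$ as $m\to-\infty$ (analogously on $\IT^d$), applying the Bernstein inequality to each $P_m-P_{m-1}$ and summing yields the displayed bound, the series converging because $E_{2^m}(f)_B\le\Vert f\Vert_B$ and the coarse-scale tail is geometric.

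To conclude, note that $2^{krs}(2^{-(r+1)k})^s=2^{-ks}$, so
$$
\sum_{k=0}^{n-1}2^{krs}\,\omega^{r+1}(f,2^{-k})_B^s\ \le\ C\sum_{k=0}^{n-1}2^{-ks}\Big(\sum_{m\le k}2^{(r+1)m}E_{2^m}(f)_B\Big)^s .
$$
A weighted H\"older estimate on the inner sum (with $a_m=2^{(r+1)m}E_{2^m}(f)_B$ and a small $\varepsilon\in(0,1)$, using $\sum_{m\le k}a_m\le C_\varepsilon\big(2^{\varepsilon ks}\sum_{m\le k}2^{-\varepsilon ms}a_m^s\big)^{1/s}$) followed by interchanging the order of summation bounds the right-hand side by $C\sum_{m\le n-1}2^{mrs}E_{2^m}(f)_B^s$; the terms with $m<0$ are controlled because $2^{-|m|((r+1)-\varepsilon)s}\le 2^{-|m|rs}$. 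Finally, Theorem~\ref{Thm8.3} applied with $t=2^{-n}$, after the substitution $m=n-j$ in its right-hand side, is precisely $\sum_{m\le n-1}2^{mrs}E_{2^m}(f)_B^s\le C\,2^{nrs}\,\omega^r(f,2^{-n})_B^s$, so combining the two displays and reindexing $j=n-k$ gives (\ref{Eq8.12}) for $t=2^{-n}$. The step I expect to require the most care is the Bernstein inequality in the stated generality (weighted $L_p$ and Orlicz spaces on $\IR^d$ or $\IT^d$) and the bookkeeping of the coarse-scale terms $m\to-\infty$; it is crucial here that the sum in Theorem~\ref{Thm8.3} extends to $+\infty$, as that is exactly what absorbs those terms.
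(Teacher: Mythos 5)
Your route is genuinely different from the paper's in the final, delicate step. Both arguments reduce to $t=2^{-n}$, both pass through a weak converse (Jackson/Bernstein telescoping) inequality and a discrete Hardy-type rearrangement, and both invoke Theorem~\ref{Thm8.3}. The difference is how the coarse-scale contribution is handled. The paper uses the standard weak converse \eqref{Eq8.13}, which carries an explicit $\Vert f\Vert_B$ term; after the Hardy step this produces inequality \eqref{Eq9.13}, which is \eqref{Eq8.12} with an extra $t^{rs}\Vert f\Vert_B^s$ on the left. Removing that term takes a separate, rather intricate argument: the paper picks a near-best band-limited (or, on $\IT^d$, constant) approximant $g$ with $\Vert f-g\Vert_B=E_1(f)_B$, applies \eqref{Eq9.13} to $f-g$, and controls the residual sum $\sum_j2^{-jrs}\omega^{r+1}(g,t2^j)^s_B$ via the smoothness of $g$, a Nikolskii-type bound from \cite{Da-Di04}, and the Landau--Kolmogorov inequality of \cite{Di89}. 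You instead sharpen the weak converse itself, dropping the $\Vert f\Vert_B$ term by letting the telescope of near-best approximants run to $m\to-\infty$, and then let the full infinite sum in Theorem~\ref{Thm8.3} absorb the coarse scales. If it works, this is cleaner and avoids the $g$-splitting and the Landau--Kolmogorov machinery entirely.

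However, as written there is a genuine gap precisely at the point you flag as needing "care." The assertion that the near-best approximants $P_m$ tend to $0$ in $B$ as $m\to-\infty$ (on $\IR^d$) is not automatic and is not established. A near-best $P_m$ has $\Vert P_m\Vert_B\le 3\Vert f\Vert_B$, but a bounded sequence of band-limited functions of shrinking bandwidth need not converge to $0$ in norm. Concretely, the clean weak converse
\[
\omega^{r+1}(f,2^{-k})_B\le C\,2^{-(r+1)k}\sum_{m\le k}2^{(r+1)m}E_{2^m}(f)_B
\]
is not a consequence of the standard one \eqref{Eq8.13}: the tail $\sum_{m<0}2^{(r+1)m}E_{2^m}(f)_B$ is always $\le c\,\Vert f\Vert_B$ but can be much smaller than $\Vert f\Vert_B$ when, say, $E_1(f)_B=0$, so the $\Vert f\Vert_B$ term in \eqref{Eq8.13} is not trivially dominated by the added tail. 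What actually makes your weak converse true is the following: one must show $\lim_{m\to-\infty}E_{2^m}(f)_B=\Vert f\Vert_B$ for $f\ne 0$, which then lets one choose $P_m=0$ for all $m\le m_0$ (where $E_{2^{m_0}}(f)_B\ge\tfrac12\Vert f\Vert_B$), and the telescope becomes finite with a benign boundary term $\Vert P_{m_0+1}\Vert_B\le 4E_{2^{m_0}}(f)_B$. Establishing $E_{2^m}(f)_B\to\Vert f\Vert_B$ on $\IR^d$ uses weak compactness and hence reflexivity of $B$; reflexivity does follow from \eqref{Eq1.2} (as noted in the proof of Theorem~\ref{Thm3.1} via uniform convexity and \cite[Prop.~1.e.3]{Li-Tz}), but you should invoke this explicitly, since it is essential and not a soft fact about "band-limited functions spreading out." On $\IT^d$ the analogous step is easier: for $m\le 0$ one may take $P_m$ to be the best constant, $P_m-P_{m-1}=0$ there, and $\omega^{r+1}(\mathrm{const},\cdot)_B=0$. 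Once this reflexivity-based justification is added, your Hardy/rearrangement computation and the final substitution against Theorem~\ref{Thm8.3} are correct, and the proof goes through.
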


\begin{proof}
Since when $2^{-\ell}\le t<2^{-\ell+1},$\; $\omega  ^k(f,2^{-\ell}) \le
\omega  ^k(f,t)_B \le \omega  ^k(f,2^{-\ell+1})_B \le 2^k\omega
^k(f,2^{-\ell})_B,$ it is sufficient to prove (\ref{Eq8.12})
 for $t=2^{-n}$ and $L=n.$  For a Banach space of
functions on $\IR^d$ or $\IT^d$ satisfying (\ref{Eq1.3}), the weak
converse inequality yields
\begin{equation}\label{Eq8.13}
\omega  ^{r+1}(f,2^{-n+j})_B\le C_2\Big\{\sum^{n-j}_{k=0} \,
2^{-k(r+1)} E_{2^{n-j-k}}(f)_B + \frac{1}{2^{(n-j)(r+1)}} \,\Vert
f\Vert  _B\Big\}\,.
\end{equation}
Therefore, recalling $2\le s<\infty  ,$ we have
$$
\begin{aligned}
\sum^n_{j=1} 2^{-jrs} \omega  ^{r+1}(f,2^{-n+j})^s_B
&\le C^s_2\Big\{\sum^n_{j=1} 2^{-jrs}\Big(\sum^{n-j}_{k=0}
2^{-k(r+1)}E_{2^{n-j-k}}(f)_B
 + \frac{1}{2^{(n-j)(r+1)}}\,\Vert  f\Vert  _B\Big)^s\Big\}\\
&\le C_3 \Big\{\sum^n_{j=1} 2^{-jrs}\Big(\sum^{n-j}_{k=0}
2^{-ks(r+1)}E_{2^{n-j-k}}(f)^s_B\Big)\\
&\q +
\sum^n_{j=1} 2^{-jrs} 2^{-(n-j)(r+1)s}\Vert  f\Vert
^s_B\Big\}\\
&\le C_3\Big[\sum^n_{j=1}
\,2^{-jrs}\,\sum^{n-j}_{m=0}\,2^{-(n-j-m)s(r+1)}E_{2^m}(f)^s_B\Big]
+ C_3 2^{-nrs}\Vert  f\Vert  ^s_B\\
&= C_3\Big[\sum^n_{m=0} E_{2^m}(f)^s_B
2^{-(n-m)s(r+1)}\,\sum^{n-m}_{j=1} 2^{js}\Big] + C_3 2^{-nrs}\Vert
f\Vert  ^s_B\\
&\le C_4\,\sum^n_{m=1} E_{2^m}(f)^s_B 2^{-(n-m)sr} + C_3
2^{-nrs}\Vert  f\Vert  ^s_B\,.
\end{aligned}
$$

In view of (\ref{Eq8.10}) (for $t=2^{-n}),$ we have
\begin{equation}\label{Eq9.13}
\omega  ^r(f,t)^s_B + t^{rs}\Vert  f\Vert  ^s_B \ge C_5 \sum^L_{j=1}
2^{-jrs} \omega  ^{r+1}(f,t2^j)^s_B\,.
\end{equation}

We choose $g$ so that $\Vert  f-g\Vert  _B = E_1(f)_B$ where
$E_\lambda  (f)_B$ is given in \eqref{Eq4.10} and \eqref{Eq8.13} for
function spaces on $\IR^d$ or $\IT^d$ respectively. Using
\eqref{Eq9.13}, we now write
$$
\begin{aligned}
\sum^L_{j=1} 2^{-jrs} \omega  ^{r+1} (f-g,t2^j)^s_B
&< C^{-1}_5 \{\omega  ^r(f-g,t)^s_B + t^{rs} \Vert  f-g)\Vert^s  _B\}\\
&\le C_6 \omega  ^r(f,t)_B
\end{aligned}
$$
since $\Vert  f-g\Vert  _B + C\omega  ^r(f,1)_B \le C_1 t^{-r}\omega
 ^r(f,t)_B$ (see \cite[Th.2.1]{Di99}).

For $g\in C^r$ one has
\begin{equation}\label{Eq9.13a}
\omega^r(g,\tau)_B\le\tau^r\max_\xi\Big\|\Big(\frac\partial{\partial\xi}\Big)^rg\Big\|_B.
\end{equation}
This follows from $\displaystyle \|\Delta_h^r g\|_\infty\le |h|^r \Big\|\Big(\frac\partial{\partial\xi}\Big)^rg\Big\|_\infty$ for $h$ in the $\xi$ direction and hence following the arguments used in Theorem~\ref{Thm8.1} (and elsewhere), $\displaystyle \|\Delta_h^r g\|_B\le |h|^r \Big\|\Big(\frac\partial{\partial\xi}\Big)^rg\Big\|_B$ (with $h$ still in the $\xi$ direction). Using~\eqref{Eq1.5} we now have~\eqref{Eq9.13a}.

Therefore, as $g\in C^\infty  $ and $2^{-L}\approx t,$ we have
$$
\begin{aligned}
\sum^L_{j=1} 2^{-jrs}\omega  ^{r+1}(g,t2^j)^s_B
&\le \sum^L_{j=1} 2^{-jrs}(t2^j)^{(r+1)s}\, \us\xi  \max\, \Big\Vert
\Big(\pd{}{\xi  }\Big)^{r+1}g\Big\Vert  ^s_B\\
&\le C_7 t^{rs} \,\us{\vert  \xi\vert  =1} \max\, \Big\Vert  \Big(\pd{}{\xi
}\Big)^{r+1}g\Big\Vert  ^s_B\,.
\end{aligned}
$$
For function spaces on $\IT^d,$ $\big\Vert  \big(\pd{}{\xi
}\big)^{r+1}g\big\Vert  ^s_B =0$. For function spaces on $\IR^d$ we note
that $\supp \, \hat g(y)\subset\{y:|y|\le1\}$ implies
$\supp \, \widehat{(\frac\partial{\partial\xi})^rg}(y)\subset\{y:|y|\le1\}$ and using~\cite[Th.~2.1]{Da-Di04} with
$R=1$ and $\ell=1$, we have
\[
\Big\|\Delta\Big(\frac\partial{\partial\xi}\Big)^rg\Big\|_\infty
\le C \Big\|\Big(\frac\partial{\partial\xi}\Big)^rg\Big\|_\infty
\]
and hence
\[
\Big\|\Delta\Big(\frac\partial{\partial\xi}\Big)^rg\Big\|_B
\le C \Big\|\Big(\frac\partial{\partial\xi}\Big)^rg\Big\|_B.
\]
We now use~\cite[Th.~6.2]{Di89} to obtain
\[
\Big\|\Big(\frac\partial{\partial\xi}\Big)^{r+1}g\Big\|_B
\le C_8 \Big\|\Delta\Big(\frac\partial{\partial\xi}\Big)^rg\Big\|_B^{1/2}
\Big\|\Big(\frac\partial{\partial\xi}\Big)^rg\Big\|_B^{1/2}
\le C_9 \Big\|\Big(\frac\partial{\partial\xi}\Big)^rg\Big\|_B.
\]
Therefore,
$$
t^{rs}\,\us{\vert  \xi  \vert  =1}\max\,\Big\Vert  \Big(\pd{}{\xi  }\Big)^{r+1}g\Big\Vert  ^s_B
\le t^{rs}\,\us{\vert  \xi  \vert  =1}\max\,\Big\Vert  \Big(\pd{}{\xi  }\Big)^r g\Big\Vert
^s_B
\le C_{10}\, t^{rs} \omega  ^r(f,1)^s_B \le C_{11}\,\omega  ^r(f,t)^s_B.
$$
\end{proof}

We thank F. Dai for some valuable comments and for showing that the
second term on the left of \eqref{Eq9.13} is redundant not only for
function spaces on $\IT^d.$

\end{document}